   \def\DD{{\mathbb D}}
   \def\HH{{\mathbb H}}
 \def\RR{{\mathbb R}}  \def\TT{{\mathbb T}}
 \def\ZZ{{\mathbb Z}}
  \def\cG{\mathcal{G}}  
\def\cB{\mathcal{B}}  \def\cH{\mathcal{H}}  \def\cT{\mathcal{T}}
\def\cC{\mathcal{C}}  \def\cI{\mathcal{I}} \def\cO{\mathcal{O}} 
\def\cD{\mathcal{D}}    
\def\cF{\mathcal{F}}  \def\cL{\mathcal{L}}
\newtheorem*{teo*}{Theorem}
\newtheorem{teo}{Theorem}[section]
\newtheorem{quest}{Question}
\newtheorem{lema}[teo]{Lemma}
\newtheorem{prop}[teo]{Proposition}
\newcommand{\bi}{\begin{itemize}}
\newcommand{\ei}{\end{itemize}}
\theoremstyle{definition}
\newtheorem{defi}[teo]{Definition}
\theoremstyle{remark}
\newtheorem{obs}[teo]{Remark}
\newtheorem{ej}[]{Example}
\newcommand{\eps}{\varepsilon}
\newcommand{\wfws}{\widetilde{\cF^{ws}}}
\newcommand{\wfwu}{\widetilde{\cF^{wu}}}
\newcommand{\mt}{\widetilde{M}}
\newcommand{\comment}[1]{}
\author[R. Potrie]{Rafael Potrie}
\address{CMAT, Facultad de Ciencias, Universidad de la Rep\'ublica, Uruguay and CNRS, IRL-IFUMI}
\urladdr{www.cmat.edu.uy/$\sim$rpotrie}
\email{rpotrie@cmat.edu.uy}
\title[Anosov flows in dimension 3]{Anosov flows in dimension 3: an outside look}
\thanks{ R.P. was partially supported by CSIC, ANII.}
\begin{document}

\begin{abstract}
These notes were intended as support material for a minicourse on Anosov flows in the conference "Symplectic geometry and Anosov flows'' which took place in Heidelberg in July 2024 organized by Peter Albers, Jonathan Bowden and Agust\'in Moreno. I took the invitation to present the subject as asking from an outsider view of the subject, given the fact that my research uses both ideas and results from the theory of Anosov flows. The point of view of the course is to provide an overview of the main results and questions in the subject, with emphasis on the interaction with topology, geometry, specially symplectic geometry and contact aspects of the theory. Some detail is given in the presentation of the Barbot-Fenley theory of leaf spaces. Hopefully the notes will contribute in gaining a working knowledge of the theory and its many beautiful connections. 


\bigskip

\noindent
{\bf Keywords: } Anosov flows, 3-manifolds, foliations. 

\medskip

\noindent {\bf MSC 2000:} 37C86, 57K30 ,   37D20, 57R30, 37E30
\end{abstract}

\maketitle

\section{Introduction}

Anosov flows in 3-manifolds have a long and rich history. Motivated by the successful study of geodesic flows in negative curvature, Anosov and Sinai proposed a general definition that, by that time, included also some suspension flows. An appendix to that work, by Margulis \cite{Margulis} provided the first topological obstructions to the existence of Anosov flows. Haefliger and Novikov's work on foliations allowed also to produce some obstructions and this was exploited by Plante and Thurston \cite{PlanteThurston}. We refer to \cite{PotrieNot} for more on this, as we shall concentrate on what came next. 

Several new examples started to appear \cite{FW, HandelThurston, Goodman} which allowed for predicting an even deeper connection with topology and foliations. In \cite{Ghys}  the first classification result was proved, showing that in circle bundles, all Anosov flows are orbit equivalent to finite covers of geodesic flows in negative curvature. These notes will concentrate on the theory which started developing in the 90's, with the two influential papers \cite{Barbot-leaf,Fenley} which started to develop a systematic study of the foliations associated to Anosov flows. In a sense, this theory has been somewhat distanced from other work on foliations, and we hope to concentrate here on some of these relations, both having Anosov foliations as important examples of foliations, but also to use the theory of foliations to understand Anosov foliations. Recently, some connections have been made with contact structures, since pairs of transverse foliations give rise to bi-contact structures, and we hope that the introduction here can be useful for people interested in these connections. Naturally, these notes contain more material than can be reasonably covered in 3 lectures, but I hope this can be useful to complement the lectures. 

The plan of the notes is as follows: 

\begin{itemize}
\item In \S~\ref{s.definitions} we give precise definitions and state the basic results that will be the starting point of our study. 

\item In \S~\ref{s.topoobs} we briefly explain some classical topological obstructions to the existence of Anosov flows in some 3-manifolds. 

\item In \S~\ref{s.barbotfenley} we review the Barbot-Fenley theory of leaf spaces and obtain some of the main basic results on the interaction between leaf spaces, bi-foliated plane actions, etc. 

\item In  \S~\ref{s.geomleaves}  we study the geometric properties of leaves of the foliations and how the orbits sit inside these leaves. 

\item In \S \ref{s.universal} we study the structure at infinity, producing some compactifications and circles at infinity. 

\item \S \ref{s.examples}  is devoted to a (very brief) presentation of examples and constructions, trying to relate to some of the aspects that have been discussed. 

\item In \S \ref{s.contact} we discuss the contact property and its relation with the leaf spaces. 

\item Finally, in \S \ref{s.otherthings} we discuss some other directions that we have neglected, being aware that there are still many others. 
\end{itemize}

Throughout we pose many questions, many of which may be quite naive, but that we expect can provide interesting lines for further research. 

\medskip

{\small \emph{Acknowledgements:}  I take the opportunity to thank the organizers of the event for the opportunity to discuss this subject. I'd like to thank Christian Bonatti, Elena Gomes and Th\'eo Marty for letting me use some of their figures. Discussions over the years with  Thierry Barbot, Thomas Barthelm\'e, Christian Bonatti, Sergio Fenley, Elena Gomes, Katie Mann, Santiago Martinchich and Mario Shannon were important to understand some points and to shape my point of view on the subject, which is still work in progress. During the conference I had the luck to discuss and learn some aspects and points of view from Jonathan Bowden, Kai Cieliebak, Pierre Dehornoy, Anna Florio, Surena Hozoori, Th\'eo Marty, Thomas Massoni, Agust\'in Moreno, Federico Salmoraghi  besides the people mentioned before and this had a positive impact on this notes. Finally, I thank the referee for a careful read and many relevant suggestions.}


\section{Definitions and basic properties}\label{s.definitions}
We will provide definitions in any dimensions, but quickly specialize to dimension 3 where most of the theory is developed. We will state many classical results of the theory without giving specific references, we refer the reader to \cite{Barthelme} for a more careful presentation. 

\subsection{Classical definition} 
Let $M$ be a closed smooth manifold. We say that (smooth)\footnote{We will not intend to work in the most general setting, one can indeed define this for less regular vector fields.} a vector field $X$ in $M$ is an \emph{Anosov vector field} if denoting $X_t$ the flow generated by $X$ it holds that there is a $DX_t$-invariant (continuous) splitting $TM = E^s \oplus \RR X \oplus E^u$ satisfying that there is $T>0$ so that for vectors $v^s \in E^s$ and $v^u \in E^u$ one has that: 

\begin{equation}\label{eq:AnosovSmooth}
\|DX_T v^s \| \leq \frac{1}{2} \|v^s\| \text{ and } \|DX_T v^u \| \geq 2 \|v^u\|. 
\end{equation}

\begin{figure}[ht]
\begin{center}
\includegraphics[scale=0.70]{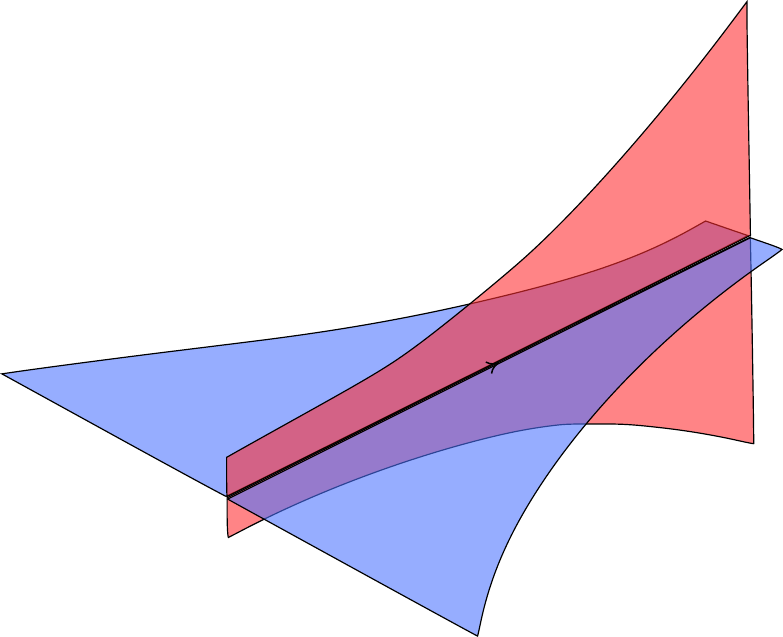}
\begin{picture}(0,0)
\end{picture}
\end{center}
\vspace{-0.5cm}
\caption{{\small Local picture of an Anosov flow. (Figure by Elena Gomes)}\label{f.flujo}}
\end{figure}

Two important examples, which were those which motivated the definition are the following (we leave checking the fact that these are Anosov as a \emph{challenging} exercise): 

\begin{ej}[Suspensions]\label{ej-susp}
Consider a linear map $A \in \mathrm{SL}_d(\ZZ)$ so that all its eigenvalues have modulus different from one. It induces a diffeomorphism $f_A : \TT^d \to \TT^d$ where we identify $\TT^d \cong \RR^d/_{\ZZ^d}$. Consider the manifold $M_A = \TT^d \times \RR /_{\sim}$ where $(x,s) \sim (f_A^k(x), s- k)$ for every $x \in \TT^d, t \in \RR, k \in \ZZ$. The flow $\varphi_t: M_A \to M_A$ given by $\varphi_t([(x,s)]) = [(x,s+t)]$ is Anosov. We call such examples \emph{suspensions}. 
\end{ej}

\begin{ej}[Geodesic flows]\label{ej-geod}
If $M$ is a closed Riemannian manifold with negative sectional curvature everywhere, it was shown by Anosov \cite{Anosov} that for such manifolds, the geodesic flow is Anosov. The proof that such a flow is Anosov simplifies for surfaces (which give rise to Anosov flows in 3-manifolds) and can be further simplified by assuming that the metric has constant negative curvature (when one can identify the unit tangent bundle with a quotient of $\mathrm{PSL}_2(\RR)$). 
\end{ej}

There are many equivalent definitions. One is to say that the time one map of the flow is \emph{partially hyperbolic} which also has several equivalent definitions. We refer to \cite{CP} for several properties and equivalences. 

Here are some useful exercises to test the understanding of the definition: 

\begin{itemize}
\item If $X$ is Anosov, then so is $-X$ with the dimension of the bundles exchanged. 
\item If $\rho : M \to \RR_{>0}$ is a smooth function and $Y= \rho X$ with $X$ being an Anosov vector field, then $Y$ is also an Anosov vector field. The bundles verify $E^s_X \oplus \RR X = E^s_Y \oplus \RR Y$ and $\RR X \oplus E^u_X = \RR Y \oplus E^u_Y$ but it may be that $E^s_X \neq E^s_Y$ and $E^u_X \neq E^u_X$. Try to give a formula for $E^\sigma_Y$ in terms of $E^\sigma_X$ and $\rho$. 
\end{itemize}

There are several reasons that justify using a definition that depends on the differentiable structure and relying on properties that one can detect in the tangent bundle. The first one\footnote{A related reason that motivates studying Anosov flows in dimension 3 is that every $C^1$-robustly transitive flow in a 3-manifold must be an Anosov flow \cite{Doering}.} is that under this definition, the vector fields verifying this property are an open set and there is a finite procedure that certifies that a vector field belongs to this class (we refer the reader to the notion of \emph{cone-fields} expanded in \cite{CP}). In fact, the class is more than open: 

\begin{teo}[Structural stability of Anosov flows]\label{sstability}
The set of Anosov vector fields is open in the $C^1$-topology and any pair of $C^1$-close Anosov vector fields are orbit equivalent. 
\end{teo}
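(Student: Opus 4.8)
The plan is to establish the two assertions separately: first openness of the set of Anosov vector fields in the $C^1$-topology, then the orbit equivalence (structural stability) of $C^1$-nearby Anosov flows. For openness, I would work with cone fields. Given an Anosov vector field $X$ with splitting $TM = E^s \oplus \RR X \oplus E^u$, one constructs continuous families of cones around $E^s \oplus \RR X$ and around $\RR X \oplus E^u$ together with a uniform time $T'$ so that $DX_{T'}$ (or $DX_{-T'}$) strictly contracts the unstable-type cone into its interior and does the reverse for the stable-type cone, with the appropriate expansion/contraction of vectors; this is a consequence of \eqref{eq:AnosovSmooth} together with compactness of $M$. Since cone fields are open conditions and the flow depends continuously on the vector field in the $C^1$-topology (on a fixed compact time interval), any $Y$ sufficiently $C^1$-close to $X$ satisfies the same cone-field estimates. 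Standard cone-field arguments then produce a $DY_t$-invariant splitting $E^s_Y \oplus \RR Y \oplus E^u_Y$ satisfying an inequality of the form \eqref{eq:AnosovSmooth} (with $2$ possibly replaced by a slightly smaller constant after adjusting $T$), so $Y$ is Anosov. The same argument shows that the bundles $E^\sigma_Y$ vary continuously with $Y$ near $X$.

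For the orbit equivalence, the classical route is the Hirsch--Pugh--Shub graph transform / shadowing approach. The key point is that Anosov flows are \emph{expansive} up to time reparametrization: there is $\epsilon_0 > 0$ such that if two orbits stay within $\epsilon_0$ of one another for all time (allowing a continuous monotone time change along one of them) then they coincide. Combined with the shadowing lemma for flows, one argues as follows: fix $X$ Anosov and let $Y$ be $C^1$-close. Every $Y$-orbit is, at the level of $\epsilon$-pseudo-orbits for $X$ (because $X$ and $Y$ are $C^0$-close as vector fields), shadowed by a genuine $X$-orbit, and conversely; expansiveness makes the shadowing orbit essentially unique up to reparametrization. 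One then checks that the assignment is a well-defined bijection $h : M \to M$ carrying $Y$-orbits to $X$-orbits, that $h$ is continuous (using uniqueness of shadowing and continuity of the local stable/unstable manifolds, which themselves vary continuously by the stable manifold theorem), and that $h^{-1}$ is continuous by the symmetric argument. This gives a homeomorphism sending oriented orbits to oriented orbits, i.e.\ an orbit equivalence. One must be slightly careful that $h$ is close to the identity and respects orientations of orbits, which follows from the $C^0$-closeness of $X$ and $Y$ and taking $Y$ close enough.

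The step I expect to be the main obstacle is the construction and control of the orbit-equivalence homeomorphism $h$ — specifically, proving that the shadowing/expansiveness correspondence is genuinely continuous (and has continuous inverse) and respects the flow direction. Openness via cone fields is essentially soft once the cone estimates are set up; the real work is that a priori the shadowing orbit and its reparametrization depend only measurably or semicontinuously on the point, and upgrading this to continuity requires uniformity of the local product structure and of the stable/unstable holonomies under $C^1$-perturbation. An alternative, which sidesteps some of this, is to use the structural stability of the time-one map as a partially hyperbolic (indeed, here, uniformly hyperbolic in the normal directions) system together with the fact that the flow direction is the center, and then invoke the plaque-expansivity / normal hyperbolicity machinery of Hirsch--Pugh--Shub to transport the $X$-flow foliation to the $Y$-flow foliation by a homeomorphism; one still needs to reconcile the leafwise conjugacy with the flow parametrizations to get an honest orbit equivalence. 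Either way, I would present the cone-field openness in detail and then sketch the shadowing-plus-expansivity argument for orbit equivalence, referring to \cite{Barthelme} or the standard references for the technical continuity estimates.
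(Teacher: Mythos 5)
Your outline is correct and is essentially the standard argument: the paper itself does not prove Theorem \ref{sstability} but defers to \cite{FH,CP} (and alludes to the cone-field criterion), and those references prove it exactly along the lines you sketch — openness via invariant cone fields, and orbit equivalence via shadowing/expansivity up to reparametrization or, equivalently, Hirsch--Pugh--Shub leaf conjugacy for the normally hyperbolic, plaque-expansive orbit foliation. The caveats you flag (expansivity only up to time change, continuity of the shadowing correspondence, reconciling leaf conjugacy with flow orientation) are precisely the points where the cited texts do the technical work, so your proposal is consistent with the paper's intended proof.
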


Being orbit equivalent means that there is a homeomorphism of $M$ sending (oriented) orbits of one flow to (oriented) orbits of the other. This also justifies the fact that we chose to work with smooth vector fields: if we want to understand the topological and differential geometric properties of a less regular Anosov flow, we can pick a flow which is $C^1$-close and smooth, and this flow will be orbit equivalent to the original one. Note however that, as mentioned above, the strong bundles $E^s$ and $E^u$ do depend on the parametrization, so, some finer properties (such as mixing, or exponential mixing) are more subtle and rely on the specific choice of parametrizations. We will not delve into these issues. 

The reason the definition is useful is because the geometric structure preserved by the derivative of the flow integrates to actual information in the manifold. For simplicity, we will state the result assuming that the invariant bundles are all orientable, this can be always achieved by taking a finite (four-fold\footnote{A two-fold cover gives that $M$ is orientable, and then, a further two-fold cover orients $E^s$, which is enough since $\RR X$ has a cannonical orientation, so if the weak-stable bundle is orientable and the manifold too, then, so is $E^u$.}) cover. We will assume familiarity with the concept of foliations (see figure \ref{f.fol}, and we refer to \cite{Calegari, CamachoLins,CandelConlon}). 

\begin{teo}[Stable manifold theorem]\label{teo.stableman}
Let $X$ be an Anosov vector field and $X_t$ its associated flow with orientable bundles. Then, the bundles $E^s \oplus \RR X$ and $E^s$ are uniquely integrable into foliations $\cF^{ws}, \cF^{ss}$. Moreover, these foliations verify:
\begin{enumerate}
\item they are $X_t$-invariant, that is $X_t (\cF^\sigma (x)) = \cF^\sigma (X_t(x))$ for $\sigma=ss,ws$),
\item $\cF^{ws}(x) = \bigcup_{t \in \RR} X_t(\cF^{ss}(x)) = \bigcup_{t \in \RR} \cF^{ss}(X_t(x))$, 
\item every leaf of $\cF^{ss}$ is an injectively immersed copy of $E^s$,
\item every leaf of $\cF^{ws}$ is either an injectively immersed copy of $E^s \oplus \RR X$ or an injectively immersed copy of $E^s \times S^1$,
\item a leaf of $\cF^{ws}$ is homeomorphic to  $E^s \times S^1$ only when the leaf contains a periodic orbit of $X_t$ (and this is unique in the leaf). Moreover, the holonomy of the foliation along the periodic orbit (in the direction of the flow) is expanding\footnote{It may be confusing that the holonomy of the weak stable foliation is expanding rather than contracting, but the point it that when going in the flow direction, the transverse direction to the foliation is the unstable, and thus it expands.}. 
\item If $y \in \cF^{ss}(x)$ then it holds that $\lim_{t \to \infty} \frac{1}{t} \log( d(X_t(y),X_t(x)) )< 0$.
\end{enumerate} 
\end{teo}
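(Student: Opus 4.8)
The plan is to establish the stable manifold theorem by combining the standard hyperbolic fixed-point theory (Hadamard--Perron) with the specific flow structure, proving the claims roughly in the listed order since each builds on the previous ones. First I would construct the strong stable foliation $\cF^{ss}$. Locally, near a point $x$, one applies the Hadamard--Perron graph transform to the time-$T$ map $X_T$: the bundle $E^s$ is $DX_T$-contracting, so the usual argument produces a local strong stable manifold $W^{ss}_{loc}(x)$, tangent to $E^s(x)$, consisting of points whose forward $X_{nT}$-orbits approach that of $x$ exponentially, and these local manifolds depend continuously on $x$ and vary smoothly along themselves. Unique integrability of $E^s$ then follows because any curve tangent to $E^s$ must, under $X_T$-iteration, have its length contracted, forcing it to lie in the local strong stable manifold; patching the local manifolds gives the global foliation $\cF^{ss}$, whose leaves are injectively immersed copies of $E^s$ (claim (3)) since a leaf is exhausted by graphs over $E^s(x)$ and self-intersection would contradict the contraction. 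The $X_t$-invariance in claim (1) for $\sigma = ss$ is immediate from the dynamical characterization: $y$ being forward-asymptotic to $x$ is equivalent to $X_t(y)$ being forward-asymptotic to $X_t(x)$, and the fact that one may use any large time rather than only multiples of $T$ is handled by noting $X_t$ for $t \in [0,T]$ distorts distances by a bounded factor.

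Next I would build $\cF^{ws}$ by setting $\cF^{ws}(x) = \bigcup_{t \in \RR} X_t(\cF^{ss}(x))$, which is claim (2); this is a connected injectively immersed submanifold tangent to $E^s \oplus \RR X$ because $E^s$ and $\RR X$ together span it and the flow direction is transverse to each $\cF^{ss}$-leaf. Unique integrability of $E^s \oplus \RR X$ follows from that of $E^s$ together with the unique integrability of the flow line field $\RR X$: a surface tangent to $E^s \oplus \RR X$ is foliated by its intersections with strong stable leaves and by flow lines, and a standard Frobenius-type argument (or direct saturation) shows it must coincide with $\cF^{ws}(x)$ locally. For claim (1) with $\sigma = ws$, $X_t$-invariance is immediate from the definition as a flow-saturation of $\cF^{ss}$. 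To get claims (4) and (5), I would analyze the topology of a $\cF^{ws}$-leaf $L$: the restriction of the flow to $L$ is a nonsingular flow on a surface whose orbit space is $\cF^{ss}(x)$, a line; if no orbit in $L$ is periodic then the flow on $L$ is a product $\cF^{ss}(x) \times \RR$ and $L$ is a plane, while if some orbit $\gamma$ in $L$ is periodic then, because all points of $L$ lie in $\cF^{ss}$ of points of $\gamma$ (the flow on the line-orbit-space being conjugate to a translation with a fixed point), $L$ retracts onto $\gamma$ and is an annulus $E^s \times S^1$; uniqueness of the periodic orbit in $L$ comes from the fact that two periodic orbits would give two fixed points of the induced flow on the line, which is impossible for a translation-like flow, and more carefully from the contraction forcing nearby periodic orbits to coincide. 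The holonomy statement in (5) is computed directly: the Poincaré return map of $\cF^{ws}$ along $\gamma$ acts on a transversal lying in the unstable direction, where $DX_T$ expands by the Anosov condition, so the holonomy is expanding. Finally, claim (6) is essentially built into the construction: by definition of $W^{ss}_{loc}$ the quantity $d(X_{nT}(y), X_{nT}(x))$ decays like $(1/2)^n$, and interpolating over $t \in [nT, (n+1)T]$ with the bounded distortion of $X_s$, $s \in [0,T]$, gives $\limsup_{t\to\infty} \frac{1}{t}\log d(X_t(y),X_t(x)) \le -\frac{\log 2}{T} < 0$.

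The main obstacle, and the step requiring the most care, is the passage from the local hyperbolic theory (Hadamard--Perron for the single map $X_T$, which is routine) to the global statements about leaf topology in claims (4) and (5): one must argue that a $\cF^{ws}$-leaf carrying a periodic orbit is exactly an annulus and that the periodic orbit is unique, which requires controlling the global behavior of the flow restricted to the leaf and ruling out, e.g., a leaf that is a Möbius band or a leaf with two periodic orbits. This is where one genuinely uses that $E^s \oplus \RR X$ integrates to an embedded foliation (not just an immersed lamination) and that the holonomy along a periodic orbit is a contraction in the relevant sense, which pins down the germ of the leaf near $\gamma$ and, by the flow-invariance, the whole leaf. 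I would handle this by lifting to the universal cover to make the leaf simply connected when there is no periodic orbit, and in the periodic case by using the expanding holonomy to show the leaf deformation-retracts onto the unique periodic orbit; orientability of the bundles (assumed in the statement) is what prevents non-orientable leaves.
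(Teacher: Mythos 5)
The paper does not actually prove Theorem \ref{teo.stableman}: it only states it and refers to \cite{FH,CP} for proofs, and your outline is precisely the classical Hadamard--Perron/graph-transform route taken there (local strong stable manifolds for the time-$T$ map, unique integrability via contraction, flow saturation to get $\cF^{ws}$, and the return-map/expanding-holonomy analysis for the leaf topology), so it matches the intended argument. One small caution: in the periodic case the orbit space of the flow restricted to a leaf is not a line (the strong stable leaves inside such a leaf are parametrized by a circle, and a ``translation with a fixed point'' is not a coherent model), so claims (4)--(5) should be run, as you also indicate, through the contracting return map on a strong stable transversal to the periodic orbit rather than through that orbit-space picture.
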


We call $\cF^{ws}$ the weak stable foliation and $\cF^{ss}$ the strong stable foliation. In  general, we will not use $\cF^{ss}$. Of course the same result holds for constructing the weak unstable foliation $\cF^{wu}$ and the strong unstable one $\cF^{uu}$. There are very subtle issues about the regularity of these foliations. While we shall ignore mostly these issues here, we want to point out that these issues are not mere technical, as the lack of regularity of these foliations is structural about these systems (even if the systems are as smooth as possible). Indeed, it is known that if an Anosov flow has sufficiently smooth weak stable and weak unstable foliations, then, it has to be algebraic in some quite strong sense \cite{Ghys2}, so, most examples do not have smooth foliations (the results in \cite{Ghys2} are in dimension 3, and while there are important results in higher dimensions, the equivalent result is not known in higher dimensions). 

\begin{figure}[ht]
\begin{center}
\includegraphics[scale=0.85]{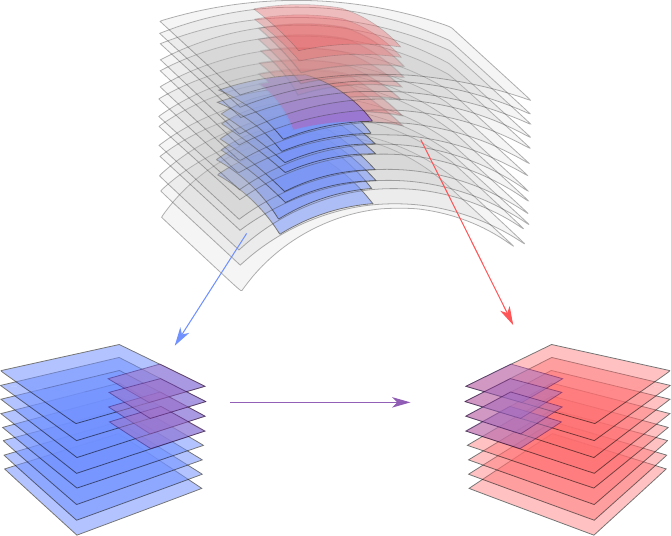}
\begin{picture}(0,0)
\end{picture}
\end{center}
\vspace{-0.5cm}
\caption{{\small Foliation charts. (Figure by Elena Gomes)}\label{f.fol}}
\end{figure}

Proofs of Theorems \ref{sstability} and \ref{teo.stableman} can be found for instance in \cite{FH,CP}. 

\subsection{Topological definition} 

From the point of view of understanding the interaction between the topology of the manifold and the kind of flows it admits, the definition of Anosov vector fields or flows may seem artificial. Also, there are many constructions which are easier to perform in lower regularity and the fact that the orbits converge exponentially fast is usually not very relevant to the topological study (because it concerns an infinitesimal property). This is why in some cases, it is convenient to use a topological definition. It has been an open question for a while if these definitions coincide, and this is still open in general, but a recent breakthrough showed that in most cases of interest in dimension 3, both definitions coincide \cite{Shannon}. 

Let us give briefly the definition in any dimension. We refer the reader to \cite[\S 3.6]{Martinchich} for a more detailed discussion. 

\begin{defi}
A flow $\phi_t: M \to M$ with $C^1$-orbits of a closed manifold is a \emph{topological Anosov flow} if it preserves two (topologically) transverse foliation $\cF^{ws}$ and $\cF^{wu}$ intersecting in the flowlines and with the property that flowlines in $\cF^{ws}$ are forward asymptotic and flowlines in $\cF^{wu}$ are backward asymptotic. 
\end{defi}

There are many subtleties in this definition, which vanish when one restricts to dimension 3. In that case, it is shown in \cite{Shannon} that when the flow is transitive then it must be orbit equivalent to a flow generated by an Anosov vector field as in the previous section. 

It is also shown\footnote{The corresponding statement for Anosov flows, is given in \cite{Fenley} with a very different proof that does not extend to the topological Anosov setting.} in \cite[\S 5]{BFP} that for topological Anosov flows in 3-manifolds leaves of $\cF^{ws}$ and $\cF^{wu}$ with the metric induced by the ambient metric are uniformly Gromov hyperbolic and orbits are quasi-geodesic in each leaf. It is also shown that one can choose some smooth structures so that leaves are smooth. We refer the reader to that paper for more details. 

In general, one can regard a topological Anosov flow as one which verifies (at least most of) the properties of Theorem \ref{teo.stableman}. We note that in dimension 3 we will not really use the strong foliations, but it is sometimes useful to know that there is always a 1-dimensional foliation transverse to a (topological) foliation by surfaces (see \cite{HectorHirsch}).  

One useful thing about working with topological Anosov flows is that when looking at equivalence up to orbit equivalence, it is relevant to know that orbit equivalence of Anosov flows needs not in general be even $C^1$ (as thus important results such as Theorem \ref{sstability} would become false). The lack of regularity of orbit equivalences makes some natural questions to be still badly understood: 

\begin{quest}
Let $\phi_t^1, \phi_t^2: M \to M$ be (smooth) Anosov flows\footnote{For an answer in the case of Anosov diffeomorphisms, see \cite{FG}.} which are orbit equivalent by an orbit equivalence homotopic to identity. Is it true that there is a path of Anosov flows connecting $\phi_t^1$ to $\phi_t^2$?
\end{quest}

In what follows, we will restrict to 3-dimensional manifolds and state results that hold for topological Anosov flows, but whenever it simplifies the exposition, we will assume that the flow is actually Anosov (comes from an Anosov vector field) without further explanation. We will from now on denote Anosov (and topologically Anosov) flows by $\phi_t: M \to M$. 


\section{Some topological obstructions}\label{s.topoobs} 

Very quickly after the notion of Anosov flow was devised, Margulis \cite{Margulis} proved a beautiful result which provided restrictions on the topology of a manifold that carries an Anosov flow. The proof was elementary, using only the basic properties from  Theorem \ref{teo.stableman} and later a new proof appeared in \cite{PlanteThurston} that used deeper properties from codimension one foliations and extended Margulis result to higher dimensions in some situations (see also \cite{Paternain} for extensions allowing to weaken the Anosov property). Essentially, it states that to be able to have exponential expansion locally everywhere, one needs enough global space. 

\begin{teo}[Margulis]\label{t.margulis}
Let $M$ be a closed 3-manifold admitting a (topological) Anosov flow $\phi_t$ then the fundamental group of $M$ has exponential growth. 
\end{teo}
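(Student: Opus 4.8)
The plan is to prove that the universal cover $\widetilde M$ has exponential volume growth; since $M$ is closed this is equivalent, by Milnor's lemma (or the \v{S}varc--Milnor lemma: $\pi_1(M)$ is finitely generated and acts freely and cocompactly on $\widetilde M$), to $\pi_1(M)$ having exponential growth. We may first pass to a finite cover, which changes neither hypothesis nor conclusion because exponential growth of a group is a commensurability invariant; so assume the bundles are orientable, so that Theorem~\ref{teo.stableman} provides $\cF^{ws},\cF^{ss}$ and their unstable analogues $\cF^{wu},\cF^{uu}$. These lift to foliations $\wfws,\wfwu$ (and $\widetilde{\cF^{ss}},\widetilde{\cF^{uu}}$) of $\widetilde M$, and the flow lifts to $\widetilde\phi_t$ commuting with the deck action; the lifted weak leaves are planes (they are simply connected: a weak unstable leaf downstairs is a plane or a cylinder, the latter retracting onto its periodic orbit, whose lift is a line, so the lift retracts onto a line).

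The engine is to show that a single lifted weak unstable leaf already has exponential area growth. Fix $\widetilde x$, set $\widetilde L=\wfwu(\widetilde x)$, and pick a short strong unstable arc $\widetilde J\subset\widetilde{\cF^{uu}}(\widetilde x)$ through $\widetilde x$. Consider the ``unstable parallelogram'' $S_\rho:=\bigcup_{0\le t\le\rho}\widetilde\phi_t(\widetilde J)\subset\widetilde L$. I would check three things: (i) $S_\rho$ is embedded in $\widetilde L$, since a coincidence $\widetilde\phi_{t_1}(y_1)=\widetilde\phi_{t_2}(y_2)$ with $y_i\in\widetilde J$ would force a flow line and a strong unstable leaf to meet twice inside the surface $\widetilde L$, impossible for two transverse foliations of a simply connected surface; (ii) $S_\rho\subset B_{\widetilde L}(\widetilde x,C\rho)$, with $C$ coming from an upper bound on $\|X\|$; (iii) by the coarea formula and the iterated expansion $\|DX_Tv^u\|\ge 2\|v^u\|$, together with the facts that $\|X\|$ and the angle between $\RR X$ and $E^u$ are bounded away from $0$ and $\infty$ on the compact $M$, one gets $\mathrm{Area}(S_\rho)\gtrsim\int_0^\rho 2^{t/T}\,dt\gtrsim 2^{\rho/T}$. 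Combining, $\mathrm{Area}\big(B_{\widetilde L}(\widetilde x,r)\big)\gtrsim\lambda^{r}$ for some $\lambda>1$.

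Next I would convert this leafwise area into ambient volume by thickening along the transverse strong stable foliation. Define $\Phi\colon\widetilde L\times(-\eps,\eps)\to\widetilde M$ by letting $\Phi(y,s)$ be the point at signed $\widetilde{\cF^{ss}}$-arclength $s$ from $y$. This map is injective: if $\Phi(y_1,s_1)=\Phi(y_2,s_2)=z$, then $y_1,y_2\in\widetilde{\cF^{ss}}(z)\cap\widetilde L$; but $\widetilde{\cF^{ss}}(z)\subset\wfws(z)$, and $\wfws(z)$ meets $\widetilde L=\wfwu(\widetilde x)$ in at most one flow line (local product structure of the lifted weak foliations, a basic consequence of the structure in Theorem~\ref{teo.stableman}, developed in \S\ref{s.barbotfenley}), while a strong stable leaf meets a flow line in at most one point; hence $y_1=y_2$, and then $s_1=s_2$ because strong stable leaves are injectively immersed lines. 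Being an injective map between $3$--manifolds, $\Phi$ is open; it displaces points by at most $\eps$ off $\widetilde L$, so $\Phi\big(B_{\widetilde L}(\widetilde x,r)\times(-\eps,\eps)\big)\subset B_{\widetilde M}(\widetilde x,r+\eps)$; and its Jacobian is bounded below since the angle between $E^{ss}$ and $E^{wu}$ is bounded below on $M$. By Fubini, $\mathrm{Vol}_{\widetilde M}\big(B_{\widetilde M}(\widetilde x,r+\eps)\big)\gtrsim\eps\,\mathrm{Area}\big(B_{\widetilde L}(\widetilde x,r)\big)\gtrsim\lambda^{r}$, so $\widetilde M$ has exponential volume growth and the theorem follows.

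The main obstacle, and the genuine content, is step (iii): it is where the dynamics enters, and for \emph{topological} Anosov flows the infinitesimal inequality $\|DX_Tv^u\|\ge 2\|v^u\|$ is unavailable. There one must replace it by a \emph{macroscopic} expansion statement --- there exist $T>0$ and $\kappa>1$ with $|\widetilde\phi_T(J)|\ge\kappa|J|$ for every sufficiently short strong unstable arc $J$ --- extracted from the defining property that flow lines in $\cF^{wu}$ are backward asymptotic, together with compactness of $M$ and the fact from \cite{BFP} that orbits are quasi-geodesic in their leaves; with this substitution the argument above goes through. A secondary technical point is that $\wfws,\wfwu$ may be merely continuous, so the Jacobian bound in the thickening step should be phrased measure-theoretically (or one uses the smooth structures from \cite{BFP}), which does not affect the estimate. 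Alternatively, one may run the classical Plante--Thurston argument: step (iii) shows every leaf of the (Reebless, as it has no compact leaves) foliation $\cF^{wu}$ has exponential growth in its intrinsic metric, and one then invokes Plante's theorem that the growth of a leaf of a foliation of a compact manifold is dominated by the growth of its fundamental group.
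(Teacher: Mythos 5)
Your overall strategy --- exponential area growth of a single lifted weak unstable leaf, transverse thickening to get exponential volume growth of $\mt$, then Milnor's lemma --- is precisely the classical Margulis/Plante--Thurston argument, which is all the paper does here as well: Theorem \ref{t.margulis} is not proved in the text but attributed to \cite{Margulis,PlanteThurston} (see also \cite{PotrieNot}), and the only ingredient reproduced later is Lemma \ref{lem-gromov}, whose box-counting proof is the same mechanism as your step (iii). In the smooth case your leafwise growth estimate is sound, so the comparison reduces to a few soft spots in the reduction steps.

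Two of these are genuine. First, your justification that lifted weak leaves are planes (``the cylinder retracts onto its periodic orbit, whose lift is a line'') assumes that periodic orbits are not null-homotopic, i.e.\ that cylinder leaves are $\pi_1$-injective --- which is exactly the point at issue; the paper obtains this package (lifted leaves are planes, and no transversal meets a lifted leaf twice, a fact you also use tacitly in (i) and in the injectivity of $\Phi$) from Novikov's theorem applied to the Reebless foliation $\cF^{wu}$, as recalled at the start of \S\ref{s.barbotfenley}. A cheap alternative for the growth estimate: take the weak unstable leaf of a non-periodic orbit, which is a plane downstairs by Theorem \ref{teo.stableman}, so each lift is automatically a plane. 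Second, in the thickening step the bound ``the Jacobian of $\Phi$ is bounded below'' is not available: $\cF^{ss}$ is only H\"older, and phrasing it ``measure-theoretically'' means invoking absolute continuity of the strong stable foliation, a substantial theorem rather than a technical remark. Cleaner repairs: thicken along a smooth one-dimensional foliation transverse to $\cF^{wu}$ (such a foliation always exists, cf.\ \cite{HectorHirsch}), where injectivity follows from the Novikov fact that a short transversal meets each lifted leaf at most once and the Jacobian bound is honest; or simply run the box-counting you mention at the end (each lifted foliation box meets the leaf in at most one plaque), which is the Plante--Thurston route and also removes your reliance on the ``weak stable and weak unstable leaves of $\mt$ meet in at most one orbit'' fact imported from the proof of Theorem \ref{teo-bifoliated}. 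Finally, for \emph{topological} Anosov flows your sketch does not yet give a proof: strong unstable arcs are not part of the data (only the two weak foliations are), so the macroscopic expansion statement must be formulated for arcs transverse to the flow inside a weak unstable leaf, extracted by compactness from uniform backward asymptoticity (or via \cite{BFP}); this case is genuinely open in your write-up, though the paper itself also only states it with references.
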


This result has been explained with a modern point of view in \cite{PotrieNot}. It excludes some families of 3-manifolds as possible manifolds admitting Anosov flows. For instance, the 3-dimensional sphere, lens spaces, the three dimensional torus, $S^2 \times S^1$ or nilmanifolds cannot admit Anosov flows. 

The argument in \cite{PlanteThurston} (which incorporates the use of the classical Haefliger argument, or Novikov's theorem, see \cite{CandelConlon}) also allows to deduce that a manifold admitting an Anosov flow must be irreducible and moreover, its universal cover be homeomorphic to $\RR^3$ (this is due to a theorem of Palmeira, see also \cite{CandelConlon, Calegari}). This excludes other classes of manifolds, such as connected sums. 

When this result appeared, very few examples of Anosov flows were known. In the end of the 70's and 80's new examples started to appear, in particular, examples on many new manifolds obtained by surgery constructions. The most mysterious ones remain those in hyperbolic 3-manifolds. 

One could expect that the fact that there is a three way splitting of the tangent bundle could impose some obstructions, however, every (orientable) 3-manifold has a trivial tangent bundle. Nevertheless, in some situations, using some properties of the foliations, one can find obstructions not covered by the previous arguments in some manifolds. An example of such an argument, is the one determining which circle bundles over surfaces admit Anosov flows (this has been extended by Barbot to Seifert spaces, see \cite{Barbot}): 

\begin{teo}[Ghys \cite{Ghys}]\label{teo.ghys}
Let $M$ be a circle bundle over a surface $S$ of genus $g$. If $M$ admits an Anosov flow, then, it is a finite cover of the unit tangent bundle of $S$. 
\end{teo}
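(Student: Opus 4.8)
\emph{Overview and reduction.} The plan is to study the weak stable foliation $\cF^{ws}$ in relation to the circle fibration $p\colon M\to S$: first isotope $\cF^{ws}$ so that it is transverse to every fibre, which presents $M$ as a flat $S^1$-bundle and produces a holonomy representation $\rho\colon\pi_1(S)\to\Homeo_+(S^1)$; then use the Anosov dynamics to show that $\rho$ semiconjugates onto a cocompact Fuchsian action of $\pi_1(S)$, which forces the Euler number of $M$, hence $M$ itself, to be as in the statement. As a preliminary reduction, pass to a finite cover (still a circle bundle over a finite cover of $S$, admitting an Anosov flow iff $M$ does) so that $M$ and all invariant bundles are orientable and $\cF^{ws},\cF^{wu}$ are co-oriented foliations. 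If $g\le 1$ then $M$ is a lens space, $S^2\times S^1$, $\TT^3$ or a nilmanifold, all of which have fundamental group of sub-exponential growth, contradicting Theorem~\ref{t.margulis}; so $g\ge 2$, $S$ admits a hyperbolic metric, and $\pi_1(M)$ sits in a central extension $1\to\ZZ\to\pi_1(M)\to\pi_1(S)\to 1$ with the $\ZZ$ generated by a regular fibre $h$.

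\emph{Putting the weak foliations transverse to the fibres.} By Theorem~\ref{teo.stableman} every leaf of $\cF^{ws}$ is an injectively immersed plane $E^s\oplus\RR X$ or open annulus $E^s\times S^1$; in particular $\cF^{ws}$ has no compact leaf, and likewise $\cF^{wu}$. The topological crux of the proof is that, on a circle bundle over a surface of genus $\ge 2$, a foliation without compact leaves can be isotoped to be transverse to every fibre: no ``vertical'' model is available since $S$ carries no nonsingular line field, and the absence of compact --- in particular vertical torus --- leaves excludes the remaining vertical pieces (this uses the structure theory of foliations of Seifert fibred $3$-manifolds). Applying this to $\cF^{ws}$, we may assume it is transverse to the fibres of $p$; then the fibration together with $\cF^{ws}$ is a flat $S^1$-connection, exhibiting $M$ as $\widetilde S\times_\rho S^1$ for a holonomy representation $\rho\colon\pi_1(S)\to\Homeo_+(S^1)$ whose Euler number equals the Euler number $e(M)$ of the bundle $M$.

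\emph{Recognising the Euler number via the dynamics.} The flat structure of the previous step yields a $\pi_1(M)$-equivariant identification $\widetilde M\cong\HH^2\times\RR$ in which $\widetilde{\cF^{ws}}$ is the horizontal foliation $\{\HH^2\times\{t\}\}$, the centre $h$ acts by unit translation on the $\RR$-factor, and $\pi_1(S)$ acts covering the Fuchsian deck action on $\HH^2=\widetilde S$. The lifted flow is tangent to the leaves, and inside a leaf its orbits are pairwise forward-asymptotic and, by the result of \cite{BFP} recalled earlier, uniformly quasi-geodesic in the Gromov-hyperbolic leaf; hence they all share a single ideal point $\xi(t)\in\partial\HH^2$. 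The map $t\mapsto\xi(t)$ is $\ZZ$-periodic (the centre acts trivially at infinity) and monotone (the leaves are linearly ordered in $\HH^2\times\RR$), so it descends to a monotone map $\bar\xi\colon S^1\to S^1$ of some degree $d\ge 1$; its image is a nonempty closed $\pi_1(S)$-invariant subset of the minimal set $\partial\HH^2$, hence $\bar\xi$ is onto; and equivariance of the flow makes $\bar\xi$ semiconjugate $\rho$ to the Fuchsian action $\rho_0$. Comparing the Euler cocycles of $\rho$ and $\rho_0$ through $\bar\xi$ (equivalently, evaluating lifts on the surface relator $\prod_i[a_i,b_i]$) gives $d\cdot e(M)=\pm e(\rho_0)=\pm(2g-2)$, so $d\mid 2g-2$ and $e(M)=\pm(2g-2)/d$. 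This is precisely the Euler number of the $d$-fold fibrewise cyclic cover of $T^1S$, and an oriented circle bundle over $S$ is determined by its Euler number; hence $M$ is that cover, i.e.\ a finite cover of $T^1S$. (If, in addition, one invokes that $\cF^{ws}$ has no exceptional minimal set --- a standard feature of weak foliations of Anosov flows --- then $\bar\xi$ is an actual covering map and the flow is orbit equivalent to the geodesic flow of $(S,\mathrm{hyp})$ lifted to that cover.)

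\emph{Where the difficulty lies.} Two steps carry the weight. First, isotoping the weak foliations to be transverse to the fibres: ruling out every form of vertical behaviour on a circle bundle over a higher-genus surface, and performing the isotopy compatibly with the flow. Second, establishing that $t\mapsto\xi(t)$ is well defined, continuous and monotone --- that the ideal points of the quasi-geodesic flow pencils depend monotonically on the leaf --- which is what turns $\bar\xi$ into an honest degree-$d$ map and legitimises the Euler-number comparison; after that the argument is bookkeeping. A different route to the last two steps, closer to the viewpoint of \S~\ref{s.barbotfenley}, analyses directly the action of the central $\ZZ$ on the orbit space of the lifted flow.
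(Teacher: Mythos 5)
Your strategy is sound and its first half coincides with the paper's: both arguments begin by isotoping $\cF^{ws}$ to be horizontal, deferring (in both cases without proof) to the structure theory of foliations of Seifert fibred spaces together with the absence of compact leaves. After that you genuinely diverge. The route the paper cites (\cite[\S 6]{HPS}) stays inside the manifold: once the leaves are transverse to the fibres, $dp$ is injective on $T\cF^{ws}$, so normalizing $dp(X)$ gives a fibre-preserving map $M \to T^1S$ covering the identity; if $d$ is its degree on the fibres, a clutching-function computation yields $d\, e(M) = e(T^1S) = \pm(2g-2)$, which forces $d \neq 0$ and $e(M)=\pm(2g-2)/d$, so $M$ is a fibrewise finite cover of $T^1S$ --- no hyperbolic geometry of leaves is used at all. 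You instead read the flat structure at infinity: the funnel points of the leafwise quasi-geodesic fans (Theorem~\ref{teo-insidestructure}, hence the machinery of \S~\ref{s.geomleaves} and \cite{BFP}) produce an equivariant circle map $\bar\xi$, and a Milnor--Wood style comparison of Euler cocycles lands on the same identity $d\,e(M)=\pm(2g-2)$. Your version is heavier, but it buys the semiconjugacy of the holonomy onto the Fuchsian action, which is the first step towards Ghys' stronger statement (orbit equivalence with a finite lift of a geodesic flow) that the paper's sketched argument does not reach.

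Two remarks on your second half. The monotonicity of $t\mapsto\xi(t)$, which you single out as a main difficulty and support only by the remark that the leaves are linearly ordered (which says nothing about $\xi$), is in fact unnecessary: the relator computation works for any continuous equivariant circle map of degree $d$, since the integer ambiguities of the chosen lifts cancel in the commutators; it gives $d\,e(M)=\pm(2g-2)$ directly, and $d\neq 0$ then comes for free because $2g-2\neq 0$, so surjectivity and minimality are not needed either. What does require an argument is continuity of the funnel point; this follows from the uniformity of the quasi-geodesic constants together with the Morse lemma (endpoints of uniform quasi-geodesic rays depend continuously on the ray), and you should say so. Finally, your preliminary passage to an orientation cover replaces $M$ by a bundle $M'$ over a cover $S'$ of $S$, and the conclusion for $M'$ does not descend to $M$ by Euler-number bookkeeping alone (the fibre degree of $M'\to M$ interferes); either arrange the orienting cover compatibly or explain how to run the Euler-class comparison on $M$ itself, since the statement concerns $M$ and not a cover.
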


While the result in \cite{Ghys} is stronger, and provides a strong classification of Anosov flows in circle bundles (the completion of this classification is contained in the recent \cite{BarbotFenley3}), this result can be seen as a Euler class obstruction result. The proof of the statement above can be summarized as follows (see also \cite{HPS}), first one shows that the weak stable foliation must be \emph{horizontal} (i.e. transverse to fibers of the circle bundle after isotopy) and then one can use the fact that the flow vector field can then produce a map into the unit tangent bundle of $S$ covering the identity. The existence of such a map implies that $M$ has to be a finite cover of $T^1S$ and gives the result (see \cite[\S 6]{HPS} for a detailed proof from this point of view). 

However, the deepest connections between the topology and geometry of manifolds admitting Anosov flows has been developed since the 1990's thanks to a more detailed study of the interaction between the weak stable and weak unstable foliations, their geometry and topology which was initiated in the very influential papers \cite{Barbot-leaf,Fenley} which we wish to describe in some detail in this notes.


\section{Foliations and leaf spaces}\label{s.barbotfenley}
The study of leaf spaces of the weak stable and weak unstable foliations of Anosov flows can be traced back to \cite{Verjovsky} who identified the interest in showing that such a leaf space can be Hausdorff in some situations and its consequences. Its systematic study started certainly in the influential and very rich works of Barbot and Fenley \cite{Barbot-leaf,Fenley}. In this section, we will focus on describing a selection of these results and venture to pose questions that highlight that, despite the remarkable progress made in the field, there remain fundamental questions to understand. All the results here without reference are due to Barbot, Fenley, or both. 

A key aspect of Barbot's and Fenley's work is to expose the fact that while the possible types of foliations that Anosov flows can have is quite rich, it also has several restrictions that makes their study simultaneously rigid in order to get strong interactions between the topology of the manifold and Anosov flows in such a manifold, and flexible enough to allow for very diverse and rich behavior. Not every foliation on a 3-manifold is the weak-stable foliation of an Anosov flow, but many are, and this makes its study very rich (we suggest looking at the diagram in the first page of \cite{HandelThurston}).

\subsection{Individual leaf spaces} 

Let $\phi_t : M \to M$ be an Anosov flow (or topological Anosov flow) on a closed 3-manifold $M$. Theorem \ref{teo.stableman} ensures the existence of transverse foliations $\cF^{ws}$ and $\cF^{wu}$ of codimension one, which (under some orientability assumptions) is by cylinders and planes. The classical Novikov theorem \cite{CandelConlon,Calegari} implies that in the universal cover, one has a foliation by planes and that no transversal intersects the same leaf twice. 

This allows the definition of a leaf space of each of the foliations. We state this as: 

\begin{lema}\label{lem.leafspace}  
Let $\wfws$ be the lift of $\cF^{ws}$ to the universal cover $\mt$ of $M$. If we consider $\cL^s$ to be the quotient space $\mt/_{\wfws}$ by considering two points to be equivalent if they belong to the same leaf of $\wfws$ we have that $\cL^s$ is a one-dimensional simply connected manifold (which may not be Hausdorff). 
\end{lema}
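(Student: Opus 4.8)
The plan is to read off the $1$-manifold structure directly from the foliation charts (using Novikov's theorem for injectivity of the charts), and then to obtain simple connectedness by lifting loops from $\cL^s$ to $\mt$.

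First I would record that the quotient map $q\colon \mt \to \cL^s$ is open: the $q$-saturation of an open subset of $\mt$ (the union of the leaves of $\wfws$ that it meets) is open, by the usual argument of joining a point of a leaf to the given open set inside the leaf and propagating openness through a finite chain of foliation boxes. As $\mt$ is second countable and $q$ is open, continuous and onto, $\cL^s$ is second countable and connected. For the chart around a point $q(x)$, take a foliation box $U \ni x$ together with a transversal arc $\tau_x \subseteq U$ meeting each plaque of $U$ once, and consider $\tau_x \hookrightarrow \mt \xrightarrow{\,q\,} \cL^s$. This map is continuous and open (a restriction of $q$), and it is \emph{injective}: by the consequence of Novikov's theorem recalled above, no arc transverse to $\wfws$ meets a leaf twice, so distinct points of $\tau_x$ determine distinct leaves, hence distinct points of $\cL^s$. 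Thus it is a homeomorphism onto an open neighbourhood of $q(x)$; the inverses of these maps form an atlas whose change-of-charts maps are homeomorphisms between open subsets of $\RR$ (holonomy maps of $\wfws$). Hence $\cL^s$ is a second countable, possibly non-Hausdorff, $1$-manifold.

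For simple connectedness I would first prove a path-lifting statement: every path $\gamma\colon [0,1] \to \cL^s$ admits a lift $\widetilde\gamma\colon [0,1] \to \mt$ in the weak sense that $q \circ \widetilde\gamma$ equals $\gamma$ up to a monotone reparametrization. Pick $0 = t_0 < \dots < t_N = 1$ with each $\gamma([t_i,t_{i+1}])$ inside a chart image $q(\tau_{(i)})$, and build $\widetilde\gamma$ by alternating two kinds of phases: a phase in which one moves inside the leaf $q^{-1}(\gamma(t_i))$ from the point reached so far to the (unique) point of $\tau_{(i)}$ on that leaf --- during which $q \circ \widetilde\gamma$ is constant --- followed by a phase in which one follows the local section over $\tau_{(i)}$ along $\gamma|_{[t_i,t_{i+1}]}$. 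The resulting path is continuous (consecutive phases agree at their common endpoint) and its image under $q$ traverses $\gamma$ with finitely many pauses inserted, as required. Now let $\gamma$ be a loop based at $\xi_0$ and lift it this way to $\widetilde\gamma$; the two endpoints of $\widetilde\gamma$ lie on the leaf $L_0 = q^{-1}(\xi_0)$, which is connected, so joining them by a path $\beta$ inside $L_0$ produces a loop $\widetilde\gamma \ast \beta$ in $\mt$. Since $\mt$ is simply connected this loop bounds a disc, and pushing a null-homotopy forward by $q$ (which sends $\beta$ to the constant $\xi_0$) gives a null-homotopy of $\gamma$. Therefore $\cL^s$ is simply connected.

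The bookkeeping of the first two paragraphs is routine; the step I expect to be the real obstacle is simple connectedness. One cannot shortcut it by classifying connected $1$-manifolds, because $\cL^s$ really can be non-Hausdorff --- pairs of non-separated leaves create branch points --- and even the absence of an embedded circle would not by itself force $\pi_1$ to vanish (the line with two origins is a cautionary example). What makes the argument go through is precisely that $q$ carries enough path-lifting, once one is willing to reparametrize and slide along leaves, to transfer the triviality of $\pi_1(\mt)$ down to $\cL^s$.
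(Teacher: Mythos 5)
Your argument is correct, and there is nothing in the paper to compare it against: the paper disposes of Lemma \ref{lem.leafspace} with ``exercise left to the reader'' and pointers to Calegari and Camacho--Lins Neto, and what you wrote is essentially the standard solution those references suggest --- charts on $\cL^s$ from transversal arcs in foliation boxes, injective because Novikov's theorem guarantees no transversal in $\mt$ meets a leaf of $\wfws$ twice, and simple connectedness by lifting a loop to $\mt$ up to pauses and slides along (connected) leaves, closing it up inside a single leaf, and pushing a null-homotopy of the closed-up loop back down through $q$, which collapses the closing arc to the basepoint. One cosmetic remark: openness of $q|_{\tau_x}$ is not automatic ``as a restriction of $q$''; it holds because $q(V)=q(\mathrm{sat}_U(V))$ for $V\subset\tau_x$ open, and the saturation of $V$ inside the foliation box $U$ is open in $\mt$ --- i.e.\ the same saturation argument you already used for $q$ itself, so this is a phrasing issue rather than a gap.
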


\begin{proof}
This is an exercise which is left to the interested reader. (See also \cite[\S 4]{Calegari} or \cite{CamachoLins}.)
\end{proof}

One dimensional simply connected (not necessarily Hausdorff) manifolds can be characterized as the leafspaces of one-dimensional foliations on the plane $\RR^2$ (see \cite{CamachoLins}) and a result of Palmeira states that every foliation by planes in a simply connected 3-manifold is homeomorphic to a foliation of $\RR^2$ times $\RR$ (in particular, the manifold has to be $\RR^3$). Note that since $\mt$ is simply connected, the foliation $\wfws$ is transversally oriented and this induces an orientation in $\cL^s$ that allows us to speak about leaves \emph{above} and \emph{below} each other. Some remarks are in order: 

\begin{itemize}
\item Each leaf $L \in \wfws$ which is a point in $\cL^s$ separates $\cL^s$ in two connected components. It follows that $\mt \setminus L$ consists of two half spaces $L^+$ and $L^-$ whose boundary is $L$ and such that projected to $\cL^s$ partition $\cL^s$ into the leaves 'above' (those in the projection of $L^+$) and 'below' (those in the projection of $L^-$) the leaf $L$. 

\item We say that two leaves $L,F$ are \emph{nested} if $L^+ \subset F^+$ or $F^+ \subset L^+$. 

\item We say that two leaves $L, F$ are \emph{non-separated} if there is a sequence $L_n \in \cL^s$ such that $L_n$ converges to both $L$ and $F$ (and possibly other leaves). In this case, it is easy to see that $L$ and $F$ cannot be nested. We say that $L$ and $F$ are \emph{branching} leaves. The branching along $L$ is \emph{positive} if the sequence $L_n \in L^+$ and negative if $L_n \in L^-$. Note that if $L_n \in L^+$ then it also holds that $L_n \in F^+$. 

\item Note that if two leaves $L, F$ are not nested, it means that they must have some non-separated leaves in between. More precisely, not being nested means that either $L^+ \cap F^+$ or $L^- \cap F^-$ are empty, we consider the region in \emph{between} to be $(L^+ \cap F^+) \cup (L^- \cap F^-)$ (note that one of the sets in the union is empty), and what we claim is that there must be non-separated leaves $L', F'$ such that every curve in $\mt$ from $L$ to $F$ must intersect both $L'$ and $F'$. 

\item Note also that if two leaves $L$ and $F$ are nested (say, that $F^+ \subset L^+$), this does not imply that there is a (positive) transversal from $L$ to $F$. 
\end{itemize}

All the considerations made above, as well as Lemma \ref{lem.leafspace} hold for $\wfwu$ which also defines a leaf space $\cL^u$. 

Note that the fundamental group $\pi_1(M)$ of $M$ acts on $\cL^s$ and $\cL^u$ faithfully, with the following properties which are not so hard to check (but some are challenging exercises if one does not have background in hyperbolic dynamics, see \cite{KH, FH} for generalities on hyperbolic dynamics): 

\begin{itemize}
\item every $\gamma \in \pi_1(M) \setminus \{\mathrm{id}\}$ acting on $\cL^s$ and $\cL^u$ has a discrete set of fixed points which can be either attracting or repelling, in particular, each fixed point is isolated.

\item If $L \in \cL^s$ is fixed by $\gamma$ then, there is a unique $F \in \cL^u$ which is fixed by $\gamma$ such that $L \cap F \neq \emptyset$ in $\mt$, the intersection corresponds exactly to the lift of a periodic orbit in $M$ whose free homotopy class is the conjugacy class of $\gamma$. Moreover, if $\gamma$ is attracting on $L$, it has to be repelling on $F$ (and viceversa). The fact that it is attracting or repelling depends only on the flow orientation of the corresponding periodic orbit with respect to the action of $\gamma$ on it. 

\item The set of points $L \in \cL^s$ which are fixed by some deck transformation is dense. (Note that this is independent on whether the flow is transitive or not, it just depends on the density of the union of the stable manifolds of periodic points which is a consequence of the shadowing lemma for hyperbolic systems.) 

\item The flow $\phi$ is transitive (i.e. has a dense orbit) if and only if the action of $\pi_1(M)$ on $\cL^s$ (resp. $\cL^u$) is minimal. This is because otherwise, there would be a closed $\pi_1(M)$ invariant set of $\cL^s$ which would project to a compact $\cF^{ws}$-saturated (resp. $\cF^{wu}$-saturated) set in $M$ which produces a repeller (resp. attractor), contradicting the existence of a dense orbit. 
\end{itemize}

These actions sometimes impose restrictions on the algebraic structure of the groups that can admit them. We refer the reader to \cite{Barbot-1mfd, RSS, Fenley-1mfd} for results in these directions. In particular, in \cite{RSS} a class of hyperbolic 3-manifolds is shown not to admit Anosov flows. We note however that all these obstructions correspond to either the existence of a Reebless foliation on the manifold, or existence of a minimal foliation, but not really about existence of Anosov flows (some of them also depend on orientability considerations). Recently, in \cite{BinYu} an example of a hyperbolic 3-manifold admitting taut foliations but not Anosov flows was presented, this corresponds to rather particular hyperbolic 3-manifolds where there are enough tools to understand quite precisely all the possible foliations such a manifold admits. In any case, many very natural questions remain open: 

\begin{quest}
Is there a hyperbolic 3-manifold so that no finite cover admits an Anosov flow? 
\end{quest}

Note that by Agol's theorem \cite{Agol} every hyperbolic 3-manifold has a finite cover which admits a taut foliation (in fact, a foliation by compact surfaces). However, it is clear that such a foliation cannot be even deformed to be the weak stable foliation of an Anosov flow (indeed, the tangent space of such a foliation has non zero Euler class, so no deformation can admit a non-vanishing vector field tangent to it). Right now, we lack techniques to address this kind of question. 

\begin{quest}\label{quest3}
What are the possible topologies of the leaf spaces of Anosov flows? Are there examples where $\cL^s$ is not homeomorphic to $\cL^u$? 
\end{quest}

In the next section we will show that the second question admits a negative answer when $\cL^s \cong \RR$. It is also possible to produce some restrictions on the topology of leaf spaces, but as far as the author is aware, a complete answer is not clear. A very related question was asked in \cite[Remark 3.43]{Barthelme}. 

A related but quite different question is the following (I am not sure if the answer is known or not, probably some examples can be made, but I have not seen it explicitly answered in the literature): 

\begin{quest}\label{quest-fol}
Let $\phi_t$ be a transitive Anosov flow which is not a suspension, is it true that $\cF^{ws}$ is homeomorphic to $\cF^{wu}$?\footnote{Note that it is not hard to construct non-transitive counterexamples to this by using the ideas of \cite{FW,BBY} and making an example with two repellers and one attractor. Moreover, note that some suspensions verify that the weak stable and weak unstable cannot be mapped one to the other by a homeomorphism. Mario Shannon suggested some possible constructions starting from this example which could produce a negative answer to this question. In the conference, the related question of finding non-$\RR$-covered flows which are not orbit equivalence to their 'flip' was also discussed (see Question \ref{quest-flip}).} 
\end{quest}

Also, there is a very important conjecture (called the $L$-space conjecture) stating the equivalence between the existence of a cooriented taut foliation, the left orderability of the fundamental group, and not being an $L$-space. In this setting, one can ask the following question (which as far as I know is open)\footnote{Note that having an action on $S^1$ does not imply that it is left orderable, because it is not always possible to lift the action to the line without considering an extension of the group. If the $H_2(M)$ has positive rank, the group is left orderable, similarly if it is $0$, because the action can be lifted, but the case where $H_2(M)$ is torsion, is not clear.}:  

\begin{quest}
Is the fundamental group of a 3-manifold admitting an Anosov flow left orderable? 
\end{quest}

In particular, it could be that the study made in \cite{RSS,Fenley-1mfd} can be refined by restricting to leaf spaces which correspond to foliations of Anosov flows, but it seems we lack a bit of understanding to pursue this here. As suggested to me by Sergio Fenley, it could be that the fact that there is a well understand structure of the non-separation of leaves for Anosov foliations, could help in this problem (see \cite{Fenley-branch}).

\subsection{$\RR$-covered Anosov flows and transitivity}

When one of the leaf spaces $\cL^s$ or $\cL^u$ is Hausdorff (in which case, it must be homeomorphic to $\RR$) we can obtain certain dynamical consequences, for instance: 

\begin{prop}\label{prop-rcovtransitive}
Assume that $\phi_t: M \to M$ is an Anosov flow such that $\cL^s \cong \RR$, then, $\phi_t$ is transitive (i.e. it has a dense orbit). 
\end{prop}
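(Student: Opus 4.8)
\medskip

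\noindent\textbf{Sketch of a proposed proof.} The plan is to prove the contrapositive: if $\phi_t$ is not transitive, then $\cL^s$ has two non-separated leaves, hence is not homeomorphic to $\RR$. (Equivalently, in the language of the bullet points above, non-transitivity is reflected in branching of the weak stable leaf space.) So assume $\phi_t$ is not transitive.

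An Anosov flow whose nonwandering set equals $M$ is transitive, since the basic sets of its spectral decomposition are then open and closed and $M$ is connected; hence the nonwandering set of $-X$ is a proper subset of $M$. A maximal basic set in the Smale order of this decomposition is an attractor for $-X$, that is, a \emph{repeller} $R\subsetneq M$ for $\phi_t$, and it is nontrivial because $E^u\neq 0$ (a single periodic orbit cannot be an attractor or repeller for an Anosov flow): thus $R$ is a compact, locally maximal, $\phi_t$-invariant hyperbolic set which is a nontrivial basic set. Being a repeller, $R$ is saturated by weak stable leaves -- the weak stable leaf through a point of $R$ is a weak unstable leaf for $-X$, hence lies in $R$ -- so its full lift $\widetilde R\subset\mt$ is a $\pi_1(M)$-invariant union of leaves of $\wfws$. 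Moreover $R$, being a proper basic set, is nowhere dense in $M$, so $\widetilde R$ has empty interior and $\mt\setminus\widetilde R$ is open, dense, and $\wfws$-saturated.

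Next I would examine a connected component $W$ of $\mt\setminus\widetilde R$ and a leaf $L\subset\partial W$, which necessarily lies in $\widetilde R$. Because $R$ is a nontrivial basic set, the weak stable leaf through any point of $R$ is dense in $R$; hence leaves of $\widetilde R$ accumulate on $L$ from the side opposite $W$, and moreover -- using Theorem~\ref{teo.stableman}(5) and the density of periodic orbits in $R$ -- there are periodic leaves inside $\widetilde R$, with \emph{expanding} holonomy, arbitrarily close to $L$. The goal is then to use this expanding transverse behaviour to show that the leaves accumulating on $L$ are also forced to pile up on a second frontier leaf $L'\neq L$; then $L$ and $L'$ are non-separated, $\cL^s$ is not Hausdorff, and the proof is complete.

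The production of the non-separated pair $L, L'$ is, I expect, the crux. The bare existence of $\widetilde R$ does not force branching: a closed proper $\pi_1(M)$-invariant subset of $\RR$ is innocuous as a set, and since both $\widetilde R$ and its complement are invariant, no limiting argument transports points between them -- so the contradiction must be genuinely dynamical, coming from the repelling holonomy transverse to $\cF^{ws}$ of Theorem~\ref{teo.stableman}(5). This is exactly where the analysis of non-separated leaves of Anosov foliations -- part of the Barbot--Fenley theory developed in the rest of this section -- is needed. A shorter, though less self-contained, alternative would be to invoke the structure theorem for $\RR$-covered Anosov flows (each is orbit equivalent to a suspension or is skew-$\RR$-covered), both families being manifestly transitive.
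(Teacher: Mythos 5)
There is a genuine gap, and you name it yourself: the step that actually produces the contradiction is missing. Your first two paragraphs (spectral decomposition, a nontrivial repeller $R$ saturated by weak stable leaves, its $\pi_1(M)$-invariant, nowhere dense lift $\widetilde R$) coincide with the paper's starting point, but from there the paper does \emph{not} try to exhibit a pair of non-separated leaves. Instead it stays inside $\cL^s\cong\RR$ and argues group-theoretically: the complement of the closed invariant set $\Lambda\subset\cL^s$ (the projection of $\widetilde R$) is a union of open intervals; if a deck transformation $\gamma$ fixes a leaf in such an interval $I$, it preserves $I$ and hence fixes its extremity $L\in\Lambda$ (here orientability is assumed); since the stabilizer of a leaf fixed by a nontrivial deck transformation is cyclic, \emph{all} elements with a fixed leaf in $I$ lie in a single cyclic group $\langle\eta\rangle$; but the fixed set of $\eta$ in $\cL^s$ is discrete, while leaves fixed by some deck transformation are dense in $\cL^s$ (density of stable leaves of periodic orbits via shadowing, which does not require transitivity). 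This contradiction only needs the hypothesis $\cL^s\cong\RR$ to know that complementary components are intervals with endpoints in $\cL^s$ -- no analysis of branching, holonomy expansion, or accumulation on a second frontier leaf $L'$ is needed, which is precisely the part your sketch leaves open and which, as you suspect, is the hard dynamical content of your route.

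Your proposed fallback is also problematic: invoking the classification of $\RR$-covered Anosov flows (suspension or skewed) to conclude transitivity is circular in this context, because that dichotomy is established \emph{using} transitivity -- in this paper the proof of Theorem \ref{teo.rcoveredboth} begins by citing Proposition \ref{prop-rcovtransitive} to get minimality of the $\pi_1(M)$-action on $\cL^s$, and the same is true of the original arguments of Barbot and Fenley. So as written the proposal records the correct setup but neither completes the dynamical argument it proposes nor offers a legitimate shortcut; to repair it along the paper's lines, replace the search for non-separated leaves by the cyclic-stabilizer versus dense-fixed-leaves argument inside a complementary interval of $\Lambda$.
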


\begin{proof}We will assume that everything is orientable for simplicity. 

If $\phi_t$ is not transitive, then, there is a closed $\pi_1(M)$ invariant set $\Lambda \subset \cL^s$ (corresponding to the lift of a hyperbolic repeller) so that $\cL^s \neq \Lambda$. 

Consider a connected component $I$ of the complement of $\Lambda$ in $\cL^s$. Let $L \in \cL^s$ be an extremity of $I$ and assume that $E \in I$ is a leaf which is fixed by some deck transformation $\gamma \in \pi_1(M)$. Since $\Lambda$ is $\pi_1(M)$ invariant, we deduce that $\gamma(L)=L$. Since the stabilizer of $L$ is cyclic, it follows that there is $\eta \in \pi_1(M)$ such that if an element of $\pi_1(M)$ has a fixed point in $I$, then it belongs to the group $\langle \eta \rangle$ generated by $\eta$. On the other hand, we know that the fixed point set of $\eta$ is discrete, and that the set of elements which are fixed by some element of $\pi_1(M)$ is dense in $\cL^s$, this is a contradiction. 
\end{proof}

There are many other reasons to consider this class as particular and give it a name: 

\begin{defi}\label{def-rcov}
An Anosov flow $\phi_t: M \to M$ is called $\RR$-\emph{covered} if $\cL^s$ or $\cL^u$ are homeomorphic to $\RR$. 
\end{defi}

It follows from the work of \cite{Fenley,Barbot-leaf} that if one of the leaf spaces is homeomorphic to $\RR$, then, so is the other. We will see this later, we first need to understand the orbit space of an Anosov flow. 

We will give an indication of a different proof of this fact, using a result of Brunella \cite{Brunella} that is interesting by itself: 

\begin{teo}[Brunella]\label{teo.brunella}
Let $\phi_t : M \to M$ be a non-transitive Anosov flow with orientable bundles. Then, there is a union of incompressible tori $T_1, \ldots, T_k$ which are transverse to the flow, their union separates $M$  and there are orbits staying in each component of $M \setminus \bigcup T_i$. 
\end{teo}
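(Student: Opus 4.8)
The plan is to separate the basic pieces of $\phi_t$ by the boundary surfaces of a filtration adapted to its spectral decomposition, and then to check that these surfaces are incompressible tori; the only substantive point is the incompressibility.

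First I would invoke the spectral decomposition. Being Anosov, $\phi_t$ satisfies Axiom A (the hyperbolic splitting is global, so $\Omega(\phi_t)$ has local product structure and equals the closure of the periodic orbits) and has no cycles, so $\Omega(\phi_t)=\Lambda_1\sqcup\cdots\sqcup\Lambda_n$ splits into finitely many basic sets, partially ordered by $\Lambda_i\succeq\Lambda_j$ when $W^u(\Lambda_i)\cap W^s(\Lambda_j)\neq\emptyset$ (see \cite{FH,KH}). If $n=1$ then, since the $\omega$-limit set of every orbit lies in $\Omega(\phi_t)=\Lambda_1$, we get $W^s(\Lambda_1)=M$, hence $\Lambda_1=\bigcap_{t\ge0}\phi_t(M)=M$, so $\phi_t$ is transitive; thus non-transitivity forces $n\ge2$. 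By Conley theory (equivalently, the classical construction of filtrations for Axiom A flows) there is then a nested family of compact codimension-zero submanifolds $\emptyset=N_0\subset N_1\subset\cdots\subset N_n=M$, each with smooth boundary transverse to $X$, with the flow entering $N_i$, and arranged so that each $\overline{N_i\setminus N_{i-1}}$ is an isolating neighborhood of the single basic set $\Lambda_i$. After a small perturbation the surfaces $\partial N_1,\dots,\partial N_{n-1}$ are pairwise disjoint, and I take $T_1,\dots,T_k$ to be their connected components. Since the $N_i$ are nested with $N_n=M$, the union $\bigcup_j T_j$ separates $M$ into the pieces $\overline{N_i\setminus N_{i-1}}$ $(1\le i\le n)$, and each such piece, being an isolating neighborhood of $\Lambda_i$, contains the nonempty invariant set $\Lambda_i$, hence complete (indeed periodic) orbits of $\phi_t$; this already gives the required separation and the orbits in each component.

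Next, any closed surface $S$ transverse to $\phi_t$ is a union of tori. Indeed $\RR X$ is everywhere tangent to $\cF^{ws}$ by Theorem \ref{teo.stableman} but nowhere tangent to $S$, so at each $p\in S$ the planes $T_pS$ and $T_p\cF^{ws}$ are distinct $2$-planes in $T_pM$ and hence meet transversally; thus $\cF^{ws}$ cuts out a nonsingular one-dimensional foliation on $S$, forcing $\chi(S)=0$, so each component of $S$ is a torus or a Klein bottle. Since the bundles --- hence $M$, and hence $S$, which is two-sided because transverse to a flow --- are orientable, every component of $S$ is a torus. Applied to the $\partial N_i$, this shows the $T_j$ are tori transverse to $\phi_t$.

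It remains to prove incompressibility, which I expect to be the main obstacle. Suppose some $T_j$ is compressible. Since $M$ is irreducible (a manifold carrying an Anosov flow is irreducible), a compressing disk can be used to surger $T_j$ into a $2$-sphere bounding a ball, and it follows that $T_j$ bounds an embedded solid torus $V\subset M$ on at least one of its two sides. Reversing the flow if necessary, we may assume the flow enters $V$, so $V$ is a trapping region and $\Lambda:=\bigcap_{t\ge0}\phi_t(V)$ is a nonempty attractor of $\phi_t$ contained in the solid torus $V$; it contains a periodic orbit $\gamma$ and, with it, the complete weak unstable leaf $W^u(\gamma)$, an injectively immersed cylinder lying in $V$. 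The task is then to rule out that a basic set --- or just an attractor --- of an Anosov flow can be carried by a solid torus. The easy half is a $\pi_1$-obstruction: every periodic orbit in $\Lambda$ is freely homotopic in $V$ to a power of the core $c$ and is non-null-homotopic in $M$ (periodic orbits of Anosov flows represent nontrivial conjugacy classes: in $\widetilde M\cong\RR^3$, foliated by the planes of $\wfws$, a closed orbit lifts to a properly embedded line, not a circle), so $c$ maps to an element of infinite order of $\pi_1(M)$. This alone does not suffice, since a solid torus has the same fundamental group as a single periodic orbit; one must confront the geometry of the weak unstable leaves contained in $\Lambda$ --- complete cylinders, lifting in $\widetilde M\cong\RR^3$ to planes that separate $\RR^3$ --- with the product structure of the cyclic cover of $V$ and, ultimately, with the exponential growth of $\pi_1(M)$ (Theorem \ref{t.margulis}). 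This finer analysis, which is the substantive content of \cite{Brunella}, is the hard step; granting it, every $T_j$ is incompressible, which completes the proof.
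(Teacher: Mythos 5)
Your first two steps are essentially the paper's: a filtration adapted to the spectral decomposition (the paper phrases this via a smooth non-constant Lyapunov function and a regular level set) produces closed surfaces transverse to the flow separating regions containing complete orbits, and the line field cut out on such a surface by $\cF^{ws}$ forces Euler characteristic zero, hence tori under the orientability assumptions. That part is sound and matches the intended argument.

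The genuine gap is the incompressibility, which is precisely the content of the statement and which you explicitly defer (``granting it, every $T_j$ is incompressible''); as written the key step is simply not proved. Moreover, the reduction you set up before punting is itself flawed: in an irreducible $3$-manifold a compressible torus either bounds a solid torus or is contained in a ball (this is the dichotomy the paper quotes from \cite[Lemma A.2]{BFFP}), and in the second case the complementary piece inside the ball may be a nontrivial knot exterior, so your claim that $T_j$ must bound a solid torus on some side in $M$ does not follow, and your subsequent attack (attractors carried by solid tori) is keyed to the wrong configuration. The paper's sketch closes the argument differently and more cheaply: in either case of the dichotomy, transversality of $T_j$ to the flow forces the forward or the backward orbit of $T_j$ to remain trapped in the compressible side, a compact region whose fundamental group has cyclic or trivial image in $\pi_1(M)$, hence whose lift to $\widetilde M$ has subexponential volume growth; a local version of Margulis's argument (Theorem \ref{t.margulis}), i.e.\ the exponential volume growth that the Anosov property forces on unstable (or stable) leaves inside the trapped region, then gives a contradiction. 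So the missing idea is this trapped-region-plus-growth argument, not a finer analysis of basic sets in solid tori as in \cite{Brunella}.
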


\begin{proof}[Sketch of the proof]
It is a general fact that flows on compact manifolds admit \emph{Lyapunov functions}. This means that there is a function $V: M \to \RR$ so that the gradient $\nabla V$ is never orthogonal to the vector field $X$ generating the Anosov flow. One can assume that this Lyapunov function is smooth. If the flow is not transitive, we get that the Lyapunov function is not constant, so, by considering a regular value, we obtain a compact submanifold $S$ everywhere transverse to the Anosov flow. This compact submanifold (which is not necessarily connected) separates $M$ in (at least) two connected components. Since each connected component of $S$ admits a foliation (the intersection with, say, the weak stable foliation) we deduce that each connected component of $S$ is a torus (recall we are assuming that everything is orientable). 

We need to show that each tori is incompressible to complete the proof. For this, we use the following fact about irreducible 3-manifolds: if a torus is compressible, then, it either bounds a solid torus or is contained in a ball (see e.g. \cite[Lemma A.2]{BFFP}). In both cases, it follows that either the forward or backward orbit of the torus remains in a manifold (with boundary) of subexponential growth, contradicting Theorem \ref{t.margulis} (or rather a local version of it, which admits the same proof). 
\end{proof}

Using this result, one can prove that a non-transitive Anosov flow cannot be $\RR$-covered by showing the following result that we will deduce in the next section: 

\begin{prop}\label{prop-transversetorus} 
Let $\phi_t: M \to M$ be an Anosov flow and let $T$ be a transverse torus so that there are orbits of $\phi_t$ which do not intersect $T$. Then, $\phi_t$ is not $\RR$-covered.   
\end{prop}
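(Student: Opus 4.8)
The plan is to work in the universal cover $\mt$ and read the obstruction off the orbit space $\cO\cong\RR^2$ of $\phi_t$, together with its two transverse foliations $\cO^s,\cO^u$ whose leaf spaces are $\cL^s$ and $\cL^u$; since these are exactly the tools built ``in the next section'', I will use them freely. First I would reduce to the case that $T$ is incompressible: if $T$ were compressible then, $M$ being irreducible, $T$ bounds a solid torus or lies in a ball, and as $T$ is transverse to $\phi_t$ the forward or the backward orbit of the compact region $T$ cuts off stays in a submanifold with boundary whose fundamental group has subexponential growth, contradicting (a local form of) Theorem~\ref{t.margulis}. So I may assume $\pi_1(T)\cong\ZZ^2$ injects into $\pi_1(M)$, and I let $\widetilde T\subset\mt$ be a component of the preimage of $T$: it is a properly embedded plane, invariant under $G:=\pi_1(T)$, with $\widetilde T/G\cong T$.

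Next I would place $\widetilde T$ inside $\cO$. Being transverse to the flow and connected, $\widetilde T$ is consistently co-oriented, so each orbit of the lifted flow crosses it at most once; by transversality the orbit projection $q\colon\mt\to\cO$ then restricts to an open embedding of $\widetilde T$ onto an open set $U=q(\widetilde T)$, which is $G$-invariant with $U/G\cong T$ compact. Crucially $U\neq\cO$: by hypothesis some orbit of $\phi_t$ misses $T$, hence lifts to an orbit disjoint from every translate of $\widetilde T$, giving a point of $\cO\setminus U$. Since the flow direction is tangent to $\cF^{ws}$, the plane $\widetilde T$ is also transverse to $\wfws$, and inside any weak stable leaf $L$ --- a plane foliated by its flowlines --- the trace $L\cap\widetilde T$ is a $1$--manifold everywhere transverse to the flowlines, hence a graph over an interval in the stable direction, and in particular connected. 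Consequently $\cF^{ws}|_T$ and $\cF^{wu}|_T$ are nonsingular foliations of $T\cong\TT^2$ whose lifted leaf spaces inject into $\cL^s$ and $\cL^u$ respectively, and a leaf of $\wfws|_{\widetilde T}$ meets a leaf of $\wfwu|_{\widetilde T}$ in at most one point (since in $\mt$ a weak stable leaf meets a weak unstable leaf in at most one orbit).

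Now suppose, for contradiction, that $\phi_t$ is $\RR$-covered, so $\cL^s\cong\RR$ and, by the next section, also $\cL^u\cong\RR$. The image of $U$ under $\cO\to\cL^s$ is a $G$-invariant open interval; if it were proper its nonempty boundary would be a finite $G$-invariant subset of $\cL^s$, so a finite-index, hence again $\ZZ^2$, subgroup of $G$ would fix a point of $\cL^s$, contradicting the fact (used in the proof of Proposition~\ref{prop-rcovtransitive}) that point-stabilizers in $\cL^s$ are cyclic. Hence every weak stable leaf meets $\widetilde T$, so the leaf space of $\wfws|_{\widetilde T}$ is all of $\cL^s\cong\RR$; a foliation of $\TT^2$ whose universal lift has leaf space $\RR$ is conjugate to a linear one, and it has no closed leaf: such a leaf $c$ would be a primitive essential simple closed curve with $[c]\neq 1$ in $\pi_1(M)$, yet every leaf being a parallel copy of $c$ lying in a weak stable leaf that it fixes, and these copies exhausting $\cL^s$, the element $[c]$ would act trivially on $\cL^s$, which is impossible since the $\pi_1(M)$-action on $\cL^s$ is faithful. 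So $\cF^{ws}|_T$ is an irrational linear foliation and $G$ acts on $\cL^s\cong\RR$ freely (orientation-reversing elements would have fixed points) with dense orbits, hence by H\"older's theorem conjugately to a dense rank-two group of translations; the same holds for $\cF^{wu}|_T$ and $\cL^u$.

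To finish I would use that leaves of the two induced foliations on $\widetilde T$ meet at most once, so the leaf-class map $\widetilde T\to\cL^s\times\cL^u\cong\RR^2$ is injective, hence, by invariance of domain, an open embedding onto a set $\Omega$ on which $G$ acts by the product of the two translation actions above, i.e.\ by translations through a rank-two subgroup $\Gamma\le\RR^2$, with $\Omega/\Gamma$ compact and both projections of $\Omega$ surjective. Combined with the description of the orbit space of an $\RR$-covered flow from the next section --- which identifies the ``product'' case with suspensions, where a transverse torus necessarily meets every orbit so that $U=\cO$, and rules out proper $\ZZ^2$-invariant open subsets with compact quotient in the remaining ``skew'' case --- this forces $U=\cO$, contradicting the second paragraph. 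The step I expect to be the real obstacle is precisely this last one: turning ``some orbit misses $T$'' into a genuine contradiction is where the rigidity of $\RR$-covered flows must be invoked (the product/skew dichotomy, the shape of $\cO$, and the non-existence of cocompact proper invariant open subsets). The first three steps are essentially soft --- foliations of the torus, faithfulness of the leaf-space action, H\"older's theorem --- and all the weight sits in the orbit-space structure, which is exactly why the proof is deferred to the next section.
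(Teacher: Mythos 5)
Your setup (reduction to the incompressible case, the lift $\widetilde T$ embedding onto a proper, $G\cong\ZZ^2$-invariant, cocompact open set $U\subsetneq\cO_\phi$, and the surjectivity of the projection of $U$ to $\cL^s$ via cyclic leaf stabilizers) is sound and matches the paper's starting point. But the proof has a genuine gap exactly where you flag it: the final contradiction. You reduce everything to the assertions that (i) in the product case a transverse torus meets every orbit, and (ii) in the skewed case there is no proper $\ZZ^2$-invariant open subset of $\cO_\phi$ with compact quotient and full leaf-space projections, and you attribute both to ``the description of the orbit space from the next section''. Neither statement appears there: the dichotomy section only proves that $\RR$-covered flows are product or skewed and that product implies suspension. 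Statements (i) and (ii) are precisely the content of Proposition~\ref{prop-transversetorus} in disguise, so as written the argument is circular at its crux. The paper closes this gap by a different mechanism: working inside $\hat T=U$, either one already sees non-separated leaves of $\cG^s$ or $\cG^u$, or the restricted transverse foliations on the torus have no Reeb annuli and hence a global product structure on $U$; then, since $U\neq\cO_\phi$, a sequence of (say) $\cG^u$-leaves escaping $U$ cannot converge to a single leaf, because a unique limit would allow the product structure to be extended beyond $U$ to a maximal $\ZZ^2$-invariant plane, which is impossible; this exhibits branching and hence non-Hausdorff leaf space. Some such argument about what happens at the frontier of $U$ is unavoidable, and it is absent from your proposal.

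There are also flaws in the intermediate steps you call ``soft''. The connectivity of $L\cap\widetilde T$ for a weak stable leaf $L$ does not follow from transversality: that only shows each \emph{component} is a graph over an interval of the orbit space of $L$, not that there is a single component, and injectivity of the leaf space of $\wfws|_{\widetilde T}$ into $\cL^s$ (which you use to identify it with $\cL^s$) needs this. The no-closed-leaf argument is incorrect as stated: a suspension foliation of $\TT^2$ can have a closed leaf without all leaves being parallel closed copies of it, and a closed trace leaf only forces $[c]$ to fix the stable leaves containing closed traces, which is no contradiction (elements fixing stable leaves are exactly those representing periodic orbits). Likewise ``lifted leaf space $\RR$ implies conjugate to a linear foliation'' is false (Denjoy-type and non-minimal suspensions), and H\"older's theorem gives only a semi-conjugacy to translations unless minimality is established separately. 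These points could probably be repaired, but together with the missing final step the proposal does not yet constitute a proof.
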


Note that in \cite{FW} examples of non-transitive Anosov flows had been constructed. For a while, it was thought that maybe transitive Anosov flows could all be $\RR$-covered, but this was disproved (using the previous proposition) by a beautiful example from \cite{BL}. Now, there is a zoo of examples \cite{BBY} that use the previous proposition. In \cite{Fenley-branch} the first example of non-$\RR$-covered Anosov flows without transverse tori was constructed. Again, now there is a large industry, see \cite{BI}.

\subsection{Orbit space and the bifoliated plane} 

Here, we introduce, for an Anosov flow $\phi_t: M \to M$, its orbit space $\cO_\phi = \mt /_\sim$ where we consider $y \sim x$ if there exists $t \in \RR$ so that $y = \tilde \phi_t(x)$ (note that this is an equivalence relation). 

The main result of this subsection is that

\begin{teo}[Barbot-Fenley]\label{teo-bifoliated}
The set $\cO_\phi$ is homeomorphic to $\RR^2$ and it is \emph{bifoliated} by two transverse foliations $\cG^s$ and $\cG^u$ which correspond to the projections to the quotient of the foliations $\wfws$ and $\wfwu$ respectively. The action of the fundamental group $\pi_1(M)$ induces an action on this bifoliated plane. 
\end{teo}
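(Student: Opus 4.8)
The plan is to establish three things in sequence: that $\cO_\phi$ is a simply connected (possibly non-Hausdorff) surface, that it is in fact Hausdorff and hence homeomorphic to $\RR^2$, and finally that the two weak foliations descend to transverse foliations of it on which $\pi_1(M)$ acts. First I would note that $\mt$ is homeomorphic to $\RR^3$ (by the Palmeira-type result recalled after Lemma~\ref{lem.leafspace}, together with Novikov and Margulis via Theorem~\ref{t.margulis}), and that $\tilde\phi_t$ is a flow on $\RR^3$ without fixed points whose orbits are properly embedded lines: properness follows because a periodic orbit downstairs lifts to a line that is permuted by an infinite cyclic subgroup acting properly discontinuously, and non-periodic orbits lift to injective proper lines by the stable-manifold structure (orbits are forward asymptotic inside $\wfws$-leaves and backward asymptotic inside $\wfwu$-leaves, so they cannot accumulate on themselves). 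Hence $\cO_\phi$ is locally homeomorphic to $\RR^2$: near any orbit one takes a small embedded $2$-disk transverse to $X$ (using $C^1$-orbits), and the flow-box structure shows the quotient map restricted to such a disk is an open embedding into $\cO_\phi$. Simple connectivity of $\cO_\phi$ then follows from simple connectivity of $\mt$ since the quotient map $\mt \to \cO_\phi$ is a fibration with contractible ($\cong\RR$) fibers.

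The main point is Hausdorffness of $\cO_\phi$. Here I would use the foliation structure: the orbit space carries the two transverse one-dimensional foliations $\cG^s,\cG^u$ obtained by intersecting a transverse disk with $\wfws$ and $\wfwu$ respectively, and a leaf of $\cG^s$ through the projection of an orbit $o$ is precisely the image of the weak stable leaf $\wfws(o)$, which is a plane $\cong\RR^2$ quotiented by its flowlines, i.e. $\cong\RR$; similarly for $\cG^u$. The key claim is that any leaf of $\cG^s$ meets any leaf of $\cG^u$ in at most one point. Transversality gives \emph{discreteness} of the intersection; to upgrade to at most one point, suppose a $\cG^s$-leaf $\ell^s$ and a $\cG^u$-leaf $\ell^u$ met in two points $p\neq q$. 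Lifting back to $\mt$, one gets two distinct orbits lying in the intersection $\wfws(\tilde o)\cap\wfwu(\tilde o')$ of a single weak stable and a single weak unstable leaf; but that intersection, being $X_t$-invariant and closed in each leaf, and the leaves being planes transverse along their intersection, is a single orbit — the first return/holonomy argument of Theorem~\ref{teo.stableman}(5) shows a weak stable and a weak unstable leaf cannot share two flowlines (this is exactly where properness and the asymptotic behavior enter). Granting this ``unique intersection'' property, Hausdorffness of $\cO_\phi$ follows by a standard argument: if $p_n\to p$ and $p_n\to p'$ with $p\neq p'$, pass to the $\cG^s$-leaf through $p$ and the $\cG^u$-leaf through $p'$; these are closed, the $p_n$ eventually lie in small product neighborhoods of both, and one extracts an intersection point forced to be both $p$ and $p'$ by uniqueness, unless $p,p'$ lie on a common $\cG^s$- or $\cG^u$-leaf, a case one rules out separately using that each leaf is a (Hausdorff) line properly embedded in $\cO_\phi$. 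A simply connected Hausdorff surface that is a union of planes and carries two transverse foliations is homeomorphic to $\RR^2$ (again by the Palmeira/leaf-space dictionary in dimension two, or Kneser).

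Finally, $\pi_1(M)$ acts on $\mt$ by deck transformations commuting with $\tilde\phi_t$ (the flow is the lift of $\phi_t$), hence descends to an action on $\cO_\phi$ by homeomorphisms; since the deck action preserves $\wfws$ and $\wfwu$, the induced action preserves $\cG^s$ and $\cG^u$. This gives the full statement.

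\medskip\noindent\emph{Expected main obstacle.} The hard part will be the ``unique intersection'' property of a $\cG^s$-leaf with a $\cG^u$-leaf, i.e. showing a single weak stable leaf and single weak unstable leaf in $\mt$ cannot contain two distinct orbits. Transversality alone only gives a discrete intersection; ruling out several components requires genuinely using that orbits are forward (resp.\ backward) asymptotic within stable (resp.\ unstable) leaves together with properness of orbits in $\mt$ — essentially the content that makes the leaf spaces $\cL^s,\cL^u$ well-behaved. Everything else (local structure, simple connectivity, equivariance) is soft.
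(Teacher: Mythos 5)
Your overall skeleton (local surface structure, simple connectivity, the two foliations and the $\pi_1(M)$-action descending) matches the paper, and you correctly isolate the fact that a leaf of $\wfws$ and a leaf of $\wfwu$ in $\mt$ meet in at most one orbit. But you have mislocated the real difficulty: the passage from this unique-intersection property to Hausdorffness of $\cO_\phi$ is not a ``standard argument'' --- it is the actual content of the theorem, and your sketch of it breaks down in the main case. If $p_n\to p$ and $p_n\to p'$ with $\cG^s(p)\neq\cG^s(p')$ and $\cG^u(p)\neq\cG^u(p')$ (the would-be ``perfect fit corner'' configuration), there is no reason whatsoever for $\cG^s(p)$ and $\cG^u(p')$ to intersect, and even if they did, nothing forces the intersection point to be $p$ or $p'$; so no contradiction is extracted. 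Unique intersection only disposes of the easy case where $p$ and $p'$ share a stable or an unstable leaf (then $\cG^u(p_n)$, resp.\ $\cG^s(p_n)$, would eventually cross that common leaf twice), which is the case you relegate to a side remark. Note also that your justification of the unique-intersection claim itself (``$X_t$-invariant and closed, hence a single orbit'') is not an argument --- a union of two orbits is also invariant and closed --- though the paper likewise only cites this as a known consequence of Theorem \ref{teo.stableman}.

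What actually closes the gap in the paper (following \cite{FP-Hsdff}) is the absence of non-separated flowlines \emph{inside a single weak leaf}, used as a dynamical input in the limiting argument and not merely, as in your write-up, to say that leaves of $\cG^s$ are lines. Concretely: with $\ell_n=L_n\cap E_n$ the approximating orbits, local product structure near $p$ and near $q$ gives orbits of $L_p\cap E_n$ through points $x_n\to p$ and of $L_q\cap E_n$ through points $y_n\to q$, both inside the same unstable leaf $E_n$; then for fixed large $n$ the auxiliary orbits $c_{n,k}=E_n\cap L_k$ accumulate, within $E_n$, on both the orbit of $x_n$ and the orbit of $y_n$, and Hausdorffness of the orbit space of the flow restricted to the leaf $E_n$ forces these to be the same orbit, whence $L_p=L_q$; the symmetric argument gives $E_p=E_q$, and only then does unique intersection conclude that $p$ and $q$ lie on one orbit. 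Without an argument of this kind (or Barbot's and Fenley's original ones using the dynamics), your proposal does not rule out a non-Hausdorff orbit space, so as written it has a genuine gap at its central step.
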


\begin{proof} 
We follow the proof from \cite{FP-Hsdff} which is slightly more general (it holds for general transverse foliations on which the leaf space of the intersected foliation in each leaf is Hausdorff). A key fact, which follows from Theorem \ref{teo.stableman} is that in $\mt$ if one considers a leaf $L \in \wfws$ and $E \in \wfwu$, the intersection $L\cap F$ if non empty, is exactly one orbit of $\tilde \phi_t$. See figure \ref{f.twocomp}.

\begin{figure}[ht]
\begin{center}
\includegraphics[scale=0.65]{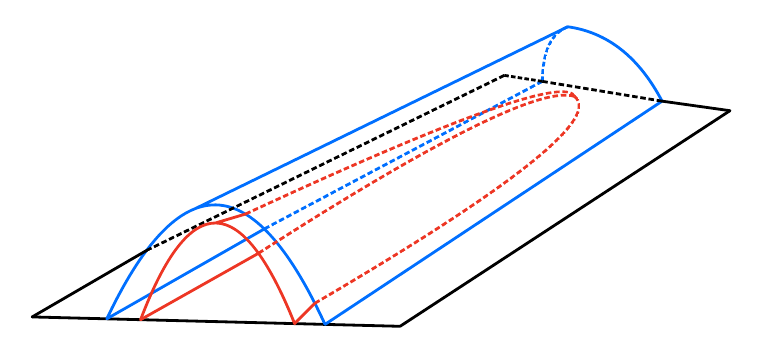}
\begin{picture}(0,0)
\end{picture}
\end{center}
\vspace{-0.5cm}
\caption{{\small If a pair of leaves intersects in more than one component, one can see the non-Hausdorffness inside each of the leaves.}}\label{f.twocomp}
\end{figure}

We consider a sequence of orbits $\ell_n \in \cO_\phi$ with points $p_n, q_n \in \mt$ (so that $p_n =\tilde \phi_{t_n}(q_n)$) such that $p_n \to p$ and $q_n \to q$. We want to show that $p$ and $q$ belong to the same orbit of $\tilde \phi_t$. 

Without loss of generality and up to subsequence, we can assume that in small
foliated neighborhoods of $p$ and $q$ respectively, the points $p_n$ and $q_n$ are weakly
monotonic in the leaf space of each of the foliations (in fact, one can choose them
to either be strictly monotonic, or to stay always in the same plaque as $p$ or $q$).
Let $L_n \in \wfws$ and $E_n \in \wfwu$ be so that $\ell_n = L_n \cap E_n$ (recall that weak stable and weak unstable leaves in $\mt$ intersect in a unique orbit). 

Let $L_p, L_q \in \wfws$ be the leaves through $p$ and $q$ respectively and $E_p,E_q \in \wfwu$ the corresponding leaves of $p$ and $q$. We want to show that $L_p=L_q$ and $E_p=E_q$ which concludes thanks to the key remark above. 

We define $c_{n,k} =  E_n \cap L_k$. Note that it is non-empty since $p_n$ and $p_k$ both belong
to a foliated box close to $p$. 

We can consider sequences $x_n \to p$ with $x_n \in L_p \cap E_n$ and similarly $y_n \to q$ with
$y_n \in L_q \cap E_n$. Fixing $n>0$ so that $x_n$ is very close to $p$ we get that we can find
points $z_k \to x_n$ and $w_k \to  y_n$ in the orbit $c_{n,k}$. Since the points $z_k$ and $w_k$ are in the same orbit for all $k$ and all contained in $E_n$, it follows that $x_n$ and $y_n$ must be in the same orbit, because inside $E_n$ there are no non-separated orbits. In particular this implies that $L_p = L_q$. A symmetric argument shows that $E_p =E_q$ and concludes the proof that $\cO_\phi$ is Hausdorff. Since it is a topological surface, non compact and it is simply connected, it follows that it has to be the plane. 

Now, defining the foliations $\cG^s$ and $\cG^u$ as the projections of $\wfws$ and $\wfwu$ we get the bifoliated plane. Clearly, since all objects are invariant under deck transformation, this structure admits the desired action of $\pi_1(M)$. 

\end{proof}

\begin{obs}
It is a fact that for smooth 3-dimensional Anosov flows the weak stable and unstable foliations are in fact $C^{1}$ (and in fact, with H\"{o}lder continuous derivatives). Since the flow is smooth, we get smooth charts, so that the bifoliated plane can be given a smooth structure on which $\pi_1(M)$ acts. In particular, one can get better regularity of this action. 
\end{obs}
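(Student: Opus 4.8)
The plan is to separate the statement into the analytic input, which I would cite, and the geometric construction, which is the actual content to carry out. The analytic input is that for a $C^\infty$ three–dimensional Anosov flow the plane fields $E^s \oplus \RR X$ and $\RR X \oplus E^u$ are $C^1$ (indeed with H\"older derivatives), and hence, by unique integrability (Theorem \ref{teo.stableman}), so are the foliations $\cF^{ws}$ and $\cF^{wu}$. This is the classical regularity theory obtained through the graph transform / $C^r$–section theorem applied to the section $x \mapsto E^s_x \oplus \RR X_x$ of the Grassmann bundle of $2$–planes transverse to $E^u$: in dimension $3$ the two strong bundles are line bundles and the center is the (one–dimensional) flow direction, so the bunching inequalities needed to gain one derivative are automatically satisfied. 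I would not reprove this; I take it as given (see e.g. \cite{FH}, and note the rigidity counterpart \cite{Ghys2}). What remains is to put a smooth structure on $\cO_\phi$ for which the deck action is smooth.

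First I would build an atlas on $\cO_\phi$ out of smooth flow boxes. Fix $p \in \mt$; since the lift $\tilde X$ of $X$ to $\mt$ is smooth and nonvanishing, the flow–box theorem gives a smooth embedding $\psi \colon D^2 \times (-\varepsilon,\varepsilon) \to \mt$ with $\psi(0,0) = p$ and $\psi_*(\partial_t) = \tilde X$. Let $\Sigma = \psi(D^2 \times \{0\})$, a smooth disk transverse to the flow, and let $q := \pi|_\Sigma \colon \Sigma \to \cO_\phi$, where $\pi \colon \mt \to \cO_\phi$ is the quotient map. After shrinking $D^2$, $q$ is injective: if $x \neq x'$ in $\Sigma$ had $q(x)=q(x')$ then $x' = \tilde\phi_t(x)$ with $t\neq 0$, and passing to the limit along such pairs approaching $p$ one gets either that $p$ is a periodic point of $\tilde\phi$ — impossible, since orbits in the simply connected $\mt$ are properly embedded lines — or $t \to 0$, which the flow box excludes once $D^2$ is small. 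Since the $\tilde\phi$–saturation of an open subset of $\Sigma$ is open in $\mt$, the map $q$ is also open, hence a homeomorphism onto an open subset of $\cO_\phi$; I declare $q^{-1}$ to be a chart. Covering $\mt$ by such flow boxes (finitely many modulo $\pi_1(M)$, by compactness of $M$) yields an atlas.

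Next I would check the transition maps are smooth, which is the only place beyond bookkeeping. Given two such charts $q_i \colon \Sigma_i \to \cO_\phi$, $i=1,2$, with overlapping images, the transition $q_2^{-1}\circ q_1$ sends $x \in \Sigma_1$ to the unique point of $\Sigma_2$ on the $\tilde\phi$–orbit of $x$, namely $\tilde\phi_{\tau(x)}(x)$ where $\tau(x)$ solves $h_2(\tilde\phi_\tau(x)) = 0$ and $h_2$ is the smooth $t$–component of the flow-box chart around $\Sigma_2$. The function $(\tau,x) \mapsto h_2(\tilde\phi_\tau(x))$ is smooth, because the flow of the smooth vector field $\tilde X$ is smooth, and its $\tau$–derivative is nonzero by transversality of $\Sigma_2$ to the flow; the implicit function theorem then makes $\tau$, and hence the transition map, smooth. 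Thus $\cO_\phi$ is a smooth surface, which by Theorem \ref{teo-bifoliated} is $\RR^2$ with its (unique) smooth structure. A deck transformation $\gamma \in \pi_1(M)$ is a diffeomorphism of $\mt$ (a lift of $\mathrm{id}_M$ through the smooth covering $\mt \to M$) that preserves $\tilde X$ and so commutes with $\tilde\phi_t$, hence descends to a bijection $\gamma_*$ of $\cO_\phi$; since $\gamma\circ\psi$ is again a flow box whenever $\psi$ is, $\gamma_*$ is read as the identity in suitable charts of the atlas, so it is a diffeomorphism, and $\pi_1(M)$ acts on $\cO_\phi$ by diffeomorphisms. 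Finally, in one of these charts $\Sigma$ the trace $\cF^{ws}\cap\Sigma$ is a codimension–one foliation of $\Sigma$ that is $C^1$ (because $\cF^{ws}$ is $C^1$ and contains the flow lines, while $\Sigma$ is a smooth transversal to the flow), so the projected foliations $\cG^s,\cG^u$ are transverse $C^1$ foliations of the smooth surface $\cO_\phi$, both preserved by the smooth $\pi_1(M)$–action — this is the ``better regularity'' asserted.

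The main obstacle is really the cited $C^1$–regularity of the weak foliations: that is the genuinely hard analytic fact, and I would rely on it rather than reproving it. Within the construction itself, the one substantive point is the smoothness of the first–passage time function between two smooth transversals, i.e. that sliding along a smooth flow from one local section to another is a smooth operation — this is where smoothness (not merely $C^1$–ness) of $\phi_t$ is used, via the implicit function theorem and transversality. One should also not overlook the need to check that the flow–box sections descend to genuine charts, which rests on the absence of periodic orbits of $\tilde\phi$ in $\mt$ and, implicitly, on the Hausdorffness of $\cO_\phi$ from Theorem \ref{teo-bifoliated}.
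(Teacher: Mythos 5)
Your overall route is the one the remark has in mind: the paper offers no argument beyond the assertion itself, and your two-step decomposition (cite the classical fact that the weak foliations of a smooth $3$-dimensional Anosov flow are $C^1$ with H\"older derivatives, then build a smooth atlas on $\cO_\phi$ out of smooth local transversals, with transition maps given by first-passage times via the implicit function theorem, the deck group acting by diffeomorphisms, and $\cG^s,\cG^u$ being the $C^1$ traces of the weak foliations on smooth transversals) is exactly the intended content, and those parts are carried out correctly.

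The one step that does not work as written is the injectivity of $q=\pi|_\Sigma$. Your limiting argument tacitly assumes that the return times of the offending pairs stay bounded: if $x_n$ and $x_n'=\tilde\phi_{t_n}(x_n)$ both tend to $p$ with $t_n\to+\infty$, no subsequence produces a periodic point, and proper embeddedness of the single orbit through $p$ does not exclude this kind of recurrence of nearby orbits --- a general smooth flow on $\RR^3$ can perfectly well have orbits returning to arbitrarily small transverse disks, so some genuinely Anosov input is needed here, and Hausdorffness of $\cO_\phi$ alone is not literally the statement you are using. The standard fact that closes the gap, implicit in the Barbot--Fenley theory surrounding Theorem \ref{teo-bifoliated}, is that $\pi:\mt\to\cO_\phi$ is a locally trivial (hence trivial) $\RR$-fibration, so that small transversals are local sections; alternatively, one can argue with tools already quoted in the paper, namely the Novikov-type fact recalled at the beginning of \S\ref{s.barbotfenley} that in $\mt$ no transversal meets a leaf of $\wfws$ twice, combined with a Poincar\'e--Bendixson argument inside a plane leaf of $\wfws$ (in the spirit of Lemma \ref{lem-gromov}) to rule out an orbit crossing the trace $\Sigma\cap L$ twice. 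With that point repaired, the remaining steps --- openness of $q$, smoothness of the transitions and of the deck action, and $C^1$-ness of the projected foliations --- are fine.
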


The bifoliated plane allows a reduction of dimension in the understanding of Anosov flows: from a flow in 3-dimensions, we get a group action on a plane with two transverse foliations (which behave somewhat as 1-dimensional objects). For this to actually make sense, one needs the following result which essentially says that up to orbit equivalence, the bifoliated plane does not loose any information (see \cite{Barbot-1mfd} for a proof): 

\begin{teo}\label{teo-bifolclas}
Assume that $\phi^1_t, \phi^2_t: M \to M$ are Anosov flows with bifoliated planes $(\cO_1, \cG^s_1, \cG^u_1)$ and $(\cO_2, \cG^s_2, \cG^u_2)$. Then, $\phi^1_t$ and $\phi^2_t$ are orbitally equivalent by an orbit equivalence homotopic to identity if and only if there is a $\pi_1(M)$-equivariant homeomorphism $H: \cO_1 \to \cO_2$ (i.e. we have that for every $\gamma \in \pi_1(M)$ and $x \in \cO_1$ it holds $H(\gamma x)= \gamma H(x)$). 
\end{teo}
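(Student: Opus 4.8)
The plan is to move freely between orbit equivalences of $M$ homotopic to the identity and $\pi_1(M)$-equivariant self-homeomorphisms of $\mt$ carrying oriented orbits to oriented orbits, using that $\mt\cong\RR^3$ is contractible, so that $M$ is aspherical. The forward implication I would dispatch quickly: given an orbit equivalence $h\colon M\to M$ between $\phi^1_t$ and $\phi^2_t$ homotopic to the identity, lift it to $\tilde h\colon\mt\to\mt$; since $h$ induces the identity on $\pi_1(M)$, a suitable lift commutes with deck transformations, and since $h$ sends oriented $\phi^1_t$-orbits to oriented $\phi^2_t$-orbits, $\tilde h$ sends oriented $\tilde\phi^1_t$-orbits to oriented $\tilde\phi^2_t$-orbits. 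Hence $\tilde h$ descends to a $\pi_1(M)$-equivariant homeomorphism $H\colon\cO_1\to\cO_2$, which moreover carries $\cG^\sigma_1$ to $\cG^\sigma_2$ for $\sigma=s,u$, because by Theorem~\ref{teo.stableman} the weak stable and weak unstable foliations are characterized through forward, resp.\ backward, asymptoticity of orbits, a property preserved by orbit equivalences since $M$ is compact.

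For the converse --- the substantial direction --- given a $\pi_1(M)$-equivariant homeomorphism $H\colon\cO_1\to\cO_2$, my plan is to ``thicken'' it to a $\pi_1(M)$-equivariant homeomorphism $\hat h\colon\mt\to\mt$ taking oriented $\tilde\phi^1_t$-orbits to oriented $\tilde\phi^2_t$-orbits and inducing $H$ on the quotients; such an $\hat h$ descends to an orbit equivalence $h$, and $h\simeq\mathrm{id}$ because $\hat h$ is equivariant for the identity automorphism of $\pi_1(M)$ and $M$ is aspherical. To build $\hat h$ I would use that, by Theorem~\ref{teo-bifoliated}, each projection $p_i\colon\mt\to\cO_i$ is a fibre bundle with fibre $\RR$ over $\cO_i\cong\RR^2$, hence trivial; a choice of trivialization identifies $\mt\cong\cO_i\times\RR$ with $\tilde\phi^i_s(\xi,t)=(\xi,t+s)$ and with $\pi_1(M)$ acting by $\gamma\cdot(\xi,t)=(\gamma\xi,\,t+c_i(\gamma,\xi))$ for a continuous $\RR$-valued cocycle $c_i$ over the action on $\cO_i$ --- this cocycle is a coboundary exactly when $\phi^i_t$ admits a global cross-section, i.e.\ is a suspension, so in general it is non-trivial, and this is precisely why no ``obvious'' candidate for $\hat h$ is equivariant. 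In these coordinates I would seek $\hat h$ of the form $(\xi,t)\mapsto(H\xi,\,g_\xi(t))$ with $g_\xi\in\Homeo^+(\RR)$ depending continuously on $\xi$: any such map is automatically a homeomorphism of $\mt$ sending oriented $\tilde\phi^1_t$-orbits to oriented $\tilde\phi^2_t$-orbits and inducing $H$ on orbit spaces, and it is $\pi_1(M)$-equivariant if and only if $g_{\gamma\xi}=T_{c_2(\gamma,H\xi)}\circ g_\xi\circ T_{-c_1(\gamma,\xi)}$ for all $\gamma\in\pi_1(M)$ and $\xi\in\cO_1$, where $T_a$ denotes translation by $a$ on $\RR$.

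The main obstacle is therefore to produce a continuous family $(g_\xi)_{\xi\in\cO_1}$ satisfying this twisted equivariance, and this is the only step that genuinely uses compactness of $M$ rather than just the group acting on the plane. Here my plan is to observe that the twisting operation $g\mapsto T_{c_2(\gamma,H\xi)}\circ g\circ T_{-c_1(\gamma,\xi)}$ respects convex combinations and that $\Homeo^+(\RR)$ is a convex, hence contractible, set, so that starting from any continuous non-equivariant choice one can average its $\pi_1(M)$-translates against a partition of unity subordinate to a locally finite cover of $M$ pulled back to $\mt$ and obtain an equivariant family; equivalently, after fixing a naive thickening, the obstruction to equivariance is an $\RR$-valued group cocycle which is a coboundary by the same averaging argument. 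Once $(g_\xi)$ is constructed, $\hat h$ is the desired equivariant homeomorphism and projecting it to $M$ produces the orbit equivalence homotopic to the identity. I expect the delicate points to be (i) arranging the identifications $\mt\cong\cO_i\times\RR$ compatibly with the flows and checking that the $c_i$ are honest continuous cocycles, and (ii) the bookkeeping in the partition-of-unity averaging, so that the output stays inside $\Homeo^+(\RR)$ fibrewise and varies continuously with $\xi$.
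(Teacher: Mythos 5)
Your reduction to producing a $\pi_1(M)$-equivariant homeomorphism $\hat h$ of $\mt$ over $H$, and your forward direction, are fine, and your overall strategy is in the spirit of the paper's proof, which proceeds in two steps: first produce (via Haefliger's theorem) a homotopy equivalence of $M$ homotopic to the identity mapping orbits of $\phi^1_t$ to orbits of $\phi^2_t$, and then promote it to an actual homeomorphism by the classical averaging trick of Verjovsky--Ghys--Barbot. The genuine gap is in your key averaging step, which is supposed to output an equivariant family $(g_\xi)_{\xi\in\cO_1}$ of elements of $\Homeo^+(\RR)$. For the output to have the form $\hat h(\xi,t)=(H\xi,g_\xi(t))$, the weights of your convex combination must depend only on $\xi\in\cO_1$; but a partition of unity pulled back from $M$ lives on $\mt$, not on $\cO_1$: its functions are not constant along flow lines, and they cannot be pushed down to $\cO_1$ equivariantly because the $\pi_1(M)$-action on $\cO_1$ is very far from properly discontinuous (points over periodic orbits have nontrivial stabilizers and, in the transitive case, there are dense $\pi_1(M)$-orbits), so no locally finite equivariant partition of unity indexed over $\cO_1$ exists, and integrating the lifted bump functions along the noncompact flow lines cannot be normalized. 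If instead you allow the weights to depend on the full point $x\in\mt$, the averaging does go through (translation along target orbits is affine and commutes with deck transformations), and you obtain a continuous equivariant map $\hat h\colon\mt\to\mt$ sending each orbit into the correct target orbit and inducing $H$; but a convex combination of increasing homeomorphisms of $\RR$ with weights that vary along the orbit parameter need not be increasing (take $f_1(t)=t$, $f_2(t)=t+10$ and let the weight shift from $f_2$ to $f_1$ over a unit interval), so the result is in general not injective on orbits, hence not a homeomorphism. The assertion that the output ``stays inside $\Homeo^+(\RR)$ fibrewise'' is exactly the point that fails.

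What this corrected version of your averaging produces is precisely the first step of the paper's argument: an orbit-preserving equivariant map, i.e.\ a homotopy equivalence of $M$ homotopic to the identity taking orbits to orbits. The missing ingredient is the second step, the averaging \emph{along the flow direction}: replace $\hat h(x)$ by $\tilde\phi^2_{a_T(x)}(\hat h(x))$, where $a_T(x)$ is the average over $s\in[0,T]$ of the flow-time displacement of $\hat h(\tilde\phi^1_s(x))$ relative to $\hat h(x)$, and use compactness of $M$ together with the fact that $\hat h$ induces the homeomorphism $H$ on orbit spaces to get a uniform $t_0$ by which forward displacement is guaranteed, so that for $T$ large the corrected map is strictly monotone on every orbit and still equivariant. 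With that step added, your construction descends to the desired orbit equivalence homotopic to the identity; without it, the proof is incomplete.
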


Note that one can also characterize general (not necessarily homotopic to identity) orbit equivalences by letting some automorphism of $\pi_1(M)$ act on the equivariace condition. See \cite{BFrM} for more details.

\begin{proof}[Sketch of the proof]

The key point is to produce an homotopy equivalence of $M$ (homotopic to identity) which maps orbits of $\phi^1_t$ to orbits of $\phi_t^2$. Once this is obtained, there is a classical averaging trick (see \cite{Verjovsky,Ghys,Barbot-1mfd}) that allows to promote this homotopy equivalence to an actual homeomorphism which gives the desired orbit equivalence. The existence of the homotopy equivalence is given by a general result of Heafliger \cite{Hae}. 

\end{proof}

\begin{figure}[ht]
\begin{center}
\includegraphics[scale=0.55]{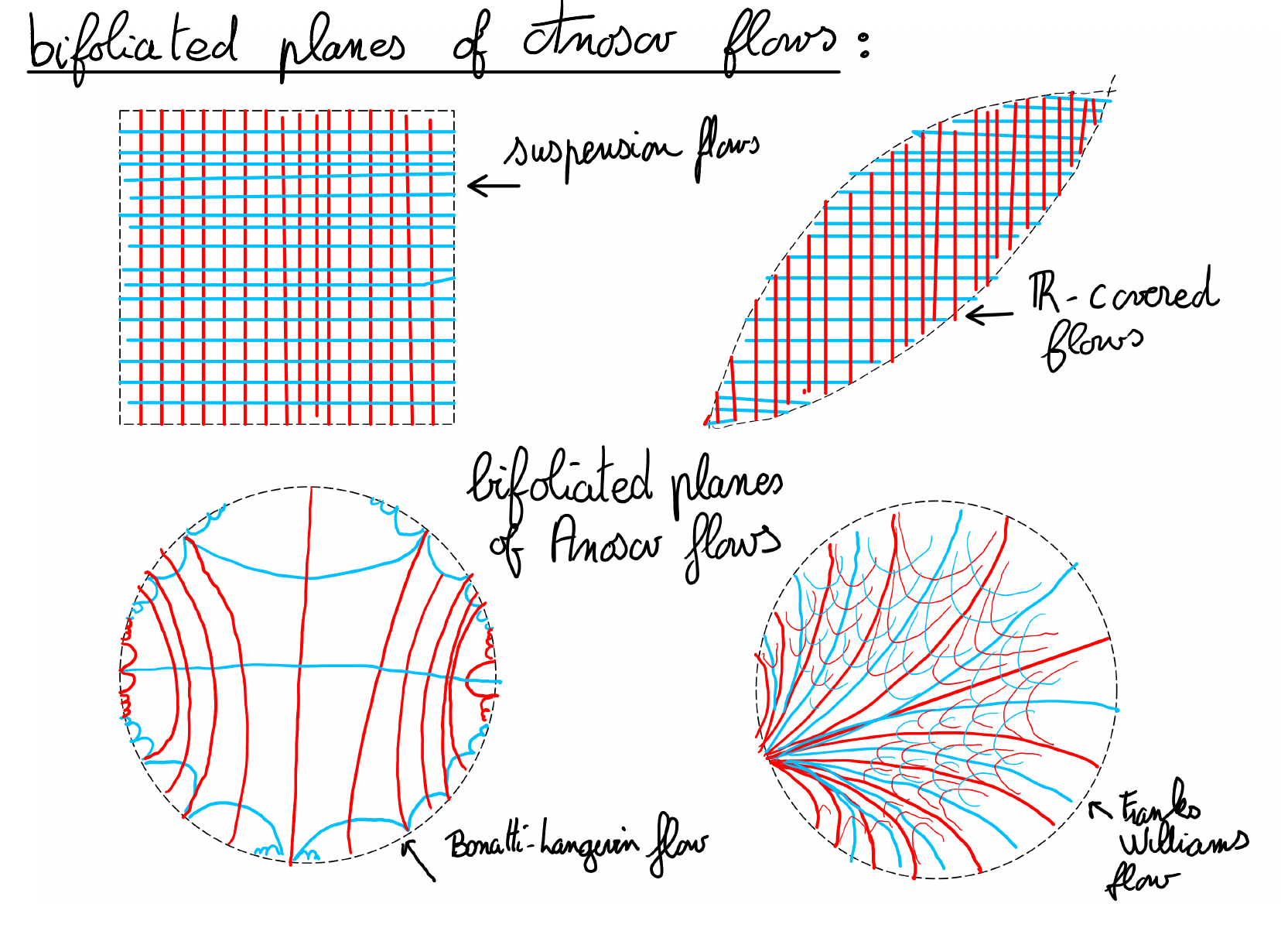}
\begin{picture}(0,0)
\end{picture}
\end{center}
\vspace{-0.5cm}
\caption{{\small The bifoliated plane for some examples. (Figure by Christian Bonatti.)}\label{f.bifol}}
\end{figure}

We get some simple properties of the action that we sumarize here (we refer the reader to \cite{BFrM,BBM} for an abstraction of actions in a bifoliated plane):  

\begin{prop}\label{prop-propbifol}
The action of $\pi_1(M)$ in $(\cO_\phi, \cG^s, \cG^u)$ has the following properties: 
\begin{itemize}
\item If $\gamma \in \pi_1(M) \setminus \{\mathrm{id}\}$ verifies that $\gamma \ell = \ell$ for some $\ell \in \cG^s$, then, there is (a unique) $x \in \ell$ so that $\gamma x = x$. Moreover, if $\ell' \in \cG^u$ is the leaf through the point $x$ the action of $\gamma$ is contacting in one of $\ell, \ell'$ and repelling on the other (it is a saddle point). 
\item If $\phi_t$ is transitive, then, the set of points fixed by some non trivial element of $\pi_1(M)$ is dense in $\cO_\phi$. 
\end{itemize}
\end{prop}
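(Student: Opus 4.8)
\medskip
\noindent\emph{Proof proposal.} I would prove the two items in turn; both are really local statements around a periodic orbit, so the work is in converting the combinatorics of the $\pi_1(M)$-action into Theorem~\ref{teo.stableman}. Assume orientability, as elsewhere. For the first item, suppose $\gamma\in\pi_1(M)\setminus\{\mathrm{id}\}$ fixes $\ell\in\cG^s$. The plan is first to lift: $\ell$ is the image in $\cO_\phi$ of a leaf $L\in\wfws$, and $\gamma\ell=\ell$ forces $\gamma L=L$. Since $\pi_1(M)$ acts freely on $\mt$ and $L$ is simply connected (by the Novikov--Palmeira discussion after Lemma~\ref{lem.leafspace}), $L$ is the universal cover of the $\cF^{ws}$-leaf $\ell_0:=L/\mathrm{Stab}(L)$ of $M$, and the hypothesis $\gamma\in\mathrm{Stab}(L)\setminus\{\mathrm{id}\}$ shows $\ell_0$ is not simply connected; by Theorem~\ref{teo.stableman}(4)--(5) it is then a cylinder carrying a \emph{unique} periodic orbit $o$, with $\mathrm{Stab}(L)=\langle g\rangle\cong\ZZ$ generated by the class of $o$. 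I would also record, using Theorem~\ref{teo.stableman}(2)--(3) and $\dim E^s=1$, that strong stable leaves inside $L$ are lines each meeting every flowline of $\tilde\phi_t$ in $L$ exactly once, so each leaf of $\cG^s$ is homeomorphic to $\RR$ and is a section of the flowlines of $L$.

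For existence of the fixed point, the (unique) lift $\tilde o\subset L$ of $o$ is a single flowline, setwise $\mathrm{Stab}(L)$-invariant, so its image $x\in\ell$ is fixed by $\gamma$. For uniqueness, a second $\gamma$-fixed point $y\in\ell$ would produce a $\gamma$-invariant flowline $\tilde c\subset L$ (distinct flowlines being disjoint); since $\gamma$ acts freely on $\tilde c\cong\RR$ as a nontrivial translation, $\tilde c$ descends to an embedded closed flowline in $\ell_0$, i.e.\ a periodic orbit, which by Theorem~\ref{teo.stableman}(5) must be $o$, so $\tilde c$ is a lift of $o$ in $L$ and therefore equals $\tilde o$, giving $y=x$.

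For the saddle property, the plan is to identify a neighbourhood of $x$ in $\cO_\phi$ with a small flow transversal $D$ at a point $\bar p\in o$, on which $\gamma$ acts as a (signed) power of the Poincar\'e first-return map $P$ of $o$ --- as $P^{\,k}$ or $P^{-k}$ according to whether the generator $g$ of $\mathrm{Stab}(L)$ has $\gamma=g^{\pm k}$, i.e.\ according to the flow orientation of $o$ relative to $\gamma$. On $D$, the foliations $\cG^s$ and $\cG^u$ trace out the local weak stable and weak unstable manifolds of $\bar p$, so these germs are exactly $\ell$ and $\ell'$. Since orbits in $\cF^{ws}$ are forward asymptotic and orbits in $\cF^{wu}$ backward asymptotic, $P$ is a topological contraction on $W^s_{\mathrm{loc}}(\bar p)$ and a topological expansion on $W^u_{\mathrm{loc}}(\bar p)$, and $P^{-1}$ does the opposite; hence $\gamma$ is contracting on exactly one of $\ell,\ell'$ and repelling on the other. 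The main obstacle is precisely this step: carefully matching the deck transformation with a power of the return map, and making sure the contraction/expansion alternative survives the merely H\"older regularity of the weak foliations; the clean way to sidestep the regularity issue is to pass, via Theorem~\ref{sstability} and Theorem~\ref{teo-bifolclas}, to an orbit-equivalent smooth Anosov flow, where the statement is immediate from Theorem~\ref{teo.stableman}(6). Everything else is bookkeeping with covers.

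For the second item, by the first item the points of $\cO_\phi$ fixed by some nontrivial element of $\pi_1(M)$ are precisely the projections to $\cO_\phi$ of lifts of periodic orbits of $\phi_t$. If $\phi_t$ is transitive, its nonwandering set is all of $M$, so by the closing/shadowing lemma for hyperbolic flows (as already used in \S\ref{s.barbotfenley}) periodic orbits are dense in $M$; their full preimage is then dense in $\mt$, and since $\mt\to\cO_\phi$ is continuous, open and surjective, the set of periodic-orbit points is dense in $\cO_\phi$, which is the asserted density.
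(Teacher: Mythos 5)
Your proposal is correct, and it fills in exactly the kind of argument the paper has in mind: the paper gives no proof, stating only that the properties ``follow mostly from the dynamical properties of Anosov flows (c.f.\ [FiH])'' and leaving them as an exercise, and your route (cylindrical leaves with a unique core periodic orbit and cyclic stabilizer for existence/uniqueness of the fixed point, the Poincar\'e return map for the saddle behavior, and the Anosov closing lemma plus openness of $\mt\to\cO_\phi$ for density) is that standard argument. One small remark: the detour through Theorem~\ref{sstability} to dodge regularity issues is not needed, since the contraction/expansion of the return map on the local weak stable/unstable sets is a purely topological statement (forward/backward asymptoticity of flowlines), and together with the uniqueness of the fixed point on $\ell$ it already gives the global saddle behavior on the leaves.
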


\begin{proof}
These are properties which follow mostly from the dynamical properties of Anosov flows (c.f. \cite{FH}). We leave this as an exercise. 
\end{proof}

Now we are ready to show: 

\begin{proof}[Sketch of the proof of Proposition \ref{prop-transversetorus}] 
The transverse torus lifts to a plane transverse to the orbits of the flow due to the fact that it has to be incompressible (recall Theorem \ref{teo.brunella}) and it therefore can be embedded as a plane $\hat T$ inside the orbit space $\cO_\phi$. By the fact that not every orbit intersects the torus, we know that $\hat T \neq \cO_\phi$. If inside $\hat T$ we have non-separated leaves of $\cG^s$ or $\cG^u$ we are done. Else, one can show that inside $\hat T$ the foliations have a global product structure (because transverse foliations in tori without Reeb-annuli must have global product structure). Since the plane $\hat T$ does not cover the full orbit space, one can consider a sequence $\ell_n$ of leaves of, say $\cG^u$ escaping to infinity. These leaves cannot converge to a unique leaf, otherwise, the global product structure of $\hat T$ could be extended further to a maximal plane invariant under the $\ZZ^2$ which preserves $\hat T$. This shows that the leaf space cannot be Hausdorff. 
\end{proof}

Recently, some understanding on more precise effect of the existence of transverse tori in the bifoliated plane has been gained, but as far as I understand, there are still some things to understand further. See \cite{BFeM,BBM}. 

\subsection{Dichotomy for $\RR$-covered Anosov flows} 

The following beautiful result can only be followed with some pen and paper at hand. 

\begin{teo}[Barbot-Fenley]\label{teo.rcoveredboth}
Let $\phi_t: M \to M$ be an Anosov flow. Then, one has that $\cL^s \cong \RR$ if and only if $\cL^u \cong \RR$.
\end{teo}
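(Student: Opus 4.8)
The plan is to work entirely inside the bifoliated plane $(\cO_\phi,\cG^s,\cG^u)$ provided by Theorem \ref{teo-bifoliated}, and to argue by contraposition: assuming that $\cL^s\not\cong\RR$, I will produce non-separated leaves in $\cG^u$, which forces $\cL^u\not\cong\RR$ as well; by symmetry this gives the equivalence. The first step is to translate non-Hausdorffness of $\cL^s$ into the bifoliated plane. If $\cL^s$ is not homeomorphic to $\RR$, then there are two leaves $L,F\in\cG^s$ which are non-separated: a sequence $L_n\in\cG^s$ converging simultaneously to $L$ and to $F$ (on, say, the positive side of each). Fix a leaf $u_0\in\cG^u$ crossing $L$, so $u_0$ determines a point $x_0=L\cap u_0$; since the $L_n$ accumulate on $L$, for large $n$ the unstable leaf $u_0$ also meets $L_n$, giving points $x_n=L_n\cap u_0\to x_0$ along $u_0$.

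The heart of the argument is to follow these $L_n$ across to $F$ with the \emph{unstable} foliation and see that the unstable leaves through a suitable second family of points fail to separate. Pick a point $y\in F$; because $L_n\to F$, for large $n$ there is a point $y_n\in L_n$ with $y_n\to y$. Now consider the unstable leaves $v_n=\cG^u(y_n)$. Each $v_n$ crosses $L_n$ at $y_n$. The claim is that the $v_n$ cannot converge to a single unstable leaf: if they did, that limit unstable leaf $v$ would have to cross both $L$ and $F$ (as a limit of leaves crossing $L_n\to L$ and $L_n\to F$), but a single leaf of $\cG^u$ meets a leaf of $\cG^s$ in at most one point of $\cO_\phi$, and meeting both $L$ and $F$ — which are non-separated, hence on ``opposite sides'' of the branching — would force the $L_n$ between $L$ and $F$ to be crossed by $v$ in a way incompatible with $v_n\to v$ and $y_n\to y\in F$, $x_n\to x_0\in L$ lying on distinct unstable leaves for $n$ large. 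Spelling out this incompatibility carefully — using that $x_n$ and $y_n$ both lie on $L_n$ but on the two different unstable leaves $u_0$ and $v_n$, and that the nesting/betweenness structure of non-separated leaves described in the bulleted remarks of \S\ref{s.barbotfenley} is preserved under the accumulation — is the crux. The conclusion is that $\{v_n\}$ (or an appropriate subsequence, together with the fixed leaf $u_0$ as a comparison) exhibits two non-separated leaves of $\cG^u$, so $\cL^u$ is not Hausdorff, hence not homeomorphic to $\RR$.

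The main obstacle I expect is precisely this last geometric step: ruling out that the unstable leaves $v_n$ converge to a unique leaf, and more generally controlling how the branching locus of $\cG^s$ interacts with $\cG^u$ in the plane. One must use that $L$ and $F$ being non-separated in $\cG^s$ means every path in $\cO_\phi$ from a point of $L$ to a point of $F$ must cross the $L_n$ for all large $n$ (the ``betweenness'' remark), and combine this with the product structure of the bifoliation in any small foliated box to pin down on which side of each $L_n$ the points $x_n,y_n$ sit. A clean way to organize this is to pass to the leaf spaces directly: the unstable foliation $\cG^u$ restricted to the union $\bigcup_n \overline{L_n}$-region induces, via holonomy along stable leaves, a partial identification of $\cL^u$-germs at $u_0$ and at $v_n$, and the failure of holonomy to extend across the branching of $\cL^s$ is exactly the source of branching in $\cL^u$. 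Once the non-separated pair in $\cG^u$ is exhibited, no further work is needed: Hausdorffness of a simply connected $1$-manifold is equivalent to being homeomorphic to $\RR$, which is the content we already invoked after Lemma \ref{lem.leafspace}.
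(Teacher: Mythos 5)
Your overall strategy (argue the contrapositive: branching in $\cL^s$ forces branching in $\cL^u$) is logically admissible, but the crux step is unjustified and, as stated, false. From $y_n\to y\in F$ you certainly get $v_n=\cG^u(y_n)\to\cG^u(y)$, and nothing in the bifoliated-plane topology forces the $v_n$ to accumulate on a second unstable leaf: the inference ``a limit of leaves crossing $L_n$ must cross the limits of $L_n$'' is exactly what fails in the presence of perfect fits ($\cG^u(y)$ may simply make a perfect fit with, or stay away from, $L$). In fact no purely plane-topological argument of the kind you propose can work: one can build a pair of transverse foliations of $\RR^2$ in which $\cG^u$ is the foliation by vertical lines (leaf space $\RR$, Hausdorff) while $\cG^s$ has two non-separated leaves $L,F$ --- for instance take $L=\{y=-1/x,\ x<0\}$, $F=\{y=1/x,\ x>0\}$, foliate the region below $L\cup F$ by graphs over all of $\RR$ accumulating on both, and the region above by $U$-shaped graphs over bounded intervals. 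In such a model your sequence $v_n$ converges to the single vertical leaf through $y$ and no unstable branching appears. So the implication you need is not a consequence of Theorem \ref{teo-bifoliated} plus the betweenness remarks; it genuinely requires the $\pi_1(M)$-action and the dynamics (density of leaves fixed by deck transformations, uniqueness of the fixed point of a nontrivial deck transformation in a fixed leaf, minimality in the transitive case), and your closing remark about ``failure of holonomy to extend across the branching'' gestures at this without supplying the mechanism.

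For comparison, the paper argues in the opposite direction: assuming $\cL^s\cong\RR$, it uses Proposition \ref{prop-rcovtransitive} to get transitivity, hence minimality of the $\pi_1(M)$-action on $\cL^s$ and density of points of $\cO_\phi$ fixed by deck transformations, and then analyzes each unstable leaf as the graph of a function over an interval of $\cL^s$, ruling out the bad asymptotic shapes by fixed-point arguments; that dynamical input is what is missing from your sketch. If you insist on your direction (non-Hausdorff $\cL^s$ implies non-Hausdorff $\cL^u$), the available route goes through Fenley's branching structure results (Theorem \ref{teobranching} and the lozenge/adjacent-lozenge analysis of \S\ref{s.nonrcov}), whose proofs descend to $M$ and use recurrence; note also that the adjacent lozenges produced from non-separated stable leaves share \emph{unstable} sides, so extracting unstable branching from them is essentially equivalent to the theorem itself rather than a shortcut around it.
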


\begin{proof}
Let us assume that $\cL^s \cong \RR$. Note that by Proposition \ref{prop-rcovtransitive} implies that the action of $\pi_1(M)$ on $\cL^s$ is minimal.

We have shown in Theorem \ref{teo-bifoliated} that $\cO_\phi$ is a bifoliated plane, so, up to a change of coordinates (homeomorphism of the plane), we can assume that $\cG^s$ (i.e. the projection of $\cL^s$ to $\cO_\phi$) is the foliation by vertical lines $\cG^s = \{x = t\}_{t\in \RR}$. Note also that the transtivity of $\phi_t$ also implies density of periodic orbits by the classical Anosov closing lemma, so, we get that the set of points in $\cO_\phi$ which are fixed by some deck transformation is dense in $\cO_\phi$.  In these coordinates, we have that every leaf $\ell \in \cG^u$ is the graph of a function from an open interval (possibly unbounded) $I_\ell = (a_\ell, b_\ell)$ (where $a_\ell \in \RR \cup \{-\infty\}$ and $b_\ell \in \RR \cup \{+\infty\}$) and $I_\ell$ corresponds in $\cL^s$ to the set of leaves that $\ell$ intersects. We will denote by $f_\ell: I_\ell \to \RR$ the function of which $\ell$ is the graph. Note that if $a_\ell$ or $b_\ell$ are finite, then, the function $f_\ell$ must tend to $\pm \infty$ when $x \to a_\ell, b_\ell$.   

We will freely use the properties from Proposition \ref{prop-propbifol} without making reference to them. 

{\bf Case 1: } Consider first the case where there is a leaf $\ell \in \cG^u$ whose projection in the second coordinate is unbounded (i.e. it intersects every leaf in a ray of $\cL^s$, or equivalently, that either $a_\ell = -\infty$ or $b_\ell = \infty$). We will show that in this case, every leaf $\ell' \in \cG^u$ intersects every leaf of $\cG^s$, in particular, leaves of $\cG^s$ intersect every leaf of $\cG^u$ which implies that $\cG^u$ is $\RR$-covered as desired. 

To see this let us assume without loss of generality that $\ell \subset \cO_\phi \cong \RR^2$ verifies that $b_\ell = +\infty$. Note that the set of leaves with this property is invariant under the action of $\pi_1(M)$, so, given $\ell' \in \cG^u$ we can find sequences $\gamma_n, \eta_n \in \pi_1(M)$ so that $\gamma_n \ell \to \ell'$ and $\eta_n \ell \to \ell'$ from the left and the right (recall that every leaf $\ell' \in \cG^u$ separates $\cO_\phi$ in two components and there is an induced orientation), it means that the graphs of $\ell_n = \gamma_n \ell$ and $\hat \ell_n = \eta_n \ell$ are above and below the graph of $\ell'$, or equivalently, that in their domains, one has that $f_{\hat \ell_n} < f_{\ell'} < f_{\ell_n}$. Since $b_{\hat \ell_n} = b_{\ell_n}= + \infty$ we have that in any compact interval, the value of $f_{\ell'}$ is bounded and therefore, $b_{\ell'}$ must also be $+\infty$. 

This implies that for every $\ell' \in \cG^u$ we have that $b_{\ell'}=\infty$. Now, let us show that there must be some leaf $\ell' \in \cG^u$ so that $a_{\ell'}= -\infty$, then, applying the same argument we will deduce that every leaf $\ell \in \cG^u$ verifies that $I_\ell = \RR$ and this gives that every leaf of $\cG^s$ intersects every leaf of $\cG^u$ (and thus $\cG^u$ is $\RR$-covered). 

We can thus assume by contradiction that every leaf $\ell$ of $\cG^u$ verifies $a_\ell \in \RR$. Note that one has that $a_{\gamma \ell} = \gamma a_\ell$ (here, we are considering $\gamma a_\ell$ to be a leaf in $\cL^s$ considering $a_\ell$ not only as a real number but also as a leaf of $\cG^s$). In particular, there are leaves $\ell_n$ so that $a_{\ell_n} \to -\infty$.  Consider an element $x \in \cO_\phi$ such that $\gamma x = x$ for some $\gamma \in \pi_1(M) \setminus \{\mathrm{id}\}$. If $x \in \ell \in \cG^u$ we have that $a_\ell$ (as a leaf of $\cG^s$) is fixed by $\gamma$ and thus also has a fixed point $y$ which belongs to a leaf $\ell' \in \cG^u$. Note that by assumption, we have that $b_{\ell'} = \infty$ and thus, this implies that $\ell'$ intersects the leaf of $\cG^s$ containing $x$, and since the unstable of $x$ does not intersect $a_\ell$ we get that the intersection takes place in a point different from $x$, which produces two fixed points in $\ell'$ for $\gamma$ a contradiction. This completes the proof in case 1. 

\smallskip

{\bf Case 2:} The functions $a_\ell$ and $b_{\ell}$ are bounded for every $\ell$. 

We first assume that there is a leaf $\ell \in \cG^u$ such that $f_\ell(c)$ tends to $+\infty$ as $c \to b_\ell$ and $f_\ell(c)$ tends to $-\infty$ as $c \to a_\ell$. We will give a name to this property and say that $\ell$ is \emph{increasing} (the case where there is some $\ell$ which is \emph{decreasing} is symmetric). In this case, we claim that every $\ell' \in \cG^u$ is increasing. To show this, we need to exclude the case where $f_{\ell'} (c) \to -\infty$ as $c \to b_{\ell'}$ (a symmetric argument will exclude the possibility that $f_{\ell'}(c) \to + \infty$ as $c \to a_{\ell'}$). 

To do this, we use the minimality of the action of $\pi_1(M)$ on the stable leaf space $\cL^s$ and the fact that we can assume that the action is orientation preserving (maybe after considering a finite index subgroup) so, for all $\gamma \in \pi_1(M)$ the image $\gamma \ell$ of $\ell$ is also increasing. Now, using the minimality of the action of $\pi_1(M)$ we can choose $\gamma$ so that $\gamma a_\ell \in (a_{\ell'}, b_{\ell'})$ but since $\gamma \ell$ is increasing, and  $f_{\ell'} (c) \to -\infty$ as $c \to b_{\ell'}$ we get that $\gamma \ell \cap \ell \neq \emptyset$ a contradiction. 

Now, we need to exclude the case where no leaf is increasing or decreasing, that is, every leaf $\ell$ verifies that either $f_{\ell}(c) \to + \infty$ as $c \to a_\ell$ and as $c\to b_\ell$ or $f_\ell(c) \to - \infty$ as $c \to a_\ell$ and as $c\to b_\ell$. Without loss of generality, assume that there is a leaf $\ell$ such that $f_{\ell}(c) \to + \infty$ as $c \to a_\ell$ and as $c\to b_\ell$, we will say that $\ell$ is $U$-\emph{shaped}.  

Using density of periodic points, we can find point $p$ so that $\gamma p = p$ for some $\gamma \in \pi_1(M) \setminus \{\mathrm{id}\}$ and so that $\ell_p = \cG^u(p)$ is \emph{above} $\ell$. Being above means that $a_\ell < c_p=\cG^s(p) < b_\ell$ and $f_{\ell_p}(c_p) > f_\ell (c_p)$.

\begin{figure}[ht]
\begin{center}
\includegraphics[scale=0.85]{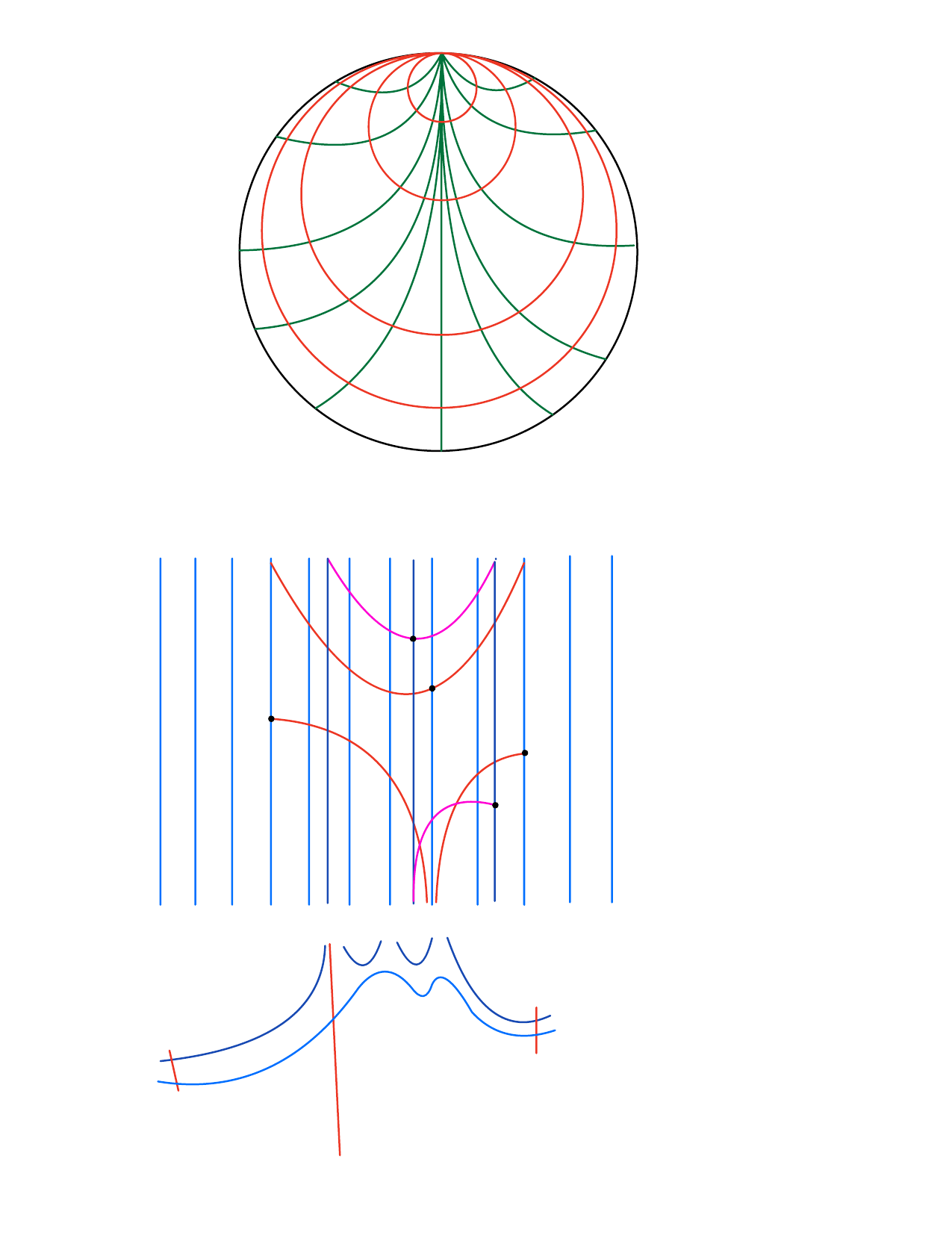}
\begin{picture}(0,0)
\put(-132,125){{\small $p =\gamma p$}}
\put(-85,190){ $\ell_p$}
\put(-152,165){{\small $q =\gamma' q$}}
\put(-105,190){$\ell_q$}
\end{picture}
\end{center}
\vspace{-0.5cm}
\caption{{\small Depiction of the argument in case 2}\label{f.rcov}}
\end{figure}

Note that $\gamma \ell_p=\ell_p$ so $\gamma a_{\ell_p}= a_{\ell_p}$ and $\gamma b_{\ell_p}= b_{\ell_p}$. In particular there are points $p_1, p_2$ which are fixed by $\gamma$ in the stable leaves $a_{\ell_p}$ and $b_{\ell_p}$ respectively. We claim that if $\ell_1 = \cG^u(p_1)$ and $\ell_2= \cG^u(p_2)$ we have that $b_{\ell_1} = c_p$ and $a_{\ell_2}= c_p$. To see this, assume that $b_{\ell_p} < b_{\ell_1}< c_p$ (the other case is symmetric) and note that $\gamma b_{\ell_1}= b_{\ell_1}$. But this forces $\ell_p$ to have two distinct fixed points, a contradiction. 

Also, note that $f_{\ell_1}(c) \to -\infty$ as $c \to c_p=b_{\ell_1}$, this is because the point $p_1$ cannot be above $\ell_p$ and thus if $f_{\ell_1}(c) \to +\infty$ as $c \to c_p=b_{\ell_1}$ this would force that $\ell_1 \cap \ell_p \neq \emptyset$. The symmetric argument gives $f_{\ell_2}(c) \to -\infty$ as $c \to c_p=a_{\ell_2}$. 

Now, we use the density of periodic points again to produce a point $q$ above $\ell_p$ which is fixed by some deck transformation $\gamma' \in \pi_1(M)\setminus \{\mathrm{id}\}$. Denote $\ell_q=\cG^u(q)$, $c_q=\cG^s(q)$ as above and we have that again $\ell_q$ is $U$-shaped and that $a_{\ell_p} < a_{\ell_q} < b_{\ell_q} < b_{\ell_p}$. Note also that without loss of generality we can assume that $c_p < c_q$ (note that the strict inequalities come from the fact that $\gamma'$ cannot be in the same cyclic group as $\gamma$ as otherwise some leaf of $\cG^s$ or $\cG^u$ would have more than one fixed point for some non-trivial deck transformation). 

Denote by $q'$  the fixed point of $\gamma'$ in the stable leaf $a_{\ell_q}$. By the argument made above, we know that if we denote $\ell' = \cG^u(q')$ then $b_{\ell'}=c_q$ and that $f_{\ell'}(c) \to -\infty$ as $c \to b_{\ell'}$. This forces $\ell'$ to intersect $\ell_2$ which is a contradiction. This completes the proof.  

\end{proof}

Among $\RR$-covered Anosov flows, there are two classes, product Anosov flows (which correspond to suspensions) and \emph{skewed}-$\RR$-\emph{covered} Anosov flows which we now describe: 

\begin{defi}
An Anosov flow is \emph{skewed}-$\RR$-\emph{covered} if $(\cO_\phi, \cG^s, \cG^u)$ is homeomorphic to the band $\cB=\{(x,y) \in \RR^2 \ : \ x-1 < y < x+1\}$ with the foliations by horizontal and vertical lines. 
\end{defi}

In the proof of Theorem \ref{teo.rcoveredboth} we essentially saw that if $\phi_t$ is an $\RR$-covered Anosov flow, then, it either is skewed $\RR$-covered, or, we have that every pair of leaves of $\cG^s$ and $\cG^u$ have a non empty intersection. It is a consequence of a result of Barbot (see \cite[Theorem 2.7]{Barbot-leaf}) that this implies that the flow is a suspension. 

\subsection{Lozenges} 

There are some configurations in the bifoliated plane that deserve a name. These objects have been introduced in \cite{Fenley} and turned out to be crucial to the study of Anosov flows. 

For a point $x \in \cO_\phi$ and $\ell = \cG^s(x)$ (resp. $\ell = \cG^u(x)$) we define a \emph{half leaf} to be the closure of a connected component of $\ell \setminus \{x\}$. We can say \emph{positive} or \emph{negative} half leaf if we consider a given orientation on the foliations. A \emph{quadrant} for a point $x \in \cO_\phi$ is given by two half leaves $\ell_1, \ell_2$ of $\cG^s(x)$ and $\cG^u(x)$ respectively and consists of the points in the connected component of $\cO_\phi \setminus (\ell_1 \cup \ell_2)$ not containing any other half leaf. The boundaries of the quadrant $Q_x$ of $x$ are the half leaves in the boundary, sometimes, we will refer to the stable and unstable boundary to denote the half leaf in $\cG^s(x)$ and $\cG^u(x)$ respectively. 

\begin{defi}[Lozenges]
Given an Anosov flow $\phi_t : M \to M$ an open set $\cD$ of $\cO_\phi$ is called a \emph{lozenge} with corners $x,y \in \cO_\phi$ if $\cD$ is the intersection of quadrants $Q_x$ and $Q_y$ respectively which verify that:
\begin{itemize}
\item the boundaries of the quadrants $Q_x$ and $Q_y$ are disjoint (called the \emph{sides} of the lozenge $\cD$),
\item for every $z \in \cD$ one has that $\cG^s(x)$ intersects the unstable sides of both $Q_x$ and $Q_y$, and $\cG^u(x)$ intersects the stable sides of both $Q_x$ and $Q_y$.
\end{itemize}
\end{defi}

If we denote the sides of $\cD$ as $A \subset \cG^s(x)$, $B \subset \cG^u(x)$, $C \subset \cG^s(y)$ and $D \subset \cG^u(y)$ we have the following properties (sometimes referred to as existence of \emph{perfect fits}): 

\begin{itemize}
\item $A \cap D = \emptyset$ and $B \cap C = \emptyset$, 
\item for any $\ell \in \cL^s$ one has that $\ell \cap B \neq \emptyset$ if and only if $\ell \cap D \neq \emptyset$,
\item for any $\ell \in \cL^u$ one has that $\ell \cap A \neq \emptyset$ if and only if $\ell \cap C \neq \emptyset$.
\end{itemize}

\begin{figure}[ht]
\begin{center}
\includegraphics[scale=0.55]{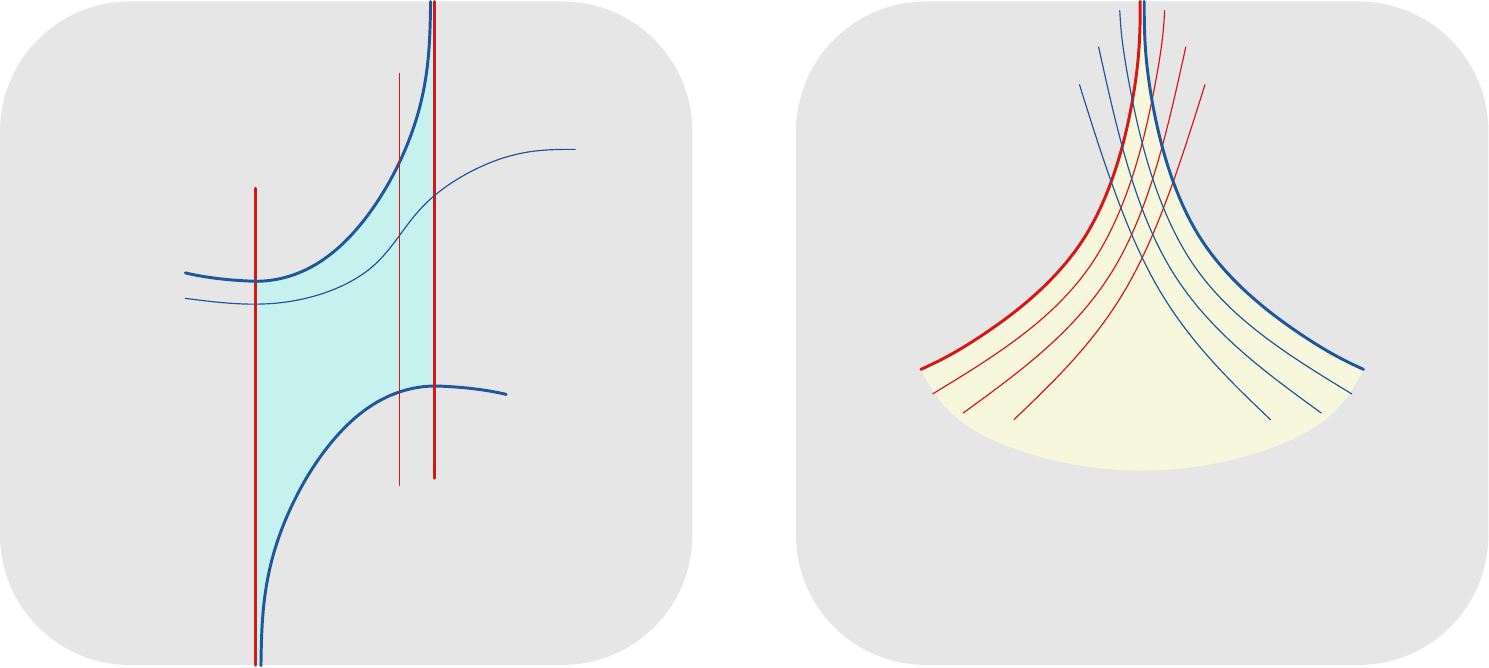}
\begin{picture}(0,0)
\end{picture}
\end{center}
\vspace{-0.5cm}
\caption{{\small On the left, a lozenge. On the right, a perfect fit. (Figure by Theo Marty)}\label{f.lozenge}}
\end{figure}

\begin{obs}
In the proof of Theorem \ref{teo.rcoveredboth} we exploited several times the fact that if a curve $\ell$ of say $\cG^u$ makes a \emph{perfect fit} with some curve $\ell'$ of $\cG^s$, then, if $\ell$ is fixed by $\gamma$ then so is $\ell'$ (under some orientability assumptions). 
\end{obs}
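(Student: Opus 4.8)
The content to be justified is intrinsic to the bifoliated plane $(\cO_\phi,\cG^s,\cG^u)$ of Theorem~\ref{teo-bifoliated}: if $\ell\in\cG^u$ and $\ell'\in\cG^s$ make a perfect fit and $\gamma\in\pi_1(M)$ fixes $\ell$, then $\gamma$ fixes $\ell'$ (the ``orientability assumptions'' meaning that $E^s$ and $E^u$ are oriented, as in the four-fold cover used throughout, and that $\gamma$ respects these orientations). The plan is to reduce this to two ingredients. The first is a \emph{uniqueness} statement: given $\ell\in\cG^u$, one of its two ends $e$, and one of the two components $\Sigma$ of $\cO_\phi\setminus\ell$, there is at most one leaf of $\cG^s$ which makes a perfect fit with $\ell$ along a ray of $\ell$ toward $e$ and lies in $\Sigma$. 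The second is the observation that an orientation-respecting $\gamma$ fixing $\ell$ must fix each end $e$ of $\ell$ (it preserves the orientation of the leaves of $\cG^u$, induced by an orientation of $E^u$) and each component $\Sigma$ of $\cO_\phi\setminus\ell$ (it preserves the coorientation of $\cG^u$, induced by an orientation of $E^s$). Granting both, the argument is immediate: we may assume $\gamma\neq\mathrm{id}$; since the perfect-fit relation is preserved by $\pi_1(M)$, the leaf $\gamma\ell'$ makes a perfect fit with $\gamma\ell=\ell$ toward the same end and on the same side as $\ell'$ does, so $\gamma\ell'=\ell'$ by uniqueness.

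First I would dispatch the soft points. Equivariance of the perfect-fit relation is automatic, because each clause of its definition (disjointness of the two leaves, together with the asymptotic condition, which is phrased in terms of which leaves of $\cL^s$ and $\cL^u$ cross prescribed half-leaves) is a statement about incidence and separation of the leaves of $\cG^s$ and $\cG^u$ in $\cO_\phi$, and $\gamma$ acts on $\cO_\phi$ as a homeomorphism carrying each foliation to itself. The orientation point is equally soft: an orientation of $E^u$ orients the leaves of $\cG^u$ in $\cO_\phi$, an orientation of $E^s$ coorients $\cG^u$, and a deck transformation respecting the flow orientation and these bundle orientations preserves both, hence fixes each of the two ends of the line $\ell$ and each of the two complementary components of $\cO_\phi\setminus\ell$.

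The step that carries the real content, and which I expect to be the main obstacle, is the uniqueness statement, for which I would use the ``perfect-fit rectangle'' picture. A perfect fit of $\ell\in\cG^u$ with $\ell'\in\cG^s$ produces an open region $R\subset\cO_\phi$, homeomorphic to a rectangle with one corner removed, whose four boundary arcs consist of a ray of $\ell$, a ray of $\ell'$ meeting it at the missing (ideal) corner, and two further leaf arcs, and which is swept out both by sub-arcs of $\cG^s$-leaves and by sub-arcs of $\cG^u$-leaves; in particular the stable boundary leaves of $R$ are $\ell'$ and the leaf $\cG^s(p)$ through the finite endpoint $p$ of the $\ell$-side, and $\cG^s(p)$ crosses $\ell$ at $p$. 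Suppose now $\ell_1'\neq\ell_2'$ both made a perfect fit with $\ell$ toward $e$ inside $\Sigma$, with rectangles $R_1,R_2$. Being disjoint leaves of the same foliation of $\cO_\phi\cong\RR^2$, both contained in $\Sigma$ and asymptotic to the same end of $\ell$, one of them --- say $\ell_2'$ --- must lie in $\overline{R_1}$: if instead $\ell_1'$ separated $\ell_2'$ from $\ell$ inside $\Sigma$, it would block the $\cG^s$-leaves sweeping out $R_2$ from reaching the $\ell$-side of $R_2$. But a leaf of $\cG^s$ contained in $\overline{R_1}$ is either one of the two stable boundary leaves of $R_1$ or else crosses $\ell$; neither is compatible with $\ell_2'$ being distinct from $\ell_1'$ and disjoint from $\ell$, and this contradiction gives uniqueness. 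The delicate part is making the ``lies in $\overline{R_1}$ or symmetrically'' alternative rigorous and disposing of the non-separated configuration of $\ell_1'$ and $\ell_2'$, where the structure theory of non-separated leaves of Anosov foliations (cf.\ \cite{Fenley-branch}) is the appropriate tool; once that is in place, the remark follows.
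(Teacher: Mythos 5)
Your reduction (equivariance of the perfect-fit relation, plus ``$\gamma$ fixes the end and the side'', plus uniqueness of the perfect-fit partner for a given ray and side) is the right skeleton and is essentially what the paper implicitly uses. The genuine gap is in the uniqueness lemma, which you correctly identify as carrying all the content but do not prove: the dichotomy ``either $\ell_2'$ lies in $\overline{R_1}$ or $\ell_1'$ separates $\ell_2'$ from $\ell$'' is not exhaustive, and the case it misses is the \emph{only} one that can actually occur. Indeed, both candidates $\ell_1',\ell_2'$ are accumulation leaves of the family $\cG^s(x)$ as $x\to e$ along the ray $A$ (this is part of what a perfect fit means), so if they are distinct they are automatically non-separated from each other; in that configuration neither separates the other from $\ell$, and typically $\ell_2'\cap\overline{R_1}=\emptyset$, so both branches of your alternative fail. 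Deferring precisely this case to ``the structure theory of non-separated leaves (cf.\ \cite{Fenley-branch})'' without an argument leaves the key step unestablished, so as written the proposal does not prove the remark.

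Two ways to close it. First, for the purpose the remark actually serves (the proof of Theorem \ref{teo.rcoveredboth}), one is in the $\RR$-covered setting: $\cL^s\cong\RR$ is Hausdorff, there are no non-separated stable leaves, and the perfect-fit partner of $\ell\in\cG^u$ is simply the endpoint $a_\ell$ (or $b_\ell$) of the interval $I_\ell\subset\cL^s$ of stable leaves crossed by $\ell$; since $I_{\gamma\ell}=\gamma I_\ell$, one gets $a_{\gamma\ell}=\gamma a_\ell$, and orientation-preservation fixes each endpoint — this is the one-line argument the paper uses, and no rectangle analysis is needed. Second, if you want uniqueness in general, the non-separated case can be dispatched elementarily rather than by citation: take an unstable leaf $u$ through a point of the ray of $\ell_2'$ close enough to the ideal corner; by the product structure of the perfect-fit rectangle $R_1$ of $(\ell,\ell_1')$ (which is a one-sided neighborhood of the subray of $A$ on the side $\Sigma$), $u$ enters $R_1$ and hence crosses $\ell_1'$, so $u$ meets both $\ell_1'$ and $\ell_2'$. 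But in $\cO_\phi$ a stable and an unstable leaf meet in at most one point (Theorem \ref{teo-bifoliated}), so the stable leaves $\cG^s(x_n)$, $x_n\to e$ along $A$, would cross $u$ in a single point converging simultaneously to $u\cap\ell_1'$ and to $u\cap\ell_2'$, which are distinct — a contradiction. With that lemma in place, your equivariance-plus-orientation argument does finish the proof.
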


Recall that in the skewed-$\RR$ covered case, one has that one can identify $(\cO_\phi, \cG^s, \cG^u)$ with the band $\cB=\{(x,y) \in \RR^2 \ : \ x-1 < y < x+1\}$ with the foliations by horizontal and vertical lines. In this case one can easily see that every point is the corner of a lozenge. On the other hand, for a suspension, there is no lozenge at all. In the next section we shall see that non-$\RR$-covered Anosov flows always have lozenges, thus: 

\begin{prop}\label{prop-periodiccornersnonsusp}
An Anosov flow (with orientable foliations) which is not a suspension, always has a lozenge with periodic corners (i.e. fixed by the same non-trivial deck transformation). 
\end{prop}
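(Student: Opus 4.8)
The plan is to split along the leaf--space dichotomy and treat the $\RR$-covered and the non-$\RR$-covered cases separately. First I would apply Theorem~\ref{teo.rcoveredboth}: the flow is either $\RR$-covered or not. If it is $\RR$-covered, then since $\phi_t$ is not a suspension, the remark following Theorem~\ref{teo.rcoveredboth} (Barbot's \cite[Theorem~2.7]{Barbot-leaf}) forces it to be skewed-$\RR$-covered. So it suffices to exhibit a lozenge with periodic corners in each of the two remaining cases: $\phi_t$ skewed-$\RR$-covered, and $\phi_t$ not $\RR$-covered.

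In the skewed case I would work in the band model, identifying $(\cO_\phi,\cG^s,\cG^u)$ with $\cB$ foliated by horizontal and vertical lines. By Proposition~\ref{prop-rcovtransitive} the flow is transitive, so by Proposition~\ref{prop-propbifol} points fixed by a non-trivial deck transformation are dense in $\cO_\phi$; pick such a point $p$, fixed by $\gamma\in\pi_1(M)\setminus\{\mathrm{id}\}$. As recalled in the excerpt, in the band model every point is the corner of a lozenge, so some quadrant $Q_p$ of $p$ contains a lozenge $\cD$ with second corner $y$, and in the band model $\cD$ is the only lozenge with corner $p$ inside $Q_p$. Since the foliations are orientable, $\gamma$ preserves the orientations of the leaves of $\cG^s$ and $\cG^u$, hence fixes each of the four half-leaves at $p$ and therefore maps $Q_p$ to itself. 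As $\gamma$ is a homeomorphism respecting both foliations, $\gamma(\cD)$ is again a lozenge with corner $p$ inside $Q_p$, so $\gamma(\cD)=\cD$ by uniqueness, and hence $\gamma$ fixes the second corner $y$. Thus $\cD$ has periodic corners $p,y$.

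In the non-$\RR$-covered case, $\cL^s$ (equivalently $\cL^u$) fails to be Hausdorff, so one of $\cG^s,\cG^u$ has a pair of non-separated leaves. The input I would use is the analysis of branching from the next section, already present in Fenley's work \cite{Fenley,Fenley-branch}: a pair of non-separated leaves is always $\pi_1(M)$-periodic, i.e. there exist non-separated leaves invariant under a common non-trivial $\gamma\in\pi_1(M)$; by Proposition~\ref{prop-propbifol} $\gamma$ has a fixed point on each of them, and these two fixed points are the corners of a lozenge, which is therefore a lozenge with periodic corners. Alternatively, starting only from the forthcoming fact that non-$\RR$-covered flows carry \emph{some} lozenge, one could try to slide it to a lozenge with a periodic corner using density of periodic leaves, and then force the opposite corner to be periodic by a rigidity argument along the perfect-fit sides; but quoting the periodicity of branching is cleaner.

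The main obstacle is precisely this last case: promoting ``there is a lozenge'' to ``there is a lozenge with \emph{periodic} corners''. The skewed case is essentially formal once the band model, transitivity, and orientability are in hand, whereas the periodicity of non-separated leaves is a genuine theorem --- it is exactly the point where density of periodic orbits, the discreteness of the fixed-point sets of deck transformations, and the rigidity of lozenge and perfect-fit configurations must be combined carefully, and that is where I would expect the real work to lie.
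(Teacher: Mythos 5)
Your overall decomposition (non-suspension $\RR$-covered $\Rightarrow$ skewed via Barbot, then treat the skewed and non-$\RR$-covered cases) is exactly the paper's route, and your skewed-case argument is correct and is essentially the intended exercise with the one-step-up map: in the band model the unique lozenge with corner $p=(a,b)$ inside a fixed quadrant has second corner $(b+1,a+1)=\eta_1(p)$, and your equivariance/uniqueness argument (or the fact that $\eta_1$ commutes with the $\pi_1(M)$-action) forces both corners to be fixed by $\gamma$. The problem is the non-$\RR$-covered case. Your key assertion there --- that the fixed points $x\in L$, $x'\in L'$ of $\gamma$ on a pair of non-separated leaves ``are the corners of a lozenge'' --- is false; the paper states the opposite explicitly in the proof of Theorem~\ref{teo-adjacentloz} (``since $L$ and $L'$ are non-separated, we know that $x$ and $x'$ are not the corners of a lozenge''). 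Non-separation of the corner leaves is precisely the configuration of \emph{adjacent} lozenges: the two non-separated leaves arise as the leaves making perfect fits with a shared side (Figure~\ref{f.adjloz}), so $x$ and $x'$ are joined by a chain of at least two lozenges, never a single one.

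The correct input, and the one the paper actually uses, is Fenley's theorem from \cite{Fenley-loz} stated in \S\ref{s.nonrcov}: two distinct points of $\cO_\phi$ fixed by the same non-trivial $\gamma$ are joined by a chain of lozenges, and the construction (pushing off along boundary leaves of $\gamma$-invariant saturated sets) produces lozenges all of whose corners are $\gamma$-fixed; combined with Theorem~\ref{teobranching} this yields the proposition, and Theorem~\ref{teo-adjacentloz} even gives a pair of adjacent such lozenges. So the statement you want is true and your outline coincides with the paper's once this step is repaired, but as written the step carrying all the content of the non-$\RR$-covered case has been replaced by an incorrect shortcut, and your fallback (``slide'' an arbitrary lozenge to one with periodic corners and then force the opposite corner to be periodic) is only a sketch of intent, not an argument --- making the opposite corner periodic is exactly the nontrivial rigidity that the chain-of-lozenges proof supplies.
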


Note that if a lozenge is fixed by some $\gamma \in \pi_1(M) \setminus \{\mathrm{id}\}$ then the corners correspond to periodic orbits of $\phi_t$ which are (freely) homotopic but the homotopy reverses the orientation of the flow. In this sense, the previous proposition can be understood as saying that an Anosov flow which is not a suspension always has a pair of periodic orbits which are freely homotopic to the inverse of the other. 

\begin{figure}[ht]
\begin{center}
\includegraphics[scale=0.90]{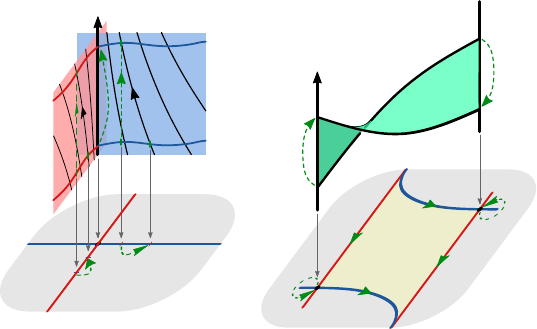}
\begin{picture}(0,0)
\end{picture}
\end{center}
\vspace{-0.5cm}
\caption{{\small Lozenges provide freely homotopic orbits going in oposite directions. In the left the lift of the flow over the leaf space is depicted. In the right, one can see how to lift a lozenge, the periodic orbits in the corners need to flow in oposite directions, so, one can put a band transverse to the flow and whose boundary glues with its image under the deck transformation, resulting in an annulus containing the periodic orbits in oposite directions and transverse to the flow in the interior. (Figure by Theo Marty)}\label{f.freelyhom}}
\end{figure}

\begin{quest}
Is there a \emph{contact topology} proof of this fact? (In the sense of \cite{Mitsumatsu,Hoozori,CLMM}, for instance.)
\end{quest}

An important property is that whenever an Anosov flow has two freely homotopic periodic orbits (without paying attention to orientation), then, there is a \emph{chain of lozenges} joining them. Let us be more precise. We will say that two lozenges $\cD,\cD'$ are \emph{consecutive} if their closures intersect, this implies that they share a corner. There are two possibilities, either they share a side (sometimes said, \emph{adjacent}), or they do not. 

\begin{defi}\label{def-chain}
A \emph{chain of lozenges} is a (finite or infinite) union of lozenges $\cC$, so that for any pair of lozenges $\cD, \cD'$ in the chain, there is a sequence of lozenges $\cD_1, \ldots, \cD_k$ so that $\cD_i$ is consecutive to $\cD_{i+1}$ and $\cD=\cD_1, \cD'=\cD_k$. If there are no adjacent lozenges in a chain of lozenges, we call the chain a \emph{string of lozenges}. 
\end{defi}

\begin{figure}[ht]
\begin{center}
\includegraphics[scale=0.70]{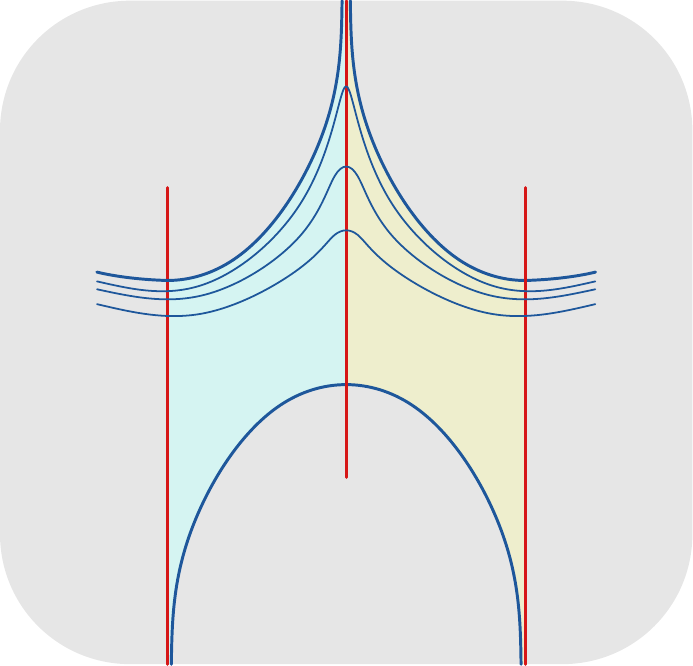}
\begin{picture}(0,0)
\end{picture}
\end{center}
\vspace{-0.5cm}
\caption{{\small Adjacent lozenges. (Figure by Theo Marty)}\label{f.adjloz}}
\end{figure}

Note that if two lozenges are adjacent, then the leaf space cannot be $\RR$-covered (see Figure \ref{f.adjloz}), since the leaves making a perfect fit with the side that contains both verifies that are not separated. In particular, the fact that two consecutive lozenges share a side can be characterized by the fact that there exists a leaf (of either $\cG^s$ or $\cG^u$) that intersects the interior of both lozenges. In the next section we will see that the converse also holds, whenever the Anosov flow is non $\RR$-covered, there is a pair of adjacent lozenges. This then actually characterizes non-$\RR$-covered Anosov flows. To show this, we will show that: 

\begin{prop}\label{prop-chainofloz}
If $\phi_t: M \to M$ is an Anosov flow with all bundles orientable, and $o_1, o_2$ are distinct periodic orbits which are freely homotopic (disregarding orientation), then, there are points $p_1,p_2 \in \cO_\phi$ (corresponding to lifts of $o_1$ and $o_2$ respectively) which are corners of a chain of lozenges which are all fixed by a deck transformation $\gamma \in \pi_1(M) \setminus \{\mathrm{id}\}$. 
\end{prop}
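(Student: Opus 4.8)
The plan is to rephrase the free homotopy hypothesis as the existence of a single nontrivial deck transformation with two prescribed fixed points in $\cO_\phi$, then to build the lozenges one at a time by pushing half-leaves to their ideal ends and detecting perfect fits, and finally to argue that the resulting chain closes up exactly at the chosen lift of $o_2$. For the setup: a periodic orbit of $\phi_t$ determines a free homotopy class, i.e.\ a conjugacy class in $\pi_1(M)$, and conversely every periodic orbit in the conjugacy class of $\gamma$ has a lift to $\cO_\phi$ fixed by $\gamma$, this lift being the unique fixed point of $\gamma$ on each of the two $\gamma$-invariant leaves of $\cG^s$ and $\cG^u$ through it. Since $o_1$ and $o_2$ are freely homotopic up to orientation, choosing $\gamma$ in the class of $o_1$ produces points $p_1$ (a lift of $o_1$) and $p_2$ (a lift of $o_2$) with $\gamma p_1 = p_1$ and $\gamma p_2 = p_2$; if $o_2$ is freely homotopic to $o_1^{-1}$ one uses that $\mathrm{Fix}(\gamma) = \mathrm{Fix}(\gamma^{-1})$ in $\cO_\phi$. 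After replacing $\gamma$ by $\gamma^2$ if necessary, we may assume $\gamma$ preserves the orientations of $\cG^s$, of $\cG^u$, and of each leaf it fixes; then, by Proposition~\ref{prop-propbifol}, $\gamma$ contracts one of $\cG^s(p_i), \cG^u(p_i)$ and expands the other, so each of the four half-leaves issuing from $p_i$ is $\gamma$-invariant and a neighbourhood of $p_i$ is divided into four $\gamma$-invariant quadrants.

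The building block is the elementary observation that if $r, r'$ are fixed points of $\gamma$ and there is a quadrant $Q_r$ at $r$ and a quadrant $Q_{r'}$ at $r'$ whose intersection is a lozenge $\cD$ with corners $r, r'$, then all four sides of $\cD$ are half-leaves based at the $\gamma$-fixed points $r, r'$, hence $\gamma$-invariant, so $\cD$ itself is $\gamma$-invariant. Consequently it suffices to connect $p_1$ to $p_2$ by a finite sequence of fixed points of $\gamma$ in which each consecutive pair bounds such a lozenge: consecutive lozenges then share a corner and together form a chain in the sense of Definition~\ref{def-chain}.

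To pass from one corner to the next, start at $p_1$ and choose a $\gamma$-invariant half-leaf $H$ at $p_1$ pointing ``towards $p_2$'', the direction being read off from the cyclic order in which leaves of $\cG^s$ and $\cG^u$ are crossed along an embedded arc from $p_1$ to $p_2$ in $\cO_\phi \cong \RR^2$. Say $H \subset \cG^s(p_1)$, and consider the family of unstable leaves $\cG^u(z)$, $z \in H$. Since $\gamma$ preserves $H$ and acts on it monotonically with $p_1$ as its only fixed point, and since $z \mapsto \cG^u(z)$ is $\gamma$-equivariant, this family is $\gamma$-invariant, and we examine it as $z$ runs to the ideal end of $H$. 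Either these unstable leaves eventually all meet $\cG^s(p_2)$, in which case a monotonicity argument of the kind used in the proof of Theorem~\ref{teo.rcoveredboth} shows that $p_1$ and $p_2$ are directly joined by a lozenge and the previous paragraph concludes; or there is a limiting unstable leaf $U$ which, being the monotone limit of a $\gamma$-invariant family, is itself $\gamma$-invariant and makes a perfect fit with $H$, and one shows---this is where the structure of non-separated leaves of $\wfwu$ enters---that $\gamma$ fixes a point $r_1 \neq p_1$ carried by $U$, with $p_1$ and $r_1$ bounding a $\gamma$-invariant lozenge. Iterating with $r_1$ in place of $p_1$ produces a chain of $\gamma$-invariant lozenges emanating from $p_1$.

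The remaining---and genuinely delicate---point is to show that the chain produced this way actually has $p_2$ as its other extreme corner rather than drifting to some different fixed point of $\gamma$, and that it is finite. I would handle this through Fenley's analysis of the branching of $\wfws$ and $\wfwu$ (see \cite{Fenley-branch}): the set $\mathrm{Fix}(\gamma) \subset \cO_\phi$, together with the leaves through its points, carries a tree-like combinatorial structure; each of these fixed points corresponds to a periodic orbit freely homotopic up to orientation to $o_1$; there are only finitely many such orbits; and along the chain $\gamma$ genuinely fixes every corner, which rules out the degenerate possibility that $\gamma$ merely translates along a side shared by infinitely many lozenges. Granting this, the chain through $p_1$ is finite and $p_2$, being among its corners, is reached, and the portion of the chain between $p_1$ and $p_2$ gives the statement. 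I expect this closing-up-and-finiteness step to be the principal obstacle; the earlier steps are essentially local bookkeeping with perfect fits and $\gamma$-equivariance.
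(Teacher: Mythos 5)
Your overall route coincides with the paper's: reduce the free homotopy hypothesis to a single $\gamma\in\pi_1(M)\setminus\{\mathrm{id}\}$ fixing lifts $p_1,p_2\in\cO_\phi$, then build the chain inductively by pushing a half-leaf at a corner to its ideal end and taking a $\gamma$-invariant boundary leaf making a perfect fit, which carries the next $\gamma$-fixed corner (this is exactly the content of the theorem from \cite{Fenley-loz} that the paper invokes and sketches via the saturated sets $\cI^s(A)$, $\cI^u(L)$ and their boundary leaves). The problem is the step you yourself flag as the principal obstacle: termination of the chain at $p_2$. Your justification rests on the claim that there are only finitely many periodic orbits freely homotopic (up to orientation) to $o_1$, hence that $\mathrm{Fix}(\gamma)\subset\cO_\phi$ is essentially finite. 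That premise is false in general: for skewed-$\RR$-covered Anosov flows (for instance those on hyperbolic 3-manifolds) free homotopy classes of periodic orbits are infinite; equivalently, if $\gamma$ fixes $p$ then, since the one-step-up map $\eta$ of \S\ref{ss.onestepup} commutes with the $\pi_1(M)$-action, $\gamma$ fixes the whole infinite orbit $\{\eta^k(p)\}_{k\in\ZZ}$, so $\mathrm{Fix}(\gamma)$ is an infinite string of lozenge corners. What \cite{Fenley-branch} gives is finiteness of the branching (non-separated) leaves, not finiteness of a free homotopy class, so the structure you appeal to does not deliver the finiteness you need, and as written the closing-up step is a genuine gap.

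The finiteness that is actually needed is local to the pair $p_1,p_2$ and is obtained as in the paper's sketch: run the construction between $L=\cG^s(p_1)$ and $L'=\cG^s(p_2)$, so the induction is anchored at both ends (each new leaf is the boundary leaf separating the previous data from $L'$). If the resulting sequence of $\gamma$-fixed stable leaves $L_1,L_2,\dots$ never reached $L'$, the union $\cH$ of the half-planes bounded by the $L_i$ on the side away from $L'$ would be an open, $\cG^s$-saturated, $\gamma$-invariant set, and its boundary leaf $L_\infty$ separating $\cH$ from $L'$ would again be $\gamma$-fixed; the $\gamma$-fixed point on $L_\infty$ has a $\gamma$-invariant unstable leaf which would then cross infinitely many $\gamma$-fixed stable leaves, impossible since $\gamma$ acts on that unstable leaf as a contraction or expansion with a single fixed point. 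This also removes your worry about the chain ``drifting'' to other fixed points of $\gamma$, since each step is defined by separation from $L'$ rather than by a heuristic direction towards $p_2$. With that replacement your argument becomes the paper's; without it, the termination step does not stand.
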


Note that the previous proposition is relatively direct in the $\RR$-covered case. If $\phi_t$ is a suspension, then, there are no pairs of distinct freely homotopic periodic orbits. When $\phi_t$ is skewed-$\RR$-covered, then, we leave this as an exercise for the reader (suggesting to use the one-step up maps, see \S \ref{ss.onestepup}). 

Lozenges are an important concept in the study of leaf spaces of Anosov flows in dimension 3 as we will see. Just to add an element, which is important in the study of the contact Anosov property (see \cite{Marty2}), is that lozenges allow one to produce annuli transverse to the flow whose boundaries consist on freely homotopic orbits turning in oposite directions (called \emph{partial Birkhoff sections} in \cite{Marty2}). The use of such annuli goes back at least to Barbot's work (see \cite{Barbot}).  See figure \ref{f.partialB}.

\begin{figure}[ht]
\begin{center}
\includegraphics[scale=1.65]{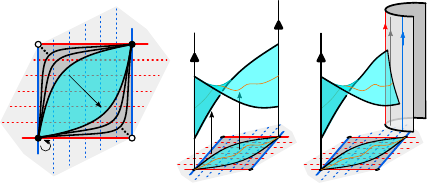}
\begin{picture}(0,0)
\end{picture}
\end{center}
\vspace{-0.5cm}
\caption{{\small Partial sections from a lozenge. (Figure by Theo Marty)}\label{f.partialB}}
\end{figure}

\subsection{Some properties of non-$\RR$-covered Anosov flows}\label{s.nonrcov}

The first important result about non-$\RR$-covered Anosov flows (besides their existence!) is the following \cite{Fenley-branch}: 

\begin{teo}\label{teobranching}
Let $\phi_t: M \to M$ be an Anosov flow. Let $L \in \widetilde{\cF^{ws}}$ be a leaf which is non-separated from some other leaf (in particular, $\phi_t$ is non-$\RR$-covered). Then, there exists $\gamma \in \pi_1(M)\setminus \{\mathrm{id}\}$ such that it fixes $L$ and all leaves non separated from it. 
\end{teo}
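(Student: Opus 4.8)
The plan is to exploit the fact that the set of non-separated leaves carries a natural cyclic order and finiteness structure that $\pi_1(M)$ must respect. Let $L \in \wfws$ be non-separated from some other leaf. First I would set $\mathcal{N}$ to be the set of all leaves non-separated from $L$ (including $L$ itself); by the general structure of non-separation in codimension-one foliations on $\mathbb{R}^3$ (Novikov plus Palmeira, as recalled before Lemma \ref{lem.leafspace}), two leaves of $\wfws$ that are non-separated cannot be nested, and all the leaves in $\mathcal{N}$ accumulate from a common side, so $\mathcal{N}$ inherits a linear (in fact cyclic-type) order from $\cL^s$. The key dynamical input is that, projecting to the orbit space $\cO_\phi$ via Theorem \ref{teo-bifoliated}, the corresponding leaves of $\cG^s$ that are non-separated are precisely the stable sides of a chain of lozenges: between two consecutive non-separated leaves there is a perfect fit, hence (after going far enough in the chain, or because the branching locus is discrete and $\pi_1(M)$-invariant) one produces lozenges with the $L_n$'s on one side.

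Next I would use the density of periodic orbits together with the shadowing/closing lemma for hyperbolic flows (the bullet list after Lemma \ref{lem.leafspace}, and Proposition \ref{prop-propbifol}): the non-separation between two leaves $L, L'$ of $\wfws$ forces a perfect-fit configuration in $\cO_\phi$, and perfect fits between a stable half-leaf and an unstable half-leaf are rigid — any deck transformation fixing one of the two leaves must fix the other (the Remark after the definition of lozenges). So the real content is to show that \emph{some} non-trivial $\gamma \in \pi_1(M)$ fixes $L$. For this, one argues that $\mathcal{N}$, as a subset of $\cL^s$, is a \emph{discrete} set: the branching locus of a foliation by planes coming from an Anosov flow has no accumulation of non-separated leaves (this is part of the Barbot--Fenley structure theory; alternatively, an accumulation would contradict the local product structure and properness of the foliation in $\mt$). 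Then $\mathcal{N}$ is a countable discrete subset of $\cL^s$ invariant under the stabilizer in $\pi_1(M)$ of the side from which the $L_n$ accumulate. Using minimality-type or recurrence arguments (density of periodic leaves in $\cL^s$, again from the bullet list), one finds a deck transformation $\gamma$ that maps the branching configuration at $L$ to itself; since $\gamma$ permutes the finitely-many (or at least the order-isomorphic) elements of $\mathcal{N}$ and preserves their order, a suitable power of it fixes each leaf of $\mathcal{N}$ individually. Replacing $\gamma$ by that power gives a non-trivial element fixing $L$ and every leaf non-separated from it.

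Concretely the steps are: (1) transfer the non-separation data from $\wfws$ to the bifoliated plane $\cO_\phi$ and identify the relevant leaves of $\cG^s$ as stable sides of lozenges joined by perfect fits; (2) show the branching set $\mathcal{N}$ at $L$ is discrete and hence the non-separated leaves at $L$ form a finite or bi-infinite order-chain; (3) produce, via density of periodic orbits and the rigidity of perfect fits (Proposition \ref{prop-propbifol} and the lozenge remarks), a periodic point whose orbit "sees" the branching, i.e.\ a deck transformation $\gamma$ preserving the configuration; (4) observe $\gamma$ acts on the ordered set $\mathcal{N}$ preserving orientation, so it either fixes all of $\mathcal{N}$ pointwise or, if $\mathcal{N}$ is finite and $\gamma$ permutes cyclically, a power of $\gamma$ does — and that power still fixes $L$ since $L \in \mathcal{N}$; conclude with the perfect-fit rigidity that fixing $L$ forces fixing each non-separated leaf.

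The main obstacle I expect is step (3): extracting a \emph{non-trivial} deck transformation fixing $L$ from the mere existence of branching. Density of periodic leaves gives periodic leaves near $L$, but one must show branching itself is "recurrent enough" to be detected by a closed orbit — this is exactly where one needs the finer Barbot--Fenley/Fenley machinery (ideal boundaries of leaves, or the classification of the ways a leaf can be non-separated), rather than soft arguments, and where one uses that the ambient manifold is compact so that the non-separated configuration cannot drift off to infinity without repeating. A secondary subtlety is ensuring that the element obtained fixes \emph{all} leaves non-separated from $L$ and not merely $L$ and one partner; this is handled by the perfect-fit rigidity propagated along the chain, but one should check the orientation bookkeeping (the footnote-level orientability hypotheses) so that no leaf in $\mathcal{N}$ is sent to a different one by the chosen power of $\gamma$.
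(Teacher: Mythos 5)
There is a genuine gap, and it sits exactly where you suspected: step (3). Producing a non-trivial deck transformation that preserves the branching configuration at $L$ \emph{is} the content of the theorem, and the tools you invoke do not deliver it. Density of $\pi_1(M)$-fixed leaves in $\cL^s$ gives you periodic leaves accumulating on $L$, but the deck transformations fixing those nearby leaves have no reason to fix $L$ or to map the set $\mathcal{N}$ of leaves non-separated from $L$ to itself; ``recurrence of the branching configuration'' is precisely what must be proved, not an available hypothesis. The paper's mechanism is quite different and genuinely dynamical: one takes the boundary leaf $F_0$ of the set $\cI^u(L)$ of unstables meeting $L$ (the unique boundary leaf separating $L$ from a leaf $L'$ non-separated from it), notes that $F_0$ makes a perfect fit with $L$, and then argues by contradiction that $F_0$ must project to a leaf containing a periodic orbit: if not, one flows a point $p$ of the corresponding orbit backwards, uses compactness of $M$ to recenter via deck transformations $\gamma_n$ so that $\gamma_n\tilde\phi_{-t_n}(p)$ converges, observes that companion points on the strong stable of $p$ lying over $L$ and $L'$ are pushed arbitrarily far away, and derives a separation contradiction in the bifoliated plane: for suitable $n,m$ the leaf $\gamma_n L$ would separate $\gamma_m L$ from $\gamma_m L'$, contradicting their non-separation. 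Nothing in your proposal replaces this argument, so the key step remains unproved.

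Two secondary problems. First, your step (1) is circular as stated: identifying the non-separated leaves as stable sides of (adjacent) lozenges is, in the paper, a \emph{consequence} of this theorem (Theorem \ref{teo-adjacentloz} is proved using it), not an input; all you may use at this stage is the perfect fit between $L$ and the boundary leaf $F_0$, which needs no periodicity. Second, the discreteness of $\mathcal{N}$ and the statement that a single $\gamma$ fixes every leaf non-separated from $L$ are themselves part of the finer structure theory of branching (the paper explicitly says this part ``requires more work'' and refers to the original source), so appealing to ``the Barbot--Fenley structure theory'' there is again assuming what is to be shown; your order-theoretic bookkeeping in step (4) is fine once one knows $\gamma$ preserves $\mathcal{N}$ and $\mathcal{N}$ is a discrete chain, but those facts must be established first.
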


A feature of this proof is that it cannot be done purely in the bifoliated plane: we will need to project to $M$ and argue some parts there. We will try to do so the least as possible.  

\begin{proof}[Sketch of the proof] 
Consider $L, L'$ two non-separated leaves of $\widetilde{\cF^{ws}}$. We will just prove that $L$ is invariant under some non-trivial deck transformation. The fact that both $L$ and $L'$ are fixed by the same deck transformation is more complicated, we will indicate some of the ideas (sufficient for our purposes) in the sketch of the proof of the next result. 

Let $\cI^u(L) = \{F \in \widetilde{\cF^{wu}} \ : \ F \cap L \neq \emptyset \}$ and $\cI^u(L')$ defined similarly. Note that $\cI^u(L), \cI^u(L')$ are open by transversality and are disjoint. The disjointness can be more easily seen in the bifoliated plane: if a leaf $F \in   \widetilde{\cF^{wu}} $ intersects both $L$ and $L'$, then, taking a sequence $L_n$ converging both to $L$ and $L'$ we get that for large $n$ we have that $L_n \cap F$ has to intersect in two distinct orbits, which would made the orbit space restricted to $F$ non Hausdorff which is not the case. 

In particular, one can consider a leaf $F_0 \in \partial \cI^u(L)$ to be the unique leaf in the closure of $\cI^u(L)$ separating\footnote{Unfortunately, the notion of 'non-separated' leaves in a given foliation can produce confusion when using this term that is somewhat different. Two leaves in the same foliation are non-separated if there is a sequence of leaves converging to both, while here we are considering a leaf of the other foliation and saying that it separates two leaves in the sense we explain. Note however, that it is true that if $L$ is non-separated from $L'$, then, there is no leaf $L''$ so that $\mt \setminus L''$ contains $L$ and $L'$ in different connected components.} $L$ from $L'$ in the sense that $L$ and $L'$ are in different connected components of $\mt \setminus F_0$. 

Note that $L$ and $F_0$ make a perfect fit. We will show that $F_0$ is fixed by some non-trivial deck transformation (and this will force $L$ to be fixed too, because they make a perfect fit). 

Fix some $L_0$ close to both $L$ and $L'$ (which are non-separated) so that it intersects both $F_0$ and weak unstable leaves $F$ and $F'$ intersecting  $L$ and $L'$ respectively. Consider the orbits $x = L_0 \cap F$, $x' = L_0 \cap F'$ and $x_0 = L_0 \cap F_0$. 

Choose some point $p \in \widetilde{M}$ which belongs to $x_0$. We assume by contradiction that the weak unstable leaf $F_0$ of $p$ is not a cylinder, which means that the backward orbit of $p$ does not accumulate in a closed orbit. In particular, if we consider $t_n \to +\infty$ we can find a sequence of deck transformations $\gamma_n \in \pi_1(M)$ going to infinity such that $p_n= \gamma_n \tilde\phi_{-t_n}(p) \to p_\infty \in \widetilde{M}$. Note that if we consider $q \in x$ and $q' \in x'$ so that they belong to the strong stable manifold of $p$ we get that if $q_n =  \gamma_n \tilde\phi_{-t_n}(q)$ and $q_n' =  \gamma_n \tilde\phi_{-t_n}(q')$ we get that the distance in $\widetilde{M}$ from $p_n$ to $q_n$ and $q_n'$ goes to infinity (because they belong to the same strong stable leaf). 

\begin{figure}[ht]
\begin{center}
\includegraphics[scale=1.15]{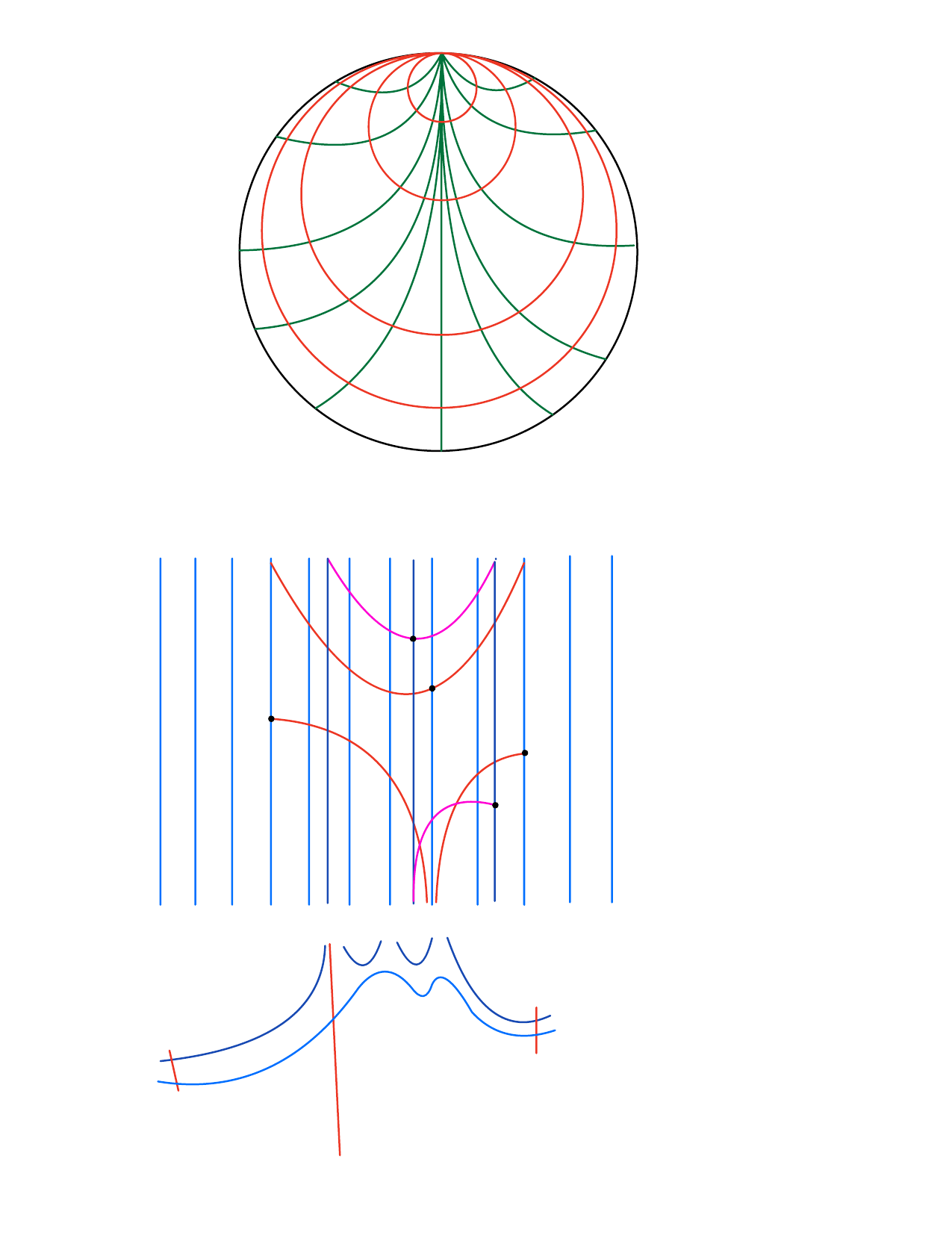}
\begin{picture}(0,0)
\put(-222,155){$L$}
\put(-85,130){ $L'$}
\put(-192,65){$F_0$}
\put(-135,120){$L_0$}
\put(-312,74){$F$}
\put(-45,90){ $F'$}
\end{picture}
\end{center}
\vspace{-0.5cm}
\caption{{\small Depiction of the elements in the proof}\label{f.nonrcov1}}
\end{figure}

The proof concludes with a figure that can now be done in the bifoliated plane, if $n,m$ are large enough, $p_n, p_m$ belong to the same foliation box. We can assume (up to relabeling) that $\gamma_n F_0$ separates $\gamma_m L$ from $\gamma_n L'$, that is, so that $\gamma_m L$ is in the connected component $U$ of $\widetilde{M} \setminus \{ \gamma_n F_0\}$ not containing $\gamma_n L'$, but this also implies that $\gamma_m F_0$ is in $U$, and therefore, one deduces that $\gamma_m L$ is contained in the connected component of $\widetilde{M} \setminus \{\gamma_n L\}$ not containing $\gamma_n L'$. Now, since the distance of $q_m'$ to $p_m$ goes to infinity, we have that $\gamma_m L'$ is contained in the same connected component of $\widetilde{M} \setminus \{\gamma_n L\}$ as $\gamma_n L'$ (because they have some closeby point) and this is a contradiction, because then $\gamma_n L$ separates\footnote{Here, we mean in the sense that they lie in different connected components of $\widetilde{M} \setminus \{\gamma_n L\}$, but this also implies that the leaves it separates cannot be 'non-separated' in the leaf space.} $\gamma_m L$ from $\gamma_m L'$ contradicting that these leaves are non-separated.

\begin{figure}[ht]
\begin{center}
\includegraphics[scale=1.15]{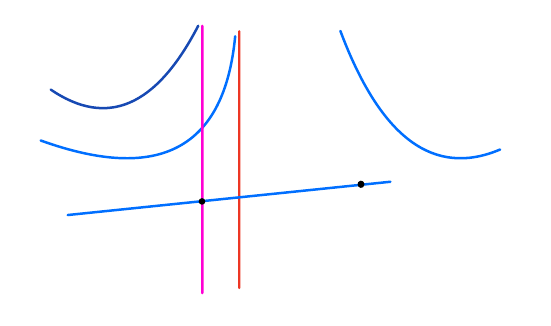}
\begin{picture}(0,0)
\put(-115,65){ $q_m'$}
\put(-205,56){$p_m$}
\end{picture}
\end{center}
\vspace{-0.5cm}
\caption{{\small A good choice of deck transformation gives a configuration that is non possible}\label{f.nonrcov2}}
\end{figure}

This proves that $F_0$ (and therefore $L$) is fixed by some non-trivial deck transformation. To show that the same deck transformation also fixes $L'$ requires more work, see \cite{Fenley-branch}. 
\end{proof}

The following result implies Proposition \ref{prop-periodiccornersnonsusp}. This is from \cite{Fenley-loz}. 

\begin{teo}
Let $\phi_t: M \to M$ be an Anosov flow and let $x, x'$ two different points in $\cO_\phi$ fixed by some $\gamma \in \pi_1(M) \setminus \{\mathrm{id}\}$. Then, there is a chain of lozenges joining $x$ and $x'$. 
\end{teo}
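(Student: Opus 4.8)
The plan is to analyze the action of $\gamma$ on $\cO_\phi$ near the two fixed points $x,x'$ and to build a chain of lozenges all invariant under $\gamma$, by a ``pushing'' argument along the fixed-point structure in the bifoliated plane. First I would recall from Proposition \ref{prop-propbifol} that since $x$ is fixed by $\gamma$, it is a saddle: $\gamma$ contracts one of $\cG^s(x),\cG^u(x)$ and expands the other. Consider the four quadrants at $x$ and the four at $x'$. The key observation is that if $x$ and $x'$ are both $\gamma$-fixed, then the leaves $\cG^s(x),\cG^u(x),\cG^s(x'),\cG^u(x')$ are all $\gamma$-invariant, and so are the endpoints (in $\cL^s$ or $\cL^u$) of their half-leaves; repeatedly using the perfect-fit remark (if $\gamma$ fixes a leaf of one foliation making a perfect fit with a leaf of the other, it fixes that leaf too, as noted in the Remark after Theorem \ref{teo.rcoveredboth}), one generates a whole $\gamma$-invariant ``web'' of leaves joining the two orbits.

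The main step is to produce the lozenges themselves. I would look at the set $\cF$ of all points of $\cO_\phi$ fixed by $\gamma$; this is a closed, $\gamma$-invariant, discrete-looking configuration (each fixed point is isolated in its own stable and unstable leaf by Proposition \ref{prop-propbifol}, though the full set need not be discrete in $\cO_\phi$). Starting at $x$, pick the quadrant $Q$ ``towards'' $x'$ and follow the stable half-leaf of $x$ bounding $Q$: because $\gamma$ acts on $\cL^u$ with isolated fixed points, the half-leaf $\cG^s(x)\cap \partial Q$ either meets $\cG^u(x')$ (or a $\gamma$-fixed leaf on the way) giving directly a lozenge with corner $x$, or it limits onto a leaf non-separated from it, which by Theorem \ref{teobranching} is $\gamma$-fixed and contains a $\gamma$-fixed point $x_1$; then $x,x_1$ are corners of a lozenge (the perfect-fit conditions are exactly the non-separation/branching statements, which are $\gamma$-invariant). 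Iterating from $x_1$ and using that there are only finitely many free-homotopy-relevant directions — more precisely, using that the whole chain projects into $M$ to periodic orbits freely homotopic to the closed orbit corresponding to $\gamma$, of which there are finitely many by a properness/compactness argument — the process terminates at $x'$. Assembling the consecutive lozenges gives the chain.

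The hard part will be the termination/finiteness argument: controlling that the inductive ``next corner'' procedure actually reaches $x'$ rather than wandering off to infinity or producing an infinite chain that never closes up. I expect this is handled by passing to $M$: a chain of $\gamma$-invariant lozenges yields a sequence of periodic orbits all in the free homotopy class of $\gamma$, and one invokes that an Anosov flow has only finitely many periodic orbits in a given free homotopy class (a consequence of the closing lemma together with the local compactness of the orbit space and the discreteness of the $\gamma$-fixed set, of the same flavor as the arguments sketched for Theorem \ref{teobranching}), forcing the chain to be finite and hence to terminate at $x'$. A secondary technical point is checking the perfect-fit axioms in the definition of lozenge at each step; these follow from the branching structure of non-separated leaves (Theorem \ref{teobranching}) being preserved under $\gamma$, so the only real work is the bookkeeping of quadrants and orientations, which I would organize exactly as in the case-analysis of the proof of Theorem \ref{teo.rcoveredboth}.
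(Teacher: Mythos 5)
Your overall strategy (inductively producing $\gamma$-fixed corners via boundary leaves/perfect fits) is in the spirit of the paper's argument, but the step you yourself flag as the hard part — termination — rests on a claim that is both unjustified and false in general. You invoke ``an Anosov flow has only finitely many periodic orbits in a given free homotopy class'' as a consequence of the closing lemma plus compactness. This is not a soft consequence of anything of that flavor, and it actually fails: for skewed-$\RR$-covered Anosov flows on hyperbolic $3$-manifolds the one-step-up map $\eta$ of \S\ref{ss.onestepup} commutes with the $\pi_1(M)$-action, so if $\gamma$ fixes $x$ it fixes all the corners $\eta^k(x)$ of an infinite string of lozenges, and these project to infinitely many distinct periodic orbits in the free homotopy class of (the orbit of) $\gamma$. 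So your termination mechanism cannot work as stated. The paper terminates the induction entirely in the universal cover: the $\gamma$-fixed stable leaves $L_1,L_2,\dots$ produced between $L=\wfws(x)$ and $L'=\wfws(x')$ bound half-spaces whose union $\cH$ is open and $\wfws$-saturated; if the process did not reach $L'$ in finitely many steps, a boundary leaf $L_\infty$ of $\cH$ would be $\gamma$-fixed, and the unstable leaf of its $\gamma$-fixed point would cross infinitely many $\gamma$-fixed stable leaves, producing infinitely many $\gamma$-fixed points on a single leaf — contradicting that $\gamma$ acts on each fixed leaf as a contraction/expansion with a unique fixed point. That monotone, bounded-by-$L'$ accumulation argument is the missing idea.

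There is also a flaw in your basic dichotomy. The branch ``the stable half-leaf of $x$ meets $\cG^u(x')$ (or a $\gamma$-fixed leaf on the way), giving directly a lozenge'' cannot occur: two $\gamma$-invariant transverse leaves meet (if at all) in a single orbit of the lifted flow, which is then $\gamma$-fixed, so $\cG^s(x)$ would carry a second fixed point besides $x$, which is impossible. The correct mechanism, as in the paper, is to consider the $\gamma$-invariant saturated set of leaves of one foliation crossing a chosen half-leaf at $x$, and take the (unique separating) boundary leaf, which makes a perfect fit and is $\gamma$-fixed because the construction is canonical in terms of $\gamma$-invariant data — not via Theorem \ref{teobranching}, which only yields invariance under \emph{some} deck transformation. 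Finally, note that reaching the fixed point $x_1$ on the intermediate stable leaf $L_1$ does not yet give a lozenge with corners $x,x_1$: the paper needs a second induction (with unstable saturations, possibly producing intermediate corners $y_1,y_2,\dots$) to connect $x$ to $x_1$ by lozenges, a step your sketch compresses into one move.
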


\begin{proof}[Sketch of the proof]
We will just prove that there is one lozenge, and then indicate how to conclude.  

Take $x,x'$ fixed points of $\gamma$ and let $L$ and $L'$ their respective leaves of $\widetilde{\cF^{ws}}$. 

Consider the half weak unstable leaf $A$ of $x$ which is contained in the connected component of $\widetilde{M} \setminus \{L\}$ containing $L'$. We can look at the set $\cI^s(A)$ of weak stable leaves intersecting $A$. This set cannot contain $L'$ so it is an open set, saturated by weak stables, and thus its boundary is made of weak stable leaves. There is a unique boundary leaf $L_1$ which separates $L$ from $L'$ in the boundary of $\cI^s(A)$. Note that since $\gamma$ fixes $L$ and $L'$ it also fixes $L_1$. It follows that $L_1$ contains a point $x_1$ fixed by $\gamma$. Note moreover that $L$ makes a perfect fit with $A$.

If $L_1 \neq L'$ we can continue the process to construct a sequence $L_2, \ldots, L_j, \ldots$ of stable leaves which are fixed by $\gamma$ and we claim that this process stops in finitely many steps (i.e. there is some $k$ such that $L_k = L'$). If this were not the case, we could consider the union $\cH$ of the half spaces $H_i$ of $\widetilde{M}\setminus L_i$ not containing $L'$. The set $\cH$ is an open $\widetilde{\cF^{ws}}$-saturated set, and thus the boundary has a leaf $L_\infty$ which is either $L'$ or separates $\cH$ from $L'$. Thus, it is also fixed by $\gamma$, so, if we pick a fixed point $x_\infty \in L_\infty$ we can see that its weak unstable leaf intersects infinitely many $\gamma$-fixed weak stable leaves, which is a contradiction since $\gamma$ acts as a contraction or expansion in this leaf. 

To conclude the proof, one needs to show that $x$ and $x_1$ are connected by a chain of lozenges. We will only explain briefly the strategy:

Consider the set of weak unstable leaves intersecting $L$, the set we had called $\cI^u(L)$. Note that $\cI^u(L)$ cannot contain $x_1$ because $x_1$ is fixed by $\gamma$ as is $x$ but $\cI^u(L)$ cannot contain more than one fixed point of $\gamma$. So, there is a boundary leaf $F_0$ which either contains $x_1$ or separates $L$ from $x_1$.

In the first case, we can easily see that $x$ and $x_1$ are the corners of a lozenge and we conclude the proof. 

In the second case, $F_0$, being fixed by $\gamma$ must separate $L$ from $L_1$ (i.e. they lie in distinct connected components of $\mt \setminus F_0$). The leaf $F_0$ has a fixed point $y_1$. We claim that $x$ and $y_1$ are corners of a lozenge, but this is direct since weak stable leaves intersecting $A$ must intersect a weak unstable through $L'$ and therefore intersect $F_0$, and symetrically, a weak unstable in the side of $L \setminus \{x\}$ making the perfect fit with $F_0$ must intersect the weak unstable of $y_1$. This provides the existence of one lozenge. 

One can (see \cite{Fenley-loz} for details) continue this process to produce new lozenges and as before this process will end in finitely many steps. 

\end{proof}

Finally, we have the following result:

\begin{teo}\label{teo-adjacentloz}
An Anosov flow $\phi_t$ is non-$\RR$-covered if and only if the bifoliated plane $\cO_\phi$ contains a pair of adjacent lozenges. 
\end{teo}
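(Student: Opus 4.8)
\emph{Proof proposal.} The plan is to transfer both implications to a statement about non-separated leaves of the stable leaf space, and then to feed in the structural results on lozenges already at hand. The key reduction: by Lemma~\ref{lem.leafspace}, $\cL^s$ is a simply connected $1$-manifold, so $\cL^s\cong\RR$ exactly when $\cL^s$ is Hausdorff, i.e.\ has no pair of non-separated leaves; combined with Theorem~\ref{teo.rcoveredboth} ($\cL^s\cong\RR\iff\cL^u\cong\RR$) this says that $\phi_t$ is non-$\RR$-covered if and only if one of $\cL^s,\cL^u$ has non-separated leaves, equivalently (since the leaf space of $\cG^s$ in $\cO_\phi$ is $\cL^s$, and likewise for $\cG^u$) if and only if one of the foliations $\cG^s,\cG^u$ has a pair of non-separated leaves in $\cO_\phi$. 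Thus it suffices to prove: $\cO_\phi$ contains a pair of adjacent lozenges $\iff$ one of $\cG^s,\cG^u$ has non-separated leaves.

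\emph{(a) Adjacent lozenges give non-separated leaves.} Let $\cD_1,\cD_2$ be adjacent, sharing a side $S$; then $\cD_1,\cD_2$ lie on opposite sides of the leaf carrying $S$, and $S$ is a half-leaf of one of the two foliations, say of $\cG^u$, with finite endpoint the common corner and a ``perfect-fit end'' at the other. Near that end, inside $\cD_1$ the side $S$ makes a perfect fit with the stable half-leaf $m_1$ through the other corner of $\cD_1$, and likewise with a stable half-leaf $m_2$ inside $\cD_2$, with $m_1,m_2$ on opposite sides of $S$. Choosing $w_n\in S$ tending to the perfect-fit end, the leaves $\cG^s(w_n)$ accumulate both on the leaf through $m_1$ and on the leaf through $m_2$; these are distinct leaves of $\cG^s$, hence non-separated. (If instead $S\subset\cG^s$, the same argument yields non-separated leaves of $\cG^u$.) By the reduction, $\phi_t$ is non-$\RR$-covered.

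\emph{(b) Non-separated leaves give adjacent lozenges.} Assume $\phi_t$ is non-$\RR$-covered; by the reduction fix non-separated leaves $L\neq L'$ of $\cG^s$. By Theorem~\ref{teobranching} there is $\gamma\in\pi_1(M)\setminus\{\mathrm{id}\}$ fixing both $L$ and $L'$, and by Proposition~\ref{prop-propbifol} there are unique $\gamma$-fixed points $x\in L$, $x'\in L'$, distinct because $L\neq L'$. By the theorem of Fenley stated just above (from \cite{Fenley-loz}), applied to the distinct $\gamma$-fixed points $x,x'$, there is a chain of lozenges $\cC=\cD_1\cup\cdots\cup\cD_k$ joining $x$ to $x'$ whose corners are all fixed by $\gamma$. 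I claim $\cC$ cannot be a string, which finishes the proof since a chain that is not a string contains, by definition, a pair of adjacent lozenges. Suppose $\cC$ is a string, with corners $x=p_0,p_1,\dots,p_k=x'$ where $\cD_i$ has corners $p_{i-1},p_i$. Since each $p_i$ is $\gamma$-fixed and lies on a $\gamma$-fixed stable leaf, which carries a single $\gamma$-fixed point, the leaves $\cG^s(p_0),\dots,\cG^s(p_k)$ are pairwise distinct; and because in a string consecutive lozenges meet only at a corner, across opposite quadrants, one can choose in each $\cD_i$ a transversal to $\cG^s$ (an arc meeting each leaf at most once) from a boundary point on $\cG^s(p_{i-1})$ to one on $\cG^s(p_i)$ and splice them through the $p_i$ into one embedded arc $\tau\subset\cO_\phi$, transverse to $\cG^s$ in its interior, with one endpoint on $L$ and one on $L'$, meeting each leaf of $\cG^s$ at most once. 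Projecting $\tau$ to $\cL^s$ produces an embedded arc whose endpoints are the points represented by $L$ and $L'$; equivalently, $\tau$ projects to a monotone arc in the oriented leaf space running between $L$ and $L'$. But two non-separated points of a simply connected $1$-manifold cannot be the endpoints of such an arc, a contradiction; hence $\cC$ contains a pair of adjacent lozenges.

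\emph{Where the work is.} The main obstacle is the last step of (b): verifying rigorously that a string of lozenges supports a monotone transversal all the way from $L$ to $L'$ — that the pieces splice through the turning corners into something still embedded, still transverse to $\cG^s$, and re-crossing no stable leaf (this is exactly where the ``opposite quadrant'' geometry and the distinctness of the corner leaves are used) — together with nailing down the elementary but easy-to-botch topological fact that non-separated points of a $1$-manifold are never the two endpoints of an embedded (monotone) arc, which I would check in foliated charts using that the arc approaches each endpoint from a definite side and that $\cO_\phi\cong\RR^2$ is simply connected. Everything else is a direct application of Theorems~\ref{teo.rcoveredboth} and \ref{teobranching}, Proposition~\ref{prop-propbifol}, and the chain-of-lozenges theorem.
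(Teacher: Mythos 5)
Your part (a) is exactly the observation the paper makes just before the statement (the two stable half-leaves making a perfect fit with the shared side, from opposite sides of it, are limits of the stable leaves through points of that side escaping to its ideal end, hence non-separated), so that direction is fine. For the substantive direction your route is genuinely different from the paper's: the paper does not take an arbitrary chain and rule out that it is a string; it re-runs the explicit construction from the chain-of-lozenges theorem starting from the fixed points $x,x'$ on the non-separated leaves $L,L'$, uses that no leaf can separate two non-separated leaves to conclude that the first stable leaf produced by the construction is already $L'$ (i.e.\ $L_1=L'$), and then observes that the successive lozenges produced all share the unstable half-leaf of $y_1$, so adjacency is exhibited directly. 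Your argument-by-contradiction (corners on non-separated stable leaves cannot be joined by a string) is a true statement and would also yield the theorem, so the strategy is viable; its advantage is that it only uses the chain theorem as a black box, while the paper's construction gives the adjacent lozenges concretely and avoids any global analysis of strings.

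The catch is that the two points you defer are precisely where the mathematics lives, and as stated they are not routine. First, the ``elementary'' $1$-manifold fact is delicate because of sides: an injective arc in the leaf space rules out a non-separated pair only if the arc approaches each endpoint from the side on which the witnessing sequence $\ell_n$ accumulates. What saves you is specific to your situation: since $\ell_n$ converges to both $L$ and $L'$, the $\ell_n$ eventually lie in the region between $L$ and $L'$ and accumulate on each from that side; and since your transversal $\tau$ meets each stable leaf at most once and has its endpoints on $L$ and $L'$, its interior cannot re-cross $L$ or $L'$ and hence also lies in that region. Then for large $n$ the leaf $\ell_n$ crosses $\tau$ both near $x$ and near $x'$, contradicting the ``at most once'' property; without this side control the abstract statement you invoke does not close the argument. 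Second, the claim that the spliced arc meets each stable leaf at most once across the whole string needs more than the remark that consecutive non-adjacent lozenges share no crossing leaf: you must also exclude a stable leaf meeting two non-consecutive lozenges, and rule out $\cG^s(p_i)$ entering later lozenges. This is where the opposite-quadrant geometry has to be developed into a nesting statement (the quadrant at $p_i$ containing $\cD_i$, together with $p_i$, is contained in the quadrant at $p_{i+1}$ containing $\cD_{i+1}$; consequently all earlier lozenges and corner leaves sit in the quadrant at $p_j$ opposite to the one containing $\cD_{j+1}$, and no stable leaf has points in two opposite open quadrants of a point). With these two additions your proof works; incidentally, the same nesting argument already shows directly that $\cG^s(p_0)$ and $\cG^s(p_k)$ lie in opposite open quadrants at $p_{k-1}$ and hence are separated, which lets you dispense with the transversal altogether.
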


\begin{proof}[Sketch of the proof]
When $\phi_t$ is not $\RR$ covered we can consider $L$ and $L'$ non-separated leaves with fixed points $x,x'$ by some non trivial deck transformation $\gamma$ as given in Theorem \ref{teobranching}. 

We use the notation of the proof of the previous theorem, so, we can assume that $L_1=L'$ in that case and since $L$ and $L'$ are non-separated, we know that $x$ and $x'$ are not the corners of a lozenge. So, we need to show that the chain of lozenges claimed between $x$ and $x_1$ in the previous proof is made by adjacent lozenges and we will be done. 

Note that $y_1 \neq x'$ and thus the proof is not complete. Let us see how to construct the next lozenge. For this, we take the weak unstable manifolds intersecting the half weak stable leaf of $y_1$ (which is not the boundary of the lozenge with corners $x,y_1$) and look at the boundary leaf $F_1$ separating the set from $x_1=x'$. If $x_1 \in F_1$ then we get that there is a lozenge with corners $y_1$ and $x_1$ and which has the same half weak unstable of $y_1$ than the previous one (thus adjacent). Else, $F_1$ separates the weak unstable of $y_1$ from $x_1$ and is fixed by $\gamma$ and we get a lozenge, sharing the same side, with some new fixed point $y_2$. This gives the adjacent lozenges.  
\end{proof}

It is also proved in \cite{Fenley-branch} that when projected to $M$, only finitely many leaves of $\cF^{ws}$ can lift to branching leafs (i.e. which are non-separated from some other leaf). The results here with some additional considerations provide further structure to the branching behavior of the leaf space of the lift of an Anosov foliation to the universal cover. 

\subsection{One step up map of skewed-$\RR$-covered Anosov flows}\label{ss.onestepup}

Consider a skewed-$\RR$-covered Anosov flow $\phi_t : M \to M$ with orientability assumptions for simplicity. We use the model of the band for its bifoliated plane, that is, $\cO_\phi$ is identified with $\cB=\{(x,y) \in \RR^2 \ : \ x-1 < y < x+1\}$, the unstable foliation $\cG^u$ is identified with horizontal lines $\{y=c\}_c$ and stable foliation $\cG^s$ is identified with vertical lines $\{ x = c\}_c$. 

The orbital equivalence class of $\phi_t$ induces an action $A_\phi$ of $\pi_1(M)$ on $\cB$ which determines the flow $\phi$ up to orbit equivalence (recall Theorem \ref{teo-bifolclas}). 

We will see the following: 

\begin{prop}
There exists a homeomorphism $\eta: \cB \to \cB$ which is commutes with the action $A_\phi$ and such that preserves $\cG^s$ and $\cG^u$. This homeomorphism then induces a self orbit equivalence of $\phi_t$ to itself. 
\end{prop}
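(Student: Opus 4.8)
The plan is to write $\eta$ down explicitly using the lozenge structure of the band, and then to feed it into Theorem~\ref{teo-bifolclas}. I use the model of the statement throughout: $\cO_\phi=\cB=\{(x,y)\in\RR^2:x-1<y<x+1\}$, with $\cG^s$ the vertical lines $\{x=c\}$ and $\cG^u$ the horizontal lines $\{y=c\}$.

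First I would build an \emph{elementary step} $\eta_0\colon\cB\to\cB$ out of perfect fits. Given $p=(c,d)\in\cB$, the stable leaf $\cG^s(p)=\{x=c\}$ is a vertical segment with two ideal endpoints on $\partial\cB$; let $F^{+}(p)\in\cG^u$ be the unique unstable leaf making a perfect fit with $\cG^s(p)$ at its upper ideal endpoint (in the band, $F^{+}(p)=\{y=c+1\}$), and symmetrically let $G^{+}(p)\in\cG^s$ be the unique stable leaf making a perfect fit with $\cG^u(p)$ at its right ideal endpoint (in the band, $G^{+}(p)=\{x=d+1\}$). These two leaves meet in exactly one point, and I define $\eta_0(p)$ to be that point; concretely $\eta_0(x,y)=(y+1,\,x+1)$, which is a homeomorphism of $\cB$ with inverse $(u,v)\mapsto(v-1,\,u-1)$ (the same recipe applied at the opposite ideal endpoints). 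By construction $p$ and $\eta_0(p)$ are exactly the two corners of a lozenge, so this makes sense precisely because, in the skewed-$\RR$-covered case, every point is the corner of a lozenge.

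The essential point is that $\eta_0$ \emph{interchanges} the two foliations: it carries $\cG^s(p)$ onto $F^{+}(p)\in\cG^u$ and $\cG^u(p)$ onto $G^{+}(p)\in\cG^s$. I therefore put $\eta:=\eta_0\circ\eta_0$; in the band coordinates this is just the translation $(x,y)\mapsto(x+2,y+2)$, so $\eta$ is a homeomorphism of $\cB$ that preserves each of $\cG^s$ and $\cG^u$ together with their transverse coorientations, and which moves a point ``one full step up the chain of lozenges''. Equivariance is then automatic: every ingredient of $\eta_0$ --- the two foliations, the ends of a leaf, the relation of making a perfect fit, hence the assignments $p\mapsto F^{+}(p)$ and $p\mapsto G^{+}(p)$ --- is invariant under $A_\phi$, so $\gamma\,\eta_0(p)=\eta_0(\gamma p)$ for all $\gamma\in\pi_1(M)$, and the same holds for $\eta=\eta_0^2$. (One cannot shortcut to the naive translation $(x,y)\mapsto(x+1,y+1)$: it fails to commute with $A_\phi$ since the latter is not affine, and an elementary lozenge step anyway takes a stable leaf to an unstable one, so returning to the same foliation costs two steps.) Finally, Theorem~\ref{teo-bifolclas} applied with $\phi^{1}_t=\phi^{2}_t=\phi_t$ and $H=\eta$ promotes this $\pi_1(M)$-equivariant homeomorphism of the bifoliated plane to an orbit equivalence of $\phi_t$ with itself, homotopic to $\mathrm{id}_M$; since $\eta$ respects the labelling $\cG^s$ versus $\cG^u$ and all the coorientations, this orbit equivalence is orientation-of-orbits preserving.

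The part I expect to need the most care is the skewed-specific input that legitimizes the construction of $\eta_0$: that each leaf of $\cG^s$ (resp. $\cG^u$) meets only an open interval of leaves of the other foliation, hence closes up to a segment with exactly two ideal endpoints; that at each such endpoint there is a \emph{unique} perfect fit with a leaf of the other foliation; and that this perfect-fit leaf, and hence $\eta_0$, depends continuously on $p$. All of this is exactly what the normal form $\cB=\{x-1<y<x+1\}$ with vertical and horizontal foliations encodes --- there the maps $p\mapsto F^{+}(p)$, $p\mapsto G^{+}(p)$ and $\eta_0$ are even affine --- so once one invokes that the bifoliated plane of a skewed-$\RR$-covered flow has this form (part of the analysis behind Theorem~\ref{teo.rcoveredboth}), the remaining verifications are routine. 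It is worth noting that the self orbit equivalence obtained need not be nontrivial --- for geodesic flows $\eta_0^2$ is realised by a deck transformation --- but the proposition asserts only existence.
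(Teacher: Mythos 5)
Your construction is the same as the paper's: the elementary step $(x,y)\mapsto(y+1,x+1)$ (which the paper defines via the band coordinates and justifies equivariantly by the uniqueness of the lozenge with corners $p$ and $\eta_0(p)$, exactly your perfect-fit argument), squared to get the translation $(x,y)\mapsto(x+2,y+2)$ preserving both foliations, with the self orbit equivalence then coming from Theorem~\ref{teo-bifolclas}. The proof is correct and follows essentially the same route, with your explicit appeal to the orientability/coorientation assumptions matching the paper's standing hypothesis in that subsection.
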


\begin{proof}
We define first $\eta_1: \cB \to \cB$ which maps $(a,b) \in \cB$ to the intersection between $\RR \times \{a+1\}$ and $\{b+1\} \times \RR$, that is, the image is $(b+1,a+1)$. Note that since $(a,b) \in \cB$ then so is this intersection. Note however that $\eta_1$ maps $\cG^s$ to $\cG^u$ and viceversa (it maps horizontals to verticals and vice versa). We will then consider $\eta = \eta_1 \circ \eta_1$ which indeed makes $(a,b) \mapsto (a+2, b+2)$ and now preservers both foliations. 

We must show that this map commutes with the action of $A_\phi$. This is just because for every $(a,b) \in \cB$ we have that $(a,b)$ and $\eta_1(a,b)$ are the corners of a lozenge, which is unique with the given orientation. Then, since $A_\phi$ is an action induced by an Anosov flow it must preserve horizontals and verticals, thus, it has to map lozenges into lozenges, and by uniqueness we get that $\eta_1 \circ A_\phi(\gamma) = A_\phi(\gamma) \circ \eta_1$. 
\end{proof}

The homeomorphism $\eta$ is sometimes called the \emph{one-step up map}. Note that in general, the map $\eta$ is not a deck transformation, but there are cases where it is (and in fact, these are characterized by being orbit equivalent to geodesic flows or finite lifts\footnote{To be more precise, if there is such a deck transformation, this implies that the deck transformation is in the center of the fundamental group, which implies that the manifold is Seifert (see for instance \cite[Appendix A]{BFFP2}). In such manifolds, Anosov flows have been clasified \cite{Ghys,Barbot-OE} and this produces the claim.})

\section{Geometry of leaves}\label{s.geomleaves} 

Here, we describe the geometry of the individual leaves, and how the orbits and the strong foliations subfoliate them. This was done for Anosov flows in \cite{Fenley} and extended for topological Anosov flows in \cite{BFP} with a different argument. Here we will sketch the ideas from \cite{BFP} in order to give a description of the flowlines inside each leaf. 

\subsection{Candel's theorem}

We recall here an important result of Candel (see \cite{CandelConlon, Calegari}). There are some subtleties on the regularity of foliations that we will ignore here and refer the reader to \cite[\S 5]{BFP} for a detailed account. 

\begin{teo}\label{teo-candel}
Let $\cF$ be a foliation without a transverse invariant measure. Then, there is a metric $g$ in $M$ which makes every leaf of constant negative curvature. In fact, to get such a metric, it is enough to have that every transverse invariant measure has negative Euler characteristic\footnote{When there is a transverse invariant measure, it makes sense to integrate continuous function to get some sort of average of the integral in leaves, where the average is taken with respect to the transverse measure. The Euler characteristic is the integral of the curvature on leaves, by a sort of Gauss-Bonnet argument. See \cite{Calegari} for more details.}. 
\end{teo}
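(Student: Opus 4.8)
The plan is to reduce the statement to solving a single leafwise elliptic equation on all of $M$ at once, and then to control the transverse behaviour of its solution using the dichotomy furnished by transverse invariant measures of nonnegative Euler characteristic together with a leafwise Gauss--Bonnet argument. First I would fix a smooth Riemannian metric $g_0$ on $M$ and restrict it to the leaves of $\cF$; the leafwise Gaussian curvature is then a continuous function $k_0 : M \to \RR$, smooth along leaves, and by compactness of $M$ the leaves have uniformly bounded local geometry. A leafwise metric conformal to $g_0|_{\cF}$ of constant curvature $-1$ has the form $e^{2u} g_0|_{\cF}$ for a continuous, leafwise smooth $u : M \to \RR$, and the two-dimensional conformal change of curvature formula turns the problem into solving
\[
\Delta_{\cF} u = k_0 + e^{2u}
\]
on $M$, where $\Delta_{\cF}$ is the divergence-form leafwise Laplace--Beltrami operator of $g_0$. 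So the theorem reduces to producing one continuous, leafwise smooth global solution $u$.

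Next I would construct $u$ by the Perron process, as the supremum of all leafwise subsolutions lying below a fixed supersolution. A supersolution is easy to exhibit once one has the uniform geometry bound above (for instance a suitable large constant, or a function built from the distance to a fixed leafwise ball), and a subsolution can likewise be produced from the geometry of $g_0$. Interior elliptic regularity then shows that the Perron function is leafwise smooth and solves the equation leaf by leaf; the real content is that it is globally continuous on $M$ and satisfies a priori bounds independent of the leaf.

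This last point is where the hypothesis enters, and where I expect the difficulty to concentrate. An upper bound $u \le C$ follows from an Ahlfors--Schwarz / maximum-principle argument, exploiting that leaves cannot be too small. The delicate estimate is a uniform lower bound: a failure of it would mean points $x_n \in M$ at which the conformal factor degenerates, and one captures this by averaging the equation $\Delta_{\cF} u = k_0 + e^{2u}$ over leafwise metric balls of growing radius centred at the $x_n$. Since the foliated space is compact, the leafwise heat semigroup has stationary (harmonic) measures, and a F{\o}lner / harmonic-measure limit of these averages produces a probability measure $\mu$ on $M$ which is leafwise harmonic and which one then reduces to, or treats like, a transverse invariant measure; integrating the equation against $\mu$ annihilates the Laplacian term, and the degeneration near the $x_n$ forces the sign of $\int_M k_0\, d\mu$ to be incompatible with negativity of $\chi(\mu)$ via leafwise Gauss--Bonnet, that is, one would obtain a transverse invariant measure of nonnegative Euler characteristic, contradicting the hypothesis. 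Hence $u$ is uniformly bounded; the same compactness and averaging machinery upgrades pointwise control to transverse continuity of $u$, and a final rescaling normalises the constant leafwise curvature to $-1$. The genuinely hard part is thus this last step: arranging the averaging argument so that the limit measure is honestly (harmonic, hence effectively) transverse invariant, carries a well-defined Euler characteristic, and produces an inequality of the right sign --- everything else is the classical conformal-uniformization package made to depend continuously on a transverse parameter.
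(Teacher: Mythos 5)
The paper itself gives no proof of this statement: it is Candel's uniformization theorem, quoted with references to \cite{CandelConlon,Calegari}, so your proposal has to be measured against the known proof, whose overall architecture (leafwise conformal factor, a priori bounds, a contradiction producing an invariant measure of nonnegative Euler characteristic, elliptic estimates for transverse regularity) you have reproduced correctly. Two of your steps, however, have genuine gaps. First, the Perron setup. A subsolution of $\Delta_{\cF}u=k_0+e^{2u}$ on a leaf $L$ is exactly a conformal metric on $L$ of curvature $\leq -1$, and by the Ahlfors--Schwarz comparison (applied to an exhaustion of the universal cover by Euclidean disks, whose Poincar\'e metrics tend to zero) no such metric exists when $L$ is conformally parabolic or spherical. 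So the claim that a subsolution ``can be produced from the geometry of $g_0$'' is false in general: it silently assumes every leaf is already of hyperbolic conformal type, which is precisely what the hypothesis on transverse invariant measures has to rule out. As written, your Perron family may be empty on some leaves and the scheme produces nothing to estimate. The standard repair (Candel's, and the one in the references above) is to solve Dirichlet problems on an exhaustion of each leaf by disks or compact subdomains, where solutions always exist, and to show that the monotone limit stays finite --- equivalently, to work directly with the leafwise Poincar\'e metric and use the hypothesis to exclude its vanishing.

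Second, the averaging step. The hypothesis concerns transverse \emph{invariant} measures, but a limit of leafwise heat or harmonic averages is only a harmonic measure in the sense of Garnett, which is not invariant in general, and for a merely harmonic measure neither the vanishing of the Laplacian term nor the identification of $\int k_0\,d\mu$ with an Euler characteristic of the kind appearing in the hypothesis is automatic. To obtain a contradiction you must produce an honest foliation cycle: either run the Goodman--Plante construction, showing that degeneration of the conformal factor at the points $x_n$ yields leafwise domains with isoperimetric ratio tending to zero (a F\o lner sequence), so the normalized averages converge to a transverse invariant measure while Gauss--Bonnet on these domains forces $\int k_0\,d\mu\geq 0$; or use the harmonic-measure route and add the argument that a harmonic measure concentrated on conformally parabolic leaves has leafwise constant densities and is therefore invariant. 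You flag this as the crux, which is fair, but as stated the measure you build need not be invariant and hence need not contradict the hypothesis. The remaining ingredients --- the uniform upper bound via Ahlfors--Schwarz barriers on leafwise balls of fixed radius, and transverse continuity from two-sided bounds together with leafwise Harnack/Schauder estimates and the uniqueness (maximality) of the complete curvature $-1$ metric --- are fine and match the classical argument.
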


In fact, in \cite{FP-ergo} it is shown (as an application of this result) that any minimal foliation in a manifold with non virtually solvable fundamental group must admit a metric which makes every leaf into a hyperbolic surface. For Anosov flows, we will follow a different approach (which holds also in the non-transitive case) which consists in showing by hand that every leaf is Gromov hyperbolic. 

The nice thing about having this uniformization is that it allows us to compactify each leaf of the lifted foliation to the universal cover with a circle, and this circle is useful to look at the \emph{dynamics at infinity} and sometimes obtain some coarse information that can be read inside the actual manifold.

\subsection{Gromov hyperbolicity of leaves} 

The fact that leaves are of exponential growth of volume is reminiscent of the proof of Theorem \ref{t.margulis}, the following in fact allows one to apply Theorem \ref{teo-candel}. 

\begin{lema}\label{lem-gromov}
The area of a disk of radius $R$ inside a leaf of $\wfwu$ grows uniformly exponentially with $R$. 
\end{lema}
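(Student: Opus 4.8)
The plan is to exhibit inside any leaf of $\wfwu$, centered at any of its points, an embedded ``rectangle'' of exponentially large area but only linearly growing diameter, by combining the uniform exponential expansion of the strong unstable direction under the forward flow with the bounded speed of the flow. Fix a leaf $L$ of $\wfwu$ in $\mt$. By the unstable analogue of Theorem~\ref{teo.stableman}, $L$ is simultaneously foliated by the orbits of $\tilde\phi_t$ and by the strong unstable leaves, and since $E^{uu}\oplus\RR X=E^{wu}$ these two line foliations of $L$ are uniformly transverse; moreover $L$ is the product $W^{uu}(x)\times\RR$ via $(y,s)\mapsto\tilde\phi_s(y)$ (no periodic orbits exist in $\mt$). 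Using compactness of $M$, fix once and for all a scale $\delta>0$ so that every strong unstable arc of length $\delta$ is embedded and lies in a single foliation box, and given $x\in L$ pick such an arc $J\subset W^{uu}(x)$ based at $x$, parametrised by arc length $\gamma\colon[0,\delta]\to L$ with $\gamma(0)=x$.

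For $t>0$ consider the region $R_t=\{\tilde\phi_s(\gamma(u)):0\le s\le t,\ 0\le u\le\delta\}\subset L$. By the product structure the map $F(s,u)=\tilde\phi_s(\gamma(u))$ is injective, so $\Vol(R_t)=\int_0^t\!\int_0^\delta \lvert\partial_sF\wedge\partial_uF\rvert\,du\,ds$ with $\partial_sF=X$ and $\partial_uF=D\tilde\phi_s(\dot\gamma(u))$. Compactness bounds $\lVert X\rVert$ and the angle $\angle(X,E^{uu})$ away from $0$, and iterating the Anosov inequality~\eqref{eq:AnosovSmooth} (with a uniform correction for the fractional part of $s/T$) gives $\lVert D\tilde\phi_s(\dot\gamma(u))\rVert\ge c\,2^{s/T}$ for all $s\ge0$; hence $\Vol(R_t)\ge c'\,2^{t/T}$ with $c,c'>0$ uniform.

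To bound the diameter, note each point $\tilde\phi_s(\gamma(u))$ is joined to $x$ inside $L$ by $\gamma|_{[0,u]}$ (length $\le\delta$) followed by an orbit arc of length at most $Ct$, where $C=\sup_M\lVert X\rVert$; thus $R_t\subset B^L_{\delta+Ct}(x)$, the intrinsic ball of radius $\delta+Ct$ in $L$. Given $R\ge\delta$, taking $t=(R-\delta)/C$ yields $\Vol(B^L_R(x))\ge c'\,2^{(R-\delta)/(CT)}$. Since $\delta,C,T,c,c'$ depend only on the Anosov data and the compactness of $M$, this lower bound is uniform over all leaves $L$ and points $x\in L$, which is the lemma.

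I expect the only genuinely delicate points to be the bookkeeping for uniformity — pinning down the single scale $\delta$, the lower bound on $\lVert X\rVert\sin\angle(X,E^{uu})$, and the speed bound $C$ purely from compactness, together with the injectivity of $F$ — and, in the merely topological Anosov setting, first importing the smooth structures on leaves and the uniform transversality of the two subfoliations from \cite[\S5]{BFP} and replacing infinitesimal expansion by the uniform asymptotic divergence of orbits within a strong unstable leaf.
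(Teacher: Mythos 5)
Your argument is correct and reaches the same basic mechanism as the paper --- uniform expansion of strong unstable vectors plus non-recurrence inside a leaf of $\wfwu$ forces exponential area in a ball of linearly growing radius --- but the implementation is genuinely different. The paper flows a strong unstable arc for time $t$ to get a curve of length $\sim e^t$, uses a Poincar\'e--Bendixson argument to see that this curve cannot cross the same foliation box twice, and counts boxes: the $\sim e^t$ distinct boxes met by the curve each contribute a definite leafwise area and all sit inside the ball of radius $\sim 2t$. You instead sweep an embedded rectangle $F(s,u)=\tilde\phi_s(\gamma(u))$ and integrate the area form, lower-bounding the Jacobian by $\inf\|X\|\cdot\sin\angle(X,E^{uu})\cdot c\,2^{s/T}$ and the diameter by $\delta+Ct$; this is cleaner quantitatively and makes the uniformity of the constants transparent, but it puts all the weight on injectivity of $F$, i.e.\ on the product structure $L\cong W^{uu}(x)\times\RR$ of a leaf in $\mt$. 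Be aware that your parenthetical justification (``no periodic orbits exist in $\mt$'') is not by itself enough: what you actually need is that no orbit in $\mt$ returns to the same strong unstable leaf, which is exactly the non-recurrence statement the paper extracts from Poincar\'e--Bendixson (equivalently, Hausdorffness of the orbit space inside a leaf, as in the proof of Theorem~\ref{teo-bifoliated}). Since that is a standard fact about Anosov flows, your proof stands, but you should cite or prove it rather than deduce it from the absence of periodic orbits; the paper's box-counting version has the mild advantage of needing only the one-dimensional non-recurrence of the strong unstable curve and of adapting more directly to the merely topological Anosov setting, where your infinitesimal estimate must be replaced by the coarse divergence/box-counting data you mention from \cite[\S 5]{BFP}.
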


\begin{proof}
Note that by Poincar\'e-Bendixon's theorem, a strong unstable manifold cannot intersect the same foliation box twice. Since flowing time $t$ makes the length of an unstable manifold grow of the order of $e^t$, we get that in a ball of radius $2t+1$ we have a curve of length $\sim e^t$ and since the curve cannot cross the same box twice, it means that the ball of radius $2t+2$ has area of the order of $e^t$ as desired. 
\end{proof}

This lemma allows one to deduce that every transverse invariant measure (if any) must have negative euler characteristic in order to apply Theorem  \ref{teo-candel}.  Note that in the Anosov flow case one can also use the transverse contraction/expansion of the foliations to show that there cannot be a transverse invariant measure. Transverse invariant measure cannot be atomic (because that would force compact leaves) and thus, would produce some transversal which is neither contracted nor expanded by holonomy, which is incompatible with the dynamics of an Anosov flow. 

\subsection{Flowlines are quasi-geodesic} 

In \cite{BFP} it is shown that for topological Anosov flows, flowlines are uniform quasi-geodesic inside their weak stable/unstable leaves. This extends a result from \cite{Fenley} for Anosov flows. 

\begin{prop}
Let $\phi_t: M \to M$ be a (topoloogical) Anosov flow. Then, flowlines are uniform quasi-geodesic inside their weak stable/unstable leaves. 
\end{prop}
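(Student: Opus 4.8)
The plan is to work in the universal cover and in a fixed weak leaf, say an unstable leaf $F \in \wfwu$, which by Lemma~\ref{lem-gromov} together with Candel's Theorem~\ref{teo-candel} can be equipped with a metric of constant negative curvature (after an \emph{a priori} reduction to the case where there is no transverse invariant measure, which follows from the transverse expansion/contraction of the foliations as indicated above). Thus $F$ is quasi-isometric to $\HH^2$, and it suffices to prove that every flowline $o = \tilde\phi_\RR(p) \subset F$ is a uniform quasi-geodesic for this metric. Since the constants in Candel's uniformization are uniform over leaves, once I obtain quasi-geodesic constants depending only on the flow (through compactness of $M$), the statement for all leaves follows.

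The key point is a \emph{local-to-global} argument. First I would establish the local estimate: there is $\eps_0>0$ and $C_0>0$, depending only on the flow, such that for any $p$ and any $t$ with $|t|$ bounded by a fixed constant, the flow segment $\tilde\phi_{[0,t]}(p)$ has length comparable (up to $C_0$) to the intrinsic distance in $F$ between its endpoints; this is immediate from compactness of $M$ and continuity of the metric on leaves. The heart of the matter is to promote this to the statement that $o$ is a quasi-geodesic globally. Here I would use the \emph{Morse/stability lemma} strategy adapted to the present setting: suppose, for contradiction, that flowlines are not uniform quasi-geodesics. Then there are flow segments that are arbitrarily long but whose endpoints are joined by a geodesic in $F$ that is much shorter. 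Rescaling (i.e. looking through the action of $\pi_1(M)$, since there are only finitely many leaves up to the action and the situation is cocompact in an appropriate sense) and passing to a limit, one produces a flowline that is contained in a bounded neighbourhood of a point, hence a recurrence/periodicity phenomenon that contradicts the dynamics of the Anosov flow. In particular I would exploit the fact that in a weak unstable leaf, the strong unstable foliation $\wfwu$ (one-dimensional inside $F$) together with the orbit direction gives a product structure on $F$, and strong unstable arcs expand exponentially under the flow (Theorem~\ref{teo.stableman}(6) for the stable case, dually here); a flowline that fails to be quasi-geodesic would have to "turn back" relative to this product structure, forcing two points on the same orbit to lie on strong unstable leaves that are too close, which contradicts Poincar\'e--Bendixson applied as in Lemma~\ref{lem-gromov}.

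More concretely, the argument I have in mind runs as follows. Fix the product structure on $F$: every point of $F$ is $\tilde\phi_s(\xi)$ for a unique $s\in\RR$ and $\xi$ in a fixed strong unstable leaf $W \subset F$, and this identifies $F$ with $W \times \RR$. The flowline through $\tilde\phi_{s_0}(\xi_0)$ is $\{\xi_0\}\times\RR$. Strong unstable holonomy along the flow expands, so the "width" coordinate on $W$ grows exponentially in forward flow time and decays exponentially in backward time. I would first show the orbit is a quasi-geodesic ray in forward time: a geodesic in $F$ from $\tilde\phi_0(\xi_0)$ to $\tilde\phi_T(\xi_0)$ has length at least that of its projection to the $\RR$-factor — but the projection is distance-nonincreasing only after normalizing, so one needs the exponential expansion to control how the $W$-coordinate can drift. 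The exponential separation of nearby orbits in the leaf (they lie on distinct strong unstable leaves, which spread apart) forces any path staying near $o$ for a long flow-time to have length growing linearly in $T$ (in fact comparably to $T$), giving the lower bound $d_F(\tilde\phi_0(\xi_0),\tilde\phi_T(\xi_0)) \geq c\,T - c'$. The upper bound $\leq T$ is automatic since the flow segment itself is a path of length $\asymp T$. Combining with the analogous backward estimate and the local estimate gives the uniform quasi-geodesic property on all of $o$.

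\textbf{Main obstacle.} The delicate point is the \emph{lower} bound $d_F(\tilde\phi_0(p),\tilde\phi_T(p)) \gtrsim T$: a priori the weak leaf $F$ could be very "thin" so that a geodesic shortcut between two far-apart points of a flowline wraps through nearby parts of $F$ and stays short. Ruling this out is exactly where one needs to combine the Gromov hyperbolicity of $F$ (so that geodesics and quasi-geodesics are rigid — the Morse lemma) with the Anosov dynamics: a genuine shortcut would, upon renormalizing by the flow, limit onto a configuration contradicting either Poincar\'e--Bendixson (as in Lemma~\ref{lem-gromov}, a strong unstable arc re-entering a foliation box) or the non-existence of the relevant recurrence for an Anosov orbit. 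Making this compactness/limiting argument precise — in particular arranging that the limiting flowline and limiting "shortcut" live in an actual leaf and carry the limiting metric — is the technical crux, and it is essentially the place where the argument of \cite{BFP} replaces the earlier approach of \cite{Fenley}. Once that is in hand, everything else (uniformity over leaves, passing from rays to bi-infinite flowlines, transferring from the Candel metric to the ambient-induced metric) is routine given the results already cited in the excerpt.
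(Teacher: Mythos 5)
Your plan identifies the right difficulty but does not close it, and the way you propose to close it has a genuine gap. The entire content of the proposition is the lower bound $d_F(p,\tilde\phi_T(p))\geq cT-c'$ with constants uniform over all leaves, and your argument for it rests on two things that are not available. First, you use the product structure of a weak unstable leaf by strong unstable arcs together with uniform exponential expansion of those arcs; but the proposition is stated for \emph{topological} Anosov flows, where the strong foliations are not part of the data and there is no exponential rate at all (the paper is explicit that in dimension 3 the strong foliations are essentially not used, and \cite{BFP} is precisely about removing such hypotheses). Even in the smooth case, expansion of strong unstable arc length does not by itself bound leafwise distance from below: the danger is exactly a geodesic shortcut that leaves a neighbourhood of the orbit, and exponential growth of an arc's length says nothing about how close its endpoints can come back together in the leaf metric. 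Second, the limiting argument you invoke to rule out shortcuts is not a proof as stated: translating long segments back to a fundamental domain and "passing to a limit" does not produce a flowline trapped near a point (the shortcut has length going to infinity as well, just sublinearly in $T$), and "recurrence/periodicity" is not a contradiction for an Anosov flow -- periodic orbits abound, and Poincar\'e--Bendixson as used in Lemma~\ref{lem-gromov} only yields area growth, not an obstruction to shortcuts. This local-to-global step is exactly the delicate point: the paper itself records that the direct argument attempted in \cite[\S 5.4]{BFP} had a problem found by K.~Mann, which is why its proof here does not argue this way at all.

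For contrast, the paper's proof is a reduction, not a dynamical argument: it quotes the general theorem of \cite{FP-Hsdff} that a one-dimensional foliation obtained as the intersection of two transverse foliations with Gromov hyperbolic leaves, whose lifted leaf space is Hausdorff, is leafwise uniformly quasi-geodesic; the hypotheses are supplied by Theorem~\ref{teo-bifoliated} (Hausdorff orbit space) and Lemma~\ref{lem-gromov} (Gromov hyperbolicity of leaves via Candel), plus a smoothing/regularity remark. If you want a self-contained argument along your lines, you would need to prove the shortcut-exclusion step in full -- essentially reproving the main theorem of \cite{FP-Hsdff} or Fenley's original argument from \cite{Fenley} (which, note, is restricted to genuine Anosov flows) -- so as written the proposal leaves the crux unproved.
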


\begin{proof}[Sketch of the proof]
We give a detailed outline of the proof because the proof given in \cite[\S 5.4]{BFP} has a problem pointed out to us by Katie Mann.  

There is a more direct way to prove this which is conceptually more satisfying. But here we just apply some results from \cite{FP-Hsdff} that give a shortcut to the result. Indeed, in that paper it is shown that if $\cG$ is a one dimensional foliation obtained as the intersection of two transverse foliations with Gromov hyperbolic leaves such that the leaf space of the lift of the foliation to the universal cover is Hausdorff must be by quasi-geodesics in each leaf. In our case, this applies thanks to Theorem \ref{teo-bifoliated} and Lemma \ref{lem-gromov}.

Note here that the results in \cite{FP-Hsdff} are stated for transverse continuous foliations tangent to continuous distributions. This is done just to simplify the setting (see in particular, \cite[Remark 5.6]{FP-Hsdff}), let us explain how to work in the general case. In general, one can always apply a smoothing to assume that one of the foliations has the desired regularity (see \cite{Calegari-smooth}). Then, one can see that the arguments in \cite{FP-Hsdff} never use the regularity of both foliations (this is pointed out in several remarks in that paper). To actually define the quasi-geodesic behavior, one needs to find a definition of quasi-geodesics in leaves that is independent on the smooth structure that one fixes, but this is quite standard, by choosing foliation boxes in $M$ and counting the number of boxes that a curve crosses in $\mt$ (note that in $\mt$ the leaves are properly embedded, so they do not intersect the same box twice if these are chosen small enough). 

\end{proof}

\subsection{Quasi-geodesic fans and the non-marker point}\label{ss.nonmarker}

Here we give a definition that describes the local picture of flowlines inside a weak stable/unstable leaf. 

\begin{defi}
Given a foliation $\cT$ by lines in a surface $H$ quasi-isometric to a hyperbolic disk $\HH^2$ we say that the foliation is a \emph{quasi-geodesic fan} if every curve in $\cT$ is a uniform quasi-geodesic and there is a point $\xi \in \partial H$ such that there is a bijection between the leaves of $\cT$ and points in $\partial H \setminus \{\xi\}$ which map each point in $\eta \in \partial H \setminus \{\xi\}$ to a leaf of $\cT$ whose endpoints are $\eta$ and $\xi$. The point $\xi \in \partial H$ will be called \emph{funnel point}  . 
\end{defi}

We also can have a description of the strong stable foliation (resp. strong unstable) inside a leaf: 

\begin{defi}
Given a foliation $\cT$ by lines in a surface $H$ quasi-isometric to a hyperbolic disk $\HH^2$ we say that the foliation is a \emph{horocyclic foliation} if the leaf space $H/_{\cT}$ is Hausdorff and there is a point $\xi \in \partial H$ so that every leaf $\ell \in \cT$ verifies that its closure in $H \cup \partial H$ is exactly $\ell \cup \{\xi\}$. 
\end{defi}

Putting together what has been discussed in the previous sections, it is not hard to obtain:

\begin{teo}\label{teo-insidestructure}
Let $\phi_t$ be an Anosov flow. Then, leaves of $\wfws$ are uniformly quasi-isometric to hyperbolic planes, the flowlines make a quasi-geodesic fan in each leaf transverse to the strong stable foliation which is an horocyclic foliation. Flowlines are oriented towards the funnel point. For leaves of $\wfwu$ the same result holds, only that flowlines are oriented against the funnel point. 
\end{teo}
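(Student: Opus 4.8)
\medskip\noindent\emph{Strategy of proof.} The plan is to assemble the pieces already in place. First I would combine Lemma~\ref{lem-gromov} with Candel's Theorem~\ref{teo-candel}: since disks in the leaves of $\cF^{ws}$ have uniformly exponential area growth, every transverse invariant measure of $\cF^{ws}$ has negative Euler characteristic, so there is a metric $g$ on $M$ for which each leaf of $\cF^{ws}$ has constant negative curvature. Lifting to $\mt$, where $\wfws$ is a foliation by planes (Novikov), each leaf becomes a complete, simply connected surface of constant negative curvature, hence isometric up to scale to $\HH^2$; and since $M$ is compact, $g$ is uniformly bi-Lipschitz to the ambient metric, so the induced leafwise path metrics are uniformly bi-Lipschitz and every leaf of $\wfws$ is uniformly quasi-isometric to $\HH^2$. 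This settles the first assertion. The Proposition that flowlines are uniform quasi-geodesics inside their leaves then says that, inside each $L\in\wfws$, the orbit foliation $\wfwu\cap L$ is a foliation by uniform quasi-geodesics; in particular each flowline is a quasi-geodesic line in its leaf and hence converges, forwards and backwards, to two distinct ideal points of $\partial L$.

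Next I would locate the funnel point. By Theorem~\ref{teo.stableman} one has $\cF^{ws}(x)=\bigcup_t\phi_t(\cF^{ss}(x))$, so any two orbits in a common weak-stable leaf $L$ are forward asymptotic; their forward quasi-geodesic rays are therefore at bounded distance and converge to a single ideal point $\xi_L\in\partial L$ independent of the orbit. This $\xi_L$ is the candidate funnel point, and by construction each flowline points toward it. What remains is to show that the map $\beta$ sending a flowline of $L$ to its \emph{other} (backward) ideal endpoint is a bijection of the orbit space of $L$ — which is a copy of $\RR$, Hausdorff by the key fact behind Theorem~\ref{teo-bifoliated} that $L\cap F$ is a single orbit — onto $\partial L\setminus\{\xi_L\}$, which is the quasi-geodesic fan statement; and, for the strong stable foliation $\widetilde{\cF^{ss}}\cap L$, that its leaf space is Hausdorff and homeomorphic to $\RR$ (immediate from $\cF^{ws}(x)=\bigcup_t\phi_t(\cF^{ss}(x))$ via the flow parameter) and that the closure of each strong stable leaf $\ell\subset L$ in $L\cup\partial L$ is exactly $\ell\cup\{\xi_L\}$, which is the horocyclic statement.

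The hard part will be these last two points, and this is exactly where the marker / non-marker-point analysis of \S\ref{ss.nonmarker} — due to \cite{Fenley} for Anosov flows and extended in \cite{BFP} to the topological setting — is indispensable. The underlying mechanism is that the forward flow uniformly contracts transversals to $\widetilde{\cF^{ss}}\cap L$, which allows one to attach to each ideal point of $L$ other than $\xi_L$ a well-defined, flow-equivariant ``marker'' on every nearby leaf, $\xi_L$ being the unique point admitting no marker; transporting this structure along the foliation, one deduces that distinct orbits of $L$ have distinct backward endpoints and that every point of $\partial L\setminus\{\xi_L\}$ is realized as such, so that $\beta$ is the desired bijection, while a strong stable leaf — whose points are forward asymptotic to one reference orbit and which the forward flow contracts leafwise onto that orbit — can accumulate on $\partial L$ only at $\xi_L$ (and, being unbounded, it does accumulate there). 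Granting this, $\wfwu\cap L$ is a quasi-geodesic fan and $\widetilde{\cF^{ss}}\cap L$ a horocyclic foliation, each with funnel point $\xi_L$, and flowlines are oriented toward $\xi_L$. The statement for $\wfwu$ then follows by applying everything to the flow $\phi_{-t}$: orbits in a common weak-unstable leaf are now backward asymptotic, so the common ideal point is the backward endpoint of each flowline, that is, the funnel point, and with the original orientation the flowlines point away from it, as claimed.
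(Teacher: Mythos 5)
Your proposal is correct and takes essentially the same route the paper intends: the theorem is stated there as a consequence of ``putting together'' Candel's Theorem~\ref{teo-candel} with the exponential growth Lemma~\ref{lem-gromov}, the proposition that flowlines are uniform quasi-geodesics in their leaves, and the forward/backward asymptoticity of orbits, with the finer fan and horocyclic structure deferred to \cite{Fenley} and \cite{BFP}, which is exactly the assembly you carry out (including the flow-reversal argument for $\wfwu$). The only cosmetic difference is that you invoke the marker analysis of \S\ref{ss.nonmarker} for the hard step, while the paper logically derives Proposition~\ref{prop-nonmarker} \emph{from} this theorem; the underlying contraction mechanism you describe is the one used in \cite{Fenley,BFP}, so this is a matter of presentation rather than a gap.
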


\begin{figure}[ht]
\begin{center}
\includegraphics[scale=0.85]{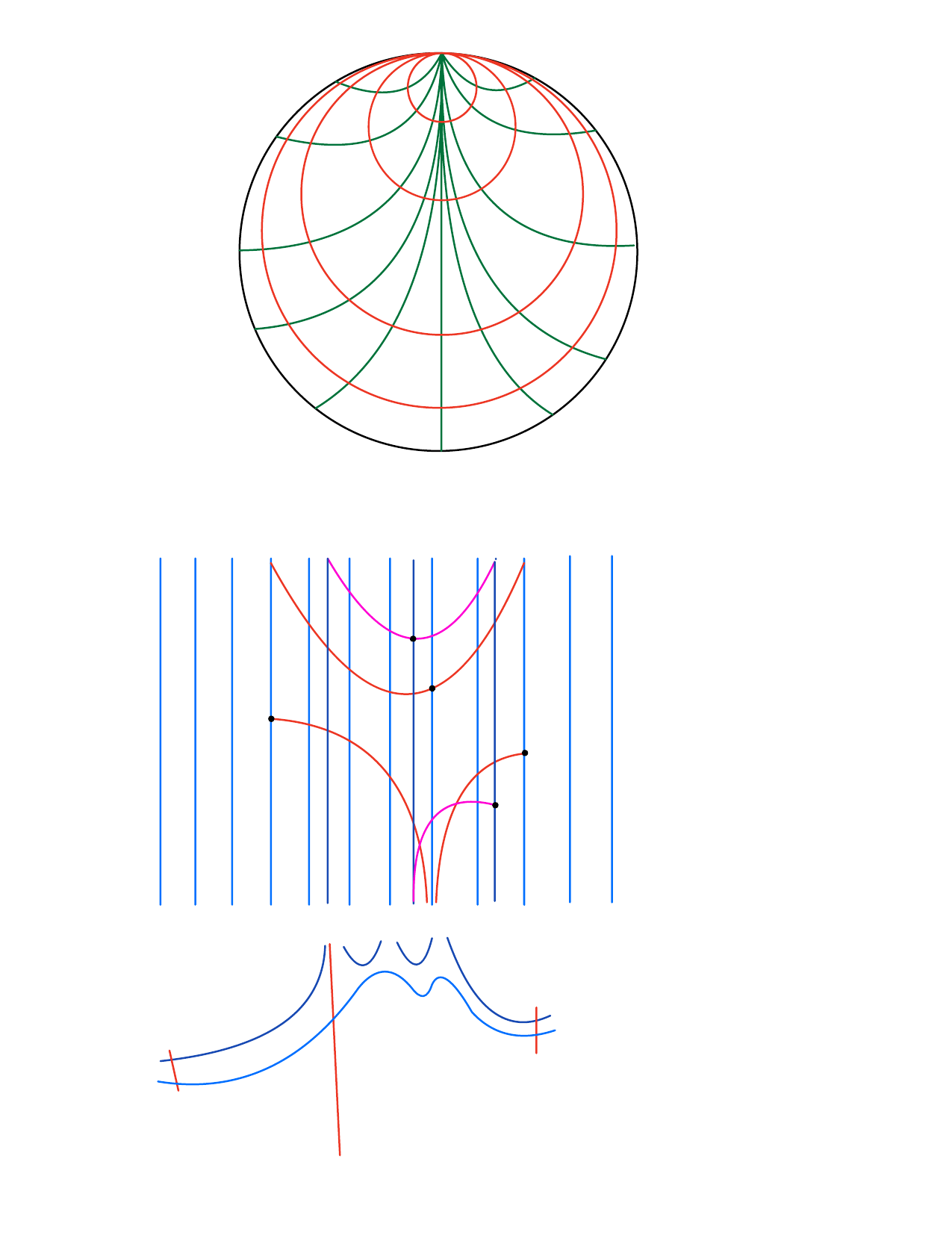}
\begin{picture}(0,0)
\end{picture}
\end{center}
\vspace{-0.5cm}
\caption{{\small The local figure of orbits and strong unstable leaves inside a weak-unstable manifold (orbits are pointing against the funnel point) }\label{f.qgfan}}
\end{figure}

It results from this theorem that for every leaf $L \in \wfws$ we can associate a point $\alpha(L) \in S^1(L)$ where $S^1(L) \cong \partial L$ denotes the Gromov boundary of the leaf $L$ (which is homeomorphic to a circle). The point $\alpha(L)$ corresponds to the funnel point to which all flowlines are directed. It has a remarkable property relating to the transverse geometry of the foliation (which is very special of Anosov foliations): 

\begin{prop}\label{prop-nonmarker}
The point $\alpha(L)$ is the unique \emph{non-marker point} of the foliation $\wfws$ in the leaf $L$. More precisely, given a quasi-geodesic ray $r \subset L$ whose endpoint in $S^1(L)$ is $\xi$ we have that: 
\begin{itemize}
\item if $\xi \neq \alpha(L)$ then the holonomy is contracting along $r$, 
\item if $\xi = \alpha(L)$ then the holonomy is expanding along $r$. 
\end{itemize}
\end{prop}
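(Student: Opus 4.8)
The plan is to reduce the statement to one computation --- that the holonomy of $\wfws$ along a segment of a flowline equals, read in suitable transversals, the time-$T$ flow map restricted to a strong unstable arc --- and then to propagate this computation to an arbitrary quasi-geodesic ray using the quasi-geodesic fan structure of Theorem~\ref{teo-insidestructure}. Throughout one may pass to the four-fold cover that orients all bundles, as in \S\ref{s.definitions}, which changes nothing.

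First I would set up the local picture. Fix $x\in L$ and a short arc $\sigma\subset\cF^{uu}(x)$ of the strong unstable leaf through $x$. Since $E^u$ is complementary to $E^s\oplus\RR X$, the arc $\sigma$ is a transversal to $\wfws$, and to decide whether holonomy contracts or expands along a ray any transversal is as good as any other. Now for $y\in\sigma$ one has $\tilde\phi_T(y)\in\wfws(y)$ --- because the weak stable leaf of $y$ contains the whole flowline of $y$, by Theorem~\ref{teo.stableman}(2) --- and $\tilde\phi_T(y)\in\cF^{uu}(\tilde\phi_T(x))$ by flow-invariance of the strong unstable foliation. Hence, for the arcs chosen small enough, the holonomy of $\wfws$ along the flow segment from $x$ to $\tilde\phi_T(x)$, read from the transversal $\sigma$ to the transversal $\cF^{uu}(\tilde\phi_T(x))$, is exactly $\tilde\phi_T|_\sigma$. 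By the Anosov inequalities \eqref{eq:AnosovSmooth} this map expands lengths exponentially when $T>0$ and contracts them exponentially when $T<0$; so the holonomy of $\wfws$ is exponentially \emph{expanding} along any forward flowline ray and exponentially \emph{contracting} along any backward flowline ray. (This is precisely the mechanism already flagged in the footnote to Theorem~\ref{teo.stableman}(5).)

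Next I would pass to a general quasi-geodesic ray $r\subset L$ with endpoint $\xi\in S^1(L)$. Since $L$ is simply connected, the holonomy of $\wfws$ along a path in $L$ depends only on its endpoints, so only $\xi$ matters. If $\xi=\alpha(L)$, then $r$ and the forward flowline ray issued from a point near the basepoint of $r$ are two uniform quasi-geodesic rays with the same ideal endpoint in the uniformly Gromov hyperbolic leaf $L$ (Theorem~\ref{teo-insidestructure}), hence they eventually stay uniformly close; computing the holonomy along the flowline ray and then across a bounded jump onto $r$ alters it only by a uniformly bounded-distortion map, so the exponential rate along $r$ is that of a forward flowline, i.e. expanding. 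If $\xi\neq\alpha(L)$, then the fan structure of Theorem~\ref{teo-insidestructure} provides a (unique) flowline $\lambda$ with ideal endpoints $\xi$ and $\alpha(L)$; since the flow is oriented towards $\alpha(L)$, the subray of $\lambda$ converging to $\xi$ is a \emph{backward} flowline ray, and the same fellow-traveling argument shows the holonomy along $r$ has the exponential rate of a backward flowline, i.e. contracting. By the definition of (non-)marker point this says exactly that $\alpha(L)$ is a non-marker point and every other point of $S^1(L)$ is a marker, which is the claim; the case of $\wfwu$ is identical after reversing time, the funnel point being then the $\alpha$-limit of flowlines.

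The step I expect to need the most care --- though it is more bookkeeping than genuine difficulty, in line with the paper's ``it is not hard'' --- is the fellow-traveling comparison just used: one needs the uniformity of the quasi-geodesic constants of flowlines and of the hyperbolicity constants of the leaves (all packaged in Theorem~\ref{teo-insidestructure}, ultimately Lemma~\ref{lem-gromov} and Theorem~\ref{teo-candel}), the standard fact that quasi-geodesic rays with a common ideal endpoint eventually stay uniformly close, and the remark that a holonomy map along a path of bounded length in $L$ has bounded distortion and hence cannot affect whether the accumulated holonomy contracts or expands. Once these are in hand, everything is a direct translation of the Anosov property and the fan picture.
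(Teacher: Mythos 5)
Your argument is correct and follows essentially the same route as the paper's (brief) proof: strong unstable arcs as transversals, expansion of these arcs under forward flow and contraction under backward flow, and the fan structure to identify every point of $S^1(L)\setminus\{\alpha(L)\}$ with the backward ideal point of a flowline. The fellow-traveling and bounded-distortion bookkeeping you add is exactly the detail the paper leaves implicit, so there is nothing to correct.
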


\begin{proof}
This follows by considering strong unstable arcs as transversals and noticing that along flowlines, these arcs get expanded for the future and contracted for the past. Since in $L$ every point distinct from $\alpha(L)$ is the past of a flowline we can deduce the statement. 
\end{proof}

Later we will come back to marker and non-marker points. 

\subsection{The transverse geometry of $\RR$-covered Anosov flows} 

There is a nice notion of equivalence between foliations which is coarser than being homeomorphic by a homeomorphism homotopic to the identity, but sometimes better adapted to look at large scale properties. We say that two foliations $\cF_1, \cF_2$ of a closed 3-manifold are \emph{uniformly equivalent} if for every leaf $L \in \widetilde{\cF_1}$ there is $L' \in \widetilde{\cF_2}$ such that $L$ and $L'$ are bounded Hausdorff distance apart and symmetrically, for every $F \in \widetilde{\cF_2}$ there is $F' \in \widetilde{\cF_1}$ which is bounded distance apart. Recall that $A$ and $B$ are bounded distance appart if there is $R>0$ such that the $R$-neighborhood of $A$ contains $B$ and the $R$-neighborhood of $B$ contains $A$. 

We can also define the notion of uniform $\RR$-covered foliation from \cite{Thurston}. We say that a foliation $\cF$ is \emph{uniform} $\RR$-covered if its leaf space $\cL = \mt/_{\widetilde{\cF}}$ is Hausdorff and for every pair of leaves $L, L' \in \widetilde{\cF}$ they are bounded Hausdorff distance apart. Note that if one assumes that $\cF$ is Reebless, then, it is not necessary to assume that $\cF$ is $\RR$-covered (cf. \cite{FP-min}), but otherwise it is needed \cite{Lema}. A consequence of being uniform $\RR$-covered is the following (see \cite[Theorem 9.15]{Calegari}): 

\begin{prop}\label{prop.structuremap}
Let $\cF$ be a uniform $\RR$-covered foliation of a closed 3-manifold $M$, then, there exists a map $Z: \cL \to \cL$ where $\cL = \mt/_{\widetilde{\cF}}$ which commutes with the action of $\pi_1(M)$ and has the following property: there is $c>0$ such that $d_{\mt}(L, Z(L)) > c$. 
\end{prop}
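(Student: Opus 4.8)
The plan is to realise $Z$ as a ``push one unit up'' map on the leaf space $\cL$, using as reference scale the Hausdorff distance between leaves of $\widetilde{\cF}$ in $\mt$; this is essentially \cite[Theorem 9.15]{Calegari}, and I sketch the argument. Throughout I assume, as elsewhere in this discussion, that $\cF$ is transversely orientable (pass to a double cover if needed), so that $\cL\cong\RR$ carries a $\pi_1(M)$--invariant orientation and $\pi_1(M)$ acts on it by orientation-preserving homeomorphisms; recall moreover that deck transformations act on $(\mt,g)$ by isometries and hence preserve Hausdorff distances, and that $\cF$, being $\RR$-covered, is Reebless, so leaves of $\widetilde{\cF}$ are properly embedded (closed) planes in $\mt$.

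First I would set up the comparison function. For $L,L'\in\cL$ let $d(L,L')$ denote the Hausdorff distance in $\mt$ between the corresponding leaves; this is finite \emph{precisely because} $\cF$ is uniform $\RR$-covered, and it is symmetric, satisfies $d(L,L'')\le d(L,L')+d(L',L'')$, and is $\pi_1(M)$--invariant. Two properties of $d$ do the work. (i) \emph{Continuity} of $d\colon\cL\times\cL\to[0,\infty)$: this is the point where the uniform hypothesis is used in an essential way, together with compactness of $M$ — the local product structure and cocompactness of the $\pi_1(M)$-action control the Hausdorff distance of nearby leaves — and it is part of the basic theory of uniform $\RR$-covered foliations (\cite{Thurston,Calegari}). (ii) \emph{Properness in each variable}: fixing $p_0\in L$, the inequality $d(L,L')\le R$ forces some point of $L'$ to lie in the closed ball $\bar B_R(p_0)\subset\mt$ (since $L'$ is closed and $p_0\in L\subset N_R(L')$); thus $L'$ is the leaf through some point of $\bar B_R(p_0)$, and since the quotient map $\mt\to\cL$ is continuous the set of such leaves is a compact subset of $\cL\cong\RR$. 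Hence $\{L':d(L,L')\le R\}$ is bounded for every $R$, i.e. $d(L,L')\to\infty$ as $L'$ leaves every compact subset of $\cL$.

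Next I would define the map. Using the orientation on $\cL$, set
$$Z(L)\ :=\ \inf\bigl\{\,L'\in\cL\ :\ L'>L,\ d(L,L')\ge 1\,\bigr\}.$$
By properness the set on the right is non-empty (it contains all sufficiently large leaves above $L$) and bounded below by $L$, so $Z(L)$ is a well-defined point of $\cL$. Continuity of $d(L,\cdot)$ at the diagonal gives $d(L,L')<1$ for $L'$ close to $L$, whence $Z(L)>L$ strictly; continuity again forces $d(L,Z(L))=1$, since $d(L,\cdot)<1$ on the interval $(L,Z(L))$ (which misses the defining set) while $d(L,Z(L))\ge1$ as $Z(L)$ is a limit of leaves in the defining set. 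Equivariance is then immediate: for $\gamma\in\pi_1(M)$, the map $\gamma$ is an orientation-preserving bijection of $\cL$ with $d(\gamma\cdot,\gamma\cdot)=d(\cdot,\cdot)$, so it carries the defining set of $Z(L)$ onto that of $Z(\gamma L)$ and therefore $\gamma\,Z(L)=Z(\gamma L)$. Taking $c=\tfrac12$, we get $d_{\mt}(L,Z(L))=1>c$ for every $L$, which is the assertion. (One also gets for free that $Z$ is continuous and injective, but this is not needed.)

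The step I expect to be the genuine obstacle — or rather, the one that must be imported from the literature rather than checked in a line — is the continuity of the Hausdorff-distance function $d$ on $\cL\times\cL$: this is exactly where ``uniform'' cannot be relaxed to merely ``$\RR$-covered'' (compare \cite{Lema}), and it relies on the local product structure of a foliation of a compact manifold together with cocompactness of the deck action. The only other subtlety is the coorientation assumption: without it $\pi_1(M)$ contains elements reversing the orientation of $\cL$, which would only interchange the ``push up'' and ``push down'' versions of $Z$; this is harmless in the intended applications (where transverse orientability is assumed) and is the reason the construction is phrased in the cooriented case.
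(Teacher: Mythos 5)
Your construction is essentially the push--up map of \cite[Chapter 9]{Calegari} (which is all the paper itself invokes for this statement), and steps (i)--(ii) and the definition of $Z$ are fine as far as they go; but there is a genuine gap at the final step. You establish that the \emph{Hausdorff} distance between $L$ and $Z(L)$ equals $1$ and then declare $d_{\mt}(L,Z(L))=1>c$. In the proposition, $d_{\mt}(L,Z(L))$ is the distance in $\mt$ between the two leaves as subsets, i.e.\ $\inf\{d_{\mt}(p,q)\,:\,p\in L,\ q\in Z(L)\}$; this is exactly how the statement is used later, e.g.\ in the proof of Proposition \ref{prop-skewisuniform}, where the fact that for a suspension every pair of lifted weak stable leaves has points arbitrarily close is said to contradict this proposition --- a contradiction that only exists for the infimum distance, not the Hausdorff distance. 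A lower bound on the Hausdorff distance gives no lower bound on the infimum distance: two leaves at Hausdorff distance exactly $1$ could a priori come arbitrarily close to each other somewhere, so as written your $Z$ has not been shown to satisfy the conclusion, and this uniform separation is precisely the content that makes the structure map useful.

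The gap is repairable with ingredients you already have, but the argument must be made explicit. Suppose there were leaves $L_n$ and points $x_n\in L_n$, $y_n\in Z(L_n)$ with $d_{\mt}(x_n,y_n)\to 0$. Choose $\gamma_n\in\pi_1(M)$ carrying $x_n$ into a fixed compact fundamental domain and pass to a subsequence with $\gamma_n x_n\to p$; then $\gamma_n y_n\to p$ as well, so by continuity of the projection $\mt\to\cL$ both $\gamma_n L_n$ and $\gamma_n Z(L_n)=Z(\gamma_n L_n)$ converge in $\cL$ to the leaf $F$ through $p$. Since the Hausdorff distance is $\pi_1(M)$--invariant, $d(\gamma_n L_n, Z(\gamma_n L_n))=1$ for all $n$, which contradicts the joint continuity of $d$ on $\cL\times\cL$ (the very property you import in step (i)), as the limit would have to be $d(F,F)=0$. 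So the separation does hold for your $Z$, but only via this extra equivariance--plus--cocompactness argument, not from the Hausdorff bound alone. Two smaller remarks: the parenthetical claim that $Z$ is continuous ``for free'' is not clear from your definition (the infimum can jump when $d(L,\cdot)$ dips back below $1$), though it is not needed once the above is in place; and since the paper gives no proof beyond citing \cite[Theorem 9.15]{Calegari}, your write-up is a reasonable reconstruction of that argument once this point is fixed.
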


This map is sometimes called the \emph{structure} map and when the foliation $\cF$ is minimal (as is the case for Anosov foliations) it helps in showing that uniform $\RR$-covered implies that the manifold \emph{slithers} over the circle and the foliation arises as the fibers of this slithering. (We refer the reader to \cite[Chapter 9]{Calegari} for more discussion.) 

We note here that for suspension Anosov flows, the weak stable and weak unstable foliations are not uniform $\RR$-covered. This is another way to distinguish between suspensions and skewed-$\RR$-covered Anosov flows. 

\begin{prop}\label{prop-skewisuniform}
Let $\phi_t: M \to M$ be an $\RR$-covered Anosov flow. Then, we have the following properties: 
\begin{itemize}
\item $\phi_t$ is skewed-$\RR$-covered if and only if $\cF^{ws}$ is uniform $\RR$-covered. 
\item $\phi_t$ is skewed-$\RR$-covered if and only if $\cF^{ws}$ is uniformly equivalent to $\cF^{wu}$, 
\end{itemize}
\end{prop}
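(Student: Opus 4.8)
The plan is to analyze the two equivalences separately, using the models of the bifoliated plane already available from the previous sections: for a suspension $\cO_\phi \cong \RR^2$ with both foliations intersecting every leaf of the other, and for a skewed-$\RR$-covered flow $\cO_\phi \cong \cB = \{(x,y) : x-1 < y < x+1\}$ with $\cG^s,\cG^u$ the vertical and horizontal foliations. In each case I would pass between statements about Hausdorff distance of leaves of $\cF^{ws},\cF^{wu}$ in $\mt$ and statements about the combinatorics of the corresponding leaves $\wfws,\wfwu$ in the orbit space, using that $\mt$ fibers over $\cO_\phi$ with compact (one-dimensional, uniformly flowed) fibers, so that leaves are bounded Hausdorff distance apart in $\mt$ precisely when the corresponding orbit-space leaves are ``uniformly interleaved'' along the transverse direction.

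For the first bullet: if $\phi_t$ is skewed-$\RR$-covered, I would produce the one-step up map $\eta:\cB\to\cB$ of Section~\ref{ss.onestepup}, which commutes with $A_\phi$ and shifts each vertical leaf of $\cG^s$ a bounded amount to another; descending to $M$ this gives that any two leaves of $\wfws$ in $\mt$ are within a bounded Hausdorff distance governed by the compact fiber and the finitely-many ``steps'' between them along $\cL^s\cong\RR$ — more precisely the skew structure forces every leaf of $\cG^u$ to meet a uniformly bounded window of $\cL^s$, which is exactly what makes neighboring stable leaves stay close. Conversely, if $\cF^{ws}$ is uniform $\RR$-covered, I would invoke Proposition~\ref{prop.structuremap} to get a structure map $Z:\cL^s\to\cL^s$ commuting with $\pi_1(M)$ and moving every leaf a definite amount; this $Z$ is incompatible with the ``product'' picture of a suspension (where $\pi_1(M)$ acts on $\cL^s\cong\RR$ with a $\ZZ$ quotient and one can push leaves arbitrarily far without a uniform bound — equivalently the remark preceding the proposition, that suspension weak foliations are \emph{not} uniform $\RR$-covered, can be cited directly). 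Since by Theorem~\ref{teo.rcoveredboth} an $\RR$-covered flow is either a suspension or skewed-$\RR$-covered, ruling out the suspension case gives skewed.

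For the second bullet: if $\phi_t$ is skewed-$\RR$-covered, I would use the identification with $\cB$ and the map $\eta_1$ (which swaps $\cG^s$ and $\cG^u$ and is a bounded-displacement self-map of $\cB$ commuting with $A_\phi$): $\eta_1$ descends to a homeomorphism of $M$ taking $\cF^{ws}$ to $\cF^{wu}$ and moving every point a bounded $\mt$-distance, hence exhibiting each weak stable leaf as bounded Hausdorff distance from a weak unstable one and vice versa — that is exactly uniform equivalence. Conversely, if $\cF^{ws}$ is uniformly equivalent to $\cF^{wu}$, then $\cF^{wu}$ inherits a Hausdorff leaf space (uniform equivalence transports the $\RR$-covered property) and, combining with the first bullet applied to both foliations, uniform equivalence of the two forces $\cF^{ws}$ to be uniform $\RR$-covered, hence skewed; alternatively, in the suspension case the two foliations are \emph{not} bounded Hausdorff distance apart because the flowlines (along which they intersect) are not quasi-geodesic in the suspension — I would phrase it via Theorem~\ref{teo-insidestructure} and the non-marker point, noting that in the skewed case the funnel points of $\wfws$ and $\wfwu$ leaves line up along common ideal points of a shared compactification, whereas for suspensions the transverse behavior is genuinely two-sided and bounded interleaving fails.

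\textbf{Main obstacle.} The delicate point is the careful bookkeeping of ``bounded Hausdorff distance in $\mt$'' versus ``bounded combinatorial separation in $\cO_\phi$'': one must check that the compactness of the flow-fibers together with the minimality of the $\pi_1(M)$-action (Proposition~\ref{prop-rcovtransitive}) really upgrades the local skew picture to a \emph{uniform} global bound, and symmetrically that the presence of a structure map genuinely obstructs the suspension product structure rather than merely the non-uniform version. I expect the converse directions (deducing skewness from a uniformity hypothesis) to be the harder half, since there one cannot simply exhibit $\eta$ or $\eta_1$ but must instead rule out the suspension alternative, which requires knowing precisely how uniform $\RR$-covered-ness fails for product Anosov flows.
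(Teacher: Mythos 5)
Your outline follows the same skeleton as the paper (dichotomy suspension/skewed, one-step-up map for the skewed direction, ruling out the suspension for the converses), but the load-bearing steps rest on claims that are either false or unproved. The foundational one is your bridging principle: $\mt$ does \emph{not} fiber over $\cO_\phi$ with compact fibers --- the fibers are the lifted flow lines, which are copies of $\RR$ --- so "bounded Hausdorff distance in $\mt$ iff the orbit-space leaves are uniformly interleaved'' is unjustified, and the suspension itself shows that no purely combinatorial reading of $\cO_\phi$ can work: there the orbit space is a product plane in which any two vertical leaves are as "interleaved'' as one likes, yet $\cF^{ws}$ is not uniform $\RR$-covered. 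This is precisely the content you must prove, and your treatment of the suspension case is essentially circular: the "remark preceding the proposition'' that you propose to cite is exactly what this proposition's proof is meant to substantiate. The paper's argument there is concrete: in the suspension any two leaves of $\wfws$ contain points arbitrarily close (the strong unstable of any point crosses every weak stable leaf in $\mt$, and flowing backwards contracts the unstable segment), which is incompatible with a structure map satisfying $d_{\mt}(L,Z(L))>c$ as in Proposition~\ref{prop.structuremap}; alternatively one computes directly that two leaves contain points arbitrarily far apart. Your "one can push leaves arbitrarily far without a uniform bound'' gestures at this but supplies no argument.

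Two further points. For "suspension $\Rightarrow$ not uniformly equivalent'' your stated reason --- that flowlines are not quasi-geodesic in the suspension --- is false: flowlines are uniform quasi-geodesics in their leaves for \emph{every} Anosov flow (Theorem~\ref{teo-insidestructure}), and for the suspension they are even geodesics of the Sol-like metric on $\mt$; what is actually needed is a metric estimate (e.g.\ points far along the strong stable direction at a fixed time are far from any fixed weak unstable leaf), which you do not give. For "skewed $\Rightarrow$ uniformly equivalent'', your idea of realizing $\eta_1$ by a homeomorphism of $M$ is a legitimate alternative, but it does not "descend'': one must invoke Theorem~\ref{teo-bifolclas} (applied to $\phi_t$ and its time-reversal) to realize the $\pi_1(M)$-equivariant swap of $\cG^s$ and $\cG^u$ by a homeomorphism of $M$ homotopic to the identity, and then use that lifts of such homeomorphisms move points a bounded distance; as written this is asserted, not derived. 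The paper's route is lighter: in the skewed picture every leaf of $\wfwu$ lies in the region between two leaves of $\wfws$, and uniformity of $\cF^{ws}$ (first bullet) immediately bounds its Hausdorff distance to a stable leaf. Finally, your claim that "uniform equivalence of the two forces $\cF^{ws}$ to be uniform $\RR$-covered'' is a non sequitur: uniform equivalence compares leaves of the two different foliations and does not by itself bound the distance between two leaves of $\cF^{ws}$, so this cannot substitute for excluding the suspension directly.
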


\begin{proof}
As mentioned, the weak stable foliation of a suspension cannot be uniform $\RR$-covered. One way to see this is to actually compute the distance between pair of leaves and see that there are points arbitrarily far apart in any pair of leaves. Alternatively, one can argue conversely, noting that every pair of leaves has points arbitrarily close (this is because the strong unstable manifold intersects every weak stable manifold in $\mt$, so flowing backwards one finds points arbitrarily close) which contradicts Proposition \ref{prop.structuremap}. Moreover, one can easily check that the one-step up map (cf. \S \ref{ss.onestepup}) works as a structure map for $\cF^{ws}$ which easily implies that the foliation has to be uniform-$\RR$-covered. 

It is also easy to see that the weak stable and weak unstable foliations of a suspension cannot be uniformly equivalent. On the other hand, if $\phi_t$ is skewed-$\RR$-covered, then any leaf of $\widetilde{\cF^{wu}}$ is contained in the region between two leaves of $\widetilde{\cF^{ws}}$, since the latter is uniform-$\RR$-covered, this shows that for every leaf of $\widetilde{\cF^{wu}}$ is bounded Hausdorff distance apart from some (and therefore all) leaf of $\widetilde{\cF^{ws}}$. The argument is symmetric, so this completes the proof. 
\end{proof}

Note that one can ask if similar results hold for non-$\RR$-covered foliations. One expects in general that the foliations will not be uniformly equivalent, but this may not be always the case (compare with Question \ref{quest-fol}): 

\begin{quest}
Are there non-$\RR$-covered Anosov flows for which $\cF^{ws}$ is uniformly equivalent to $\cF^{wu}$?
\end{quest}

I see this question as quite related to a question that was discussed quite a bit during the conference: 

\begin{quest}\label{quest-flip}
Are there non-$\RR$-covered Anosov vector fields $X$ orbit equivalent to the Anosov flow generated by $-X$? (Here of course we ask for orbit equivalence preserving orientation of the orbits.) 
\end{quest} 

This seems to be quite related to the recent results on periodic spectra, and simple periodic orbits (i.e. periodic orbits having a unique representative in their free homotopy class) from recent work \cite{BFrM,BBM}. 

\section{Universal circles}\label{s.universal}

Here we discuss two ways of producing circle actions from Anosov flows. One, corresponds to compactification of the bifoliated plane introduced by Fenley in \cite{Fenley-circle} (see also \cite{Bonatti,BBM}). The other consists in 'gluing together' the circles at infinity of individual leaves which is made in \cite{CD} (see also \cite{Thurston}). One has to do with the transverse geometry of an Anosov flow, while the other is more related to its tangencial geometry. There are some connections that we will try to explore, but also some things that are not yet completely understood to the best of my knowledge. 

\subsection{Compactifying the bi-foliated plane} 

Here we briefly explain some ideas on how to compactify the bi-foliated plane with a circle at infinity, on which the action of the fundamental group extends. This was done in \cite{Fenley-circle} and recently revisited in \cite{Bonatti,BBM}. 

We shall start by explaining the properties that such a compactification has: 

\begin{teo}[\cite{Fenley-circle,Bonatti}]\label{teo-circleFB}
Given an Anosov flow $\phi_t: M \to M$ there is a compactification of the bifoliated plane $(\cO_\phi, \cG^s, \cG^u)$ with a circle $S^1_{bif}=S^1_{bif}(\phi)$ with the following properties: 
\begin{itemize}
\item $D_\phi=\cO_\phi \cup S^1_{bif}$ is homeomorphic to $\DD$ with boundary $\partial \DD \cong S^1_{bif}$,
\item every leaf $\ell \in \cG^s$ (resp. $\ell \in \cG^u$) verifies that the closure $\overline{\ell}$ in $D_\phi$ is a closed interval whose endpoints are distinct points of $S^1_{bif}$, 
\item the action of $\pi_1(M)$ extends continuously to an action by homeomorphisms of $D_\phi$, 
\item the action in $S^1_{bif}$ is minimal if and only if $\phi$ is non-$\RR$-covered, 
\item the flow is skewed-$\RR$-covered if and only if the action has exactly two global fixed points $\xi^+, \xi^-$ and acts minimally in each connected component of the complement of $S^1_{bif} \setminus \{\xi^+, \xi^-\}$, 
\item under orientability assumptions, the flow is a suspension if and only if there are four global fixed points in $S^1_{bif}$. 
\end{itemize}
\end{teo}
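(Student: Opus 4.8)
The plan is to construct $S^1_{bif}$ directly from the order-theoretic/combinatorial structure of the bifoliated plane $(\cO_\phi,\cG^s,\cG^u)$, following Fenley \cite{Fenley-circle} and Bonatti \cite{Bonatti}, and then to read off the last three clauses from the known structure of the $\pi_1(M)$-action on the leaf spaces. After passing to a finite cover (which changes none of the conclusions) I would assume $\cG^s$ and $\cG^u$ transversely oriented, so that every leaf is oriented and has a well-defined pair of ends. Call a properly embedded ray $r\colon[0,\infty)\to\cO_\phi$ \emph{efficient} if, outside a compact set, it meets every leaf of $\cG^s$ and every leaf of $\cG^u$ at most once; efficient rays are abundant (e.g.\ one can push a ray monotonically through the leaves inside a single quadrant $Q_x$ of a point, cf.\ the definition of lozenge). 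Declare two efficient rays equivalent when for every leaf $\ell\in\cG^s\cup\cG^u$ the two rays are eventually on the same side of $\ell$, and set $S^1_{bif}$ to be the set of equivalence classes. First I would put a cyclic order on $S^1_{bif}$: given three classes, take representatives and a large closed topological disk $K\subset\cO_\phi$; since leaves separate $\cO_\phi$ and the rays are efficient, the cyclic order of the three exit points on $\partial K$ stabilizes as $K$ exhausts $\cO_\phi$, and this is the order. One then checks $S^1_{bif}$ has no adjacent pair and carries a countable order-dense subset, hence is order-isomorphic to the oriented circle; finally one topologizes $D_\phi=\cO_\phi\cup S^1_{bif}$ by declaring the basic neighbourhoods of an ideal point $\xi$ to consist of an arc of $S^1_{bif}$ about $\xi$ together with the component of $\cO_\phi\setminus(\ell_1\cup\ell_2)$ whose ends lie in that arc, for suitable leaves $\ell_1,\ell_2$ having their $\xi$-side ends near $\xi$. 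Verifying that $D_\phi$ is then a second countable, compact, simply connected surface with boundary $S^1_{bif}$ gives $D_\phi\cong\DD$, the first clause.

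For the next two clauses I would argue as follows. Given $\ell\in\cG^s$, its two ends determine ideal points $\xi^+(\ell)\ne\xi^-(\ell)$ --- distinct because any $\ell'\in\cG^u$ meeting $\ell$ (there is one) separates $\cO_\phi$ into two half-planes, one containing each end of $\ell$ --- and by construction of the topology $\overline\ell=\ell\cup\{\xi^+(\ell),\xi^-(\ell)\}$ is a closed arc; the same holds for $\cG^u$. Since the $\pi_1(M)$-action preserves $\cG^s$ and $\cG^u$ (Theorem~\ref{teo-bifoliated}), it carries efficient rays to efficient rays and respects both the equivalence relation and the cyclic order, hence acts on $S^1_{bif}$ by orientation-preserving homeomorphisms; because every $\gamma$ sends a basic neighbourhood of $\xi$ to one of $\gamma\xi$ (it maps the defining pair of leaves to another such pair), the action extends continuously to $D_\phi$.

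The last three clauses come from the trichotomy $\RR$-covered suspension / skewed-$\RR$-covered / non-$\RR$-covered. If $\phi$ is $\RR$-covered, then by Theorem~\ref{teo.rcoveredboth} both $\cL^s$ and $\cL^u$ are homeomorphic to $\RR$; each of the two ends of, say, $\cL^s$ produces a $\pi_1(M)$-fixed point in $S^1_{bif}$, so the action is not minimal. Conversely, if $\phi$ is non-$\RR$-covered I would use Theorem~\ref{teobranching}: every branching leaf, with all leaves non-separated from it, is fixed by a nontrivial deck transformation $\gamma$, which acts on $S^1_{bif}$ with finitely many fixed points alternating attracting/repelling (a ``generalized north--south'' map), since $\gamma$ contracts/expands the relevant quadrants and perfect fits (cf.\ Proposition~\ref{prop-propbifol} and Theorem~\ref{teo-adjacentloz}); as periodic leaves are dense in $\cL^s$ and $\cL^u$, the ideal endpoints of such leaves --- hence a dense set of attracting fixed points --- are dense in $S^1_{bif}$, and a standard argument then shows the action is minimal. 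For the skewed case I would use the band model $\cO_\phi\cong\cB$ with $\cG^u$ the horizontals and $\cG^s$ the verticals (cf.\ \S~\ref{ss.onestepup}): the construction above identifies $S^1_{bif}$ with the two ``long sides'' $\{y=x\pm1\}$ of $\cB$ plus the two ends of the band at $x\to\pm\infty$; every deck transformation preserves both foliations, hence each long side and each end, so the two ends $\xi^\pm$ are global fixed points and the action on each long side is conjugate to the (minimal, by transitivity via Proposition~\ref{prop-rcovtransitive}) action on $\cL^s$. Conversely, exactly two global fixed points already preclude minimality, hence $\phi$ is $\RR$-covered by the first clause, and it is not a suspension (that case has four fixed points), so it is skewed. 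Finally, for a suspension $\cO_\phi\cong\RR^2$ with the product bifoliation, $S^1_{bif}$ is the ``square at infinity'' --- four sides carrying the two one-parameter families of ideal leaf-endpoints and four corners --- and the deck group, preserving each foliation and its coorientation, fixes exactly the four corners; conversely four global fixed points exclude the skewed and non-$\RR$-covered cases, so $\phi$ is $\RR$-covered but not skewed, hence a suspension by \cite[Theorem~2.7]{Barbot-leaf}.

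I expect the main obstacle to be the point-set core of the construction: proving that the cyclically ordered set $S^1_{bif}$ is genuinely a topological circle, that $D_\phi$ is a closed disk, and --- most delicate --- that the $\pi_1(M)$-action extends \emph{continuously} to $S^1_{bif}$; this is where compactness of $M$ must be used to obtain uniform control of foliated charts and of how leaves escape to infinity. Within the last clauses, the non-$\RR$-covered minimality statement is the subtler point, requiring the density of the ideal endpoints of periodic (branching) leaves together with the north--south-type dynamics of the associated deck transformations on $S^1_{bif}$.
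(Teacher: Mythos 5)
Your construction takes a different (and legitimate) route from the one sketched in the paper: the paper builds $S^1_{bif}$ in prime-end style, from nested sequences of cross cuts made of arcs of $\cG^s$ and $\cG^u$, whereas you build it from equivalence classes of ``efficient'' rays with a cyclic order read off from exit points along an exhaustion, closer in spirit to the ends-of-leaves construction of \cite{Bonatti}. Both routes defer the same point-set work (completeness of the cyclically ordered set, the disk topology on $D_\phi$, continuity of the extended $\pi_1(M)$-action), which you correctly single out as the technical core and which the paper also omits. Your treatment of the skewed and suspension clauses via the band and the square at infinity agrees with the paper's explicit models, and the converse implications by elimination are fine in principle.

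There is, however, a genuine gap precisely in the clause the paper itself flags as the subtle one: minimality of the boundary action for non-$\RR$-covered flows. First, your claimed mechanism is partly false: a deck transformation $\gamma$ fixing a branching leaf (Theorem~\ref{teobranching}) need not act on $S^1_{bif}$ with finitely many fixed points. When the family of leaves non-separated from a given leaf is infinite --- which happens for instance for flows admitting transverse tori as in \cite{FW,HandelThurston}, where elements of the associated $\ZZ^2$ fix infinite chains of lozenges (scalloped regions, cf.\ \cite{Fenley-branch}) --- such a $\gamma$ fixes infinitely many ideal points, so the ``generalized north--south'' picture fails. Second, even granting a dense set of attracting fixed points of various elements, minimality does not follow by ``a standard argument'': for an element with several alternating fixed points, the basin of an attracting fixed point is only the union of the two adjacent complementary intervals, so a nonempty closed invariant set could a priori avoid every basin you invoke; one must in addition exclude finite invariant sets and proper closed invariant sets, which is exactly the content of the arguments in \cite{Fenley-circle,Bonatti,BFrM} (delicate enough that, as the paper remarks, minimality holds even for non-transitive non-$\RR$-covered flows, so no transitivity-type shortcut is available). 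As written, your proof of the fourth clause is incomplete, and since your converse directions for the skewed and suspension clauses deduce ``$\RR$-covered'' from non-minimality, they inherit the same gap.
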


\begin{proof}[Sketch of the proof] 
Let us just explain how the compactification is made, but the substantial details on why it works will be omitted. It is reminiscent of the \emph{prime end compactification} of a simply connected domain of the plane. One can define a \emph{cross cut} to be a properly embedded polygonal path made by arcs of $\cG^s$ and $\cG^u$ and notice that these separate the bifoliated plane. This way, one can make an ordering between such cross cuts by defining a sequence of decreasing cross cuts to be a sequence of polygonal paths so that the regions they define are \emph{nested} with each other and add such a sequence as a \emph{point at infinity}. Naturally, these paths and therefore these sequences are invariant under the action of $\pi_1(M)$ and so, after carefully choosing a way to identify (an equivalence relation) between sequences of polygonal paths, one gets a natural compactification, which using some properties of circle orders, can be seen to provide a circle at infinity. 

The compactification is not so hard in the product or skewed-$\RR$-covered case. In the latter, one can see the bifoliated plane as the interior of the band $B = \{ (x,y) \ : \ x-1 < y < x+1 \}$ and the foliations $\cG^s$ and $\cG^u$ to be the horizontal and vertical lines. This way, the circle at infinity is seen to be the union of the lines $y=x-1$, $y=x+1$ (on which $\pi_1(M)$ acts minimally, as they can be identified with $\cL^s$ and $\cL^u$) and two points one at $+\infty$ and one at $-\infty$ which are global fixed points of the action. The non-$\RR$-covered case is more subtle, and it is an interesting remark from \cite{Bonatti} that the minimality of the action does not detect the transitivity of the Anosov flow (but the action does determine the flow, due to \cite{BFrM,BBM}). 
\end{proof}

Recently, the concept of \emph{Anosov like actions} was proposed in \cite{BFrM}, it subsumes several of the important properties that an Anosov flow induces in the bifoliated plane, and for that reason, we include it here as a Theorem (and we refer the reader to \cite[Theorem 2.5]{BFrM} for a proof, part was somewhat explained in Proposition \ref{prop-propbifol} and \S \ref{s.nonrcov} above). We skip one of the axioms (called (A4)) because it concerns only pseudo-Anosov flows. 

\begin{teo}\label{teo.anosovlike}
Let $\phi_t: M \to M$ be a transitive Anosov flow. Then, the action of $\pi_1(M)$ on its bifoliated plane verifies the following properties: 
\begin{itemize}
\item[(A1)] If a non trivial element $\gamma \in \pi_1(M)$  fixes a leaf $\ell \in \cG^s$ or $\cG^u$, then it has a fixed point $x \in \ell$,and is topologically expanding on one leaf through $x$ and topologically contracting on the other.
\item[(A2)] The action of $\pi_1(M)$ has a dense orbit. 
\item[(A3)] The set of points $x \in \cO_\phi$ which are fixed by some $\gamma \in \pi_1(M) \setminus \{\mathrm{id}\}$ is dense. 
\item[(A5)] If two leaves $\ell_1, \ell_2$ are non-separated in the leaf space of $\cG^s$ or $\cG^u$, then, there is some non-trivial deck transformation which fixes both. 
\item[(A6)] There are no totally ideal quadrilaterals in $\cO_\phi$ (see \cite[Definition 2.11]{BFrM}). 
\end{itemize}
\end{teo}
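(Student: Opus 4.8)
The plan is to obtain each axiom by transporting to $\cO_\phi$ a statement already in hand, (A6) being the only genuinely new ingredient. Axiom (A1) is exactly the first bullet of Proposition~\ref{prop-propbifol} for leaves of $\cG^s$, and the statement for leaves of $\cG^u$ follows by the symmetric argument (equivalently, by applying the $\cG^s$ statement to the flow generated by $-X$, which exchanges the two foliations). In more detail: if $\gamma\ne\mathrm{id}$ fixes $\ell\in\cG^s$, then $\gamma$ preserves the corresponding leaf $L\in\wfws$, so the image of $L$ in $M$ is a leaf of $\cF^{ws}$ with nontrivial fundamental group; by Theorem~\ref{teo.stableman} this leaf is a cylinder carrying a unique periodic orbit, whose lift inside $L$ is the unique $\gamma$-invariant orbit and projects to the fixed point $x\in\ell$; linearising the flow near this periodic orbit shows that the holonomy of one of $\cG^s(x),\cG^u(x)$ is topologically contracting and the other topologically expanding, with the choice dictated by the flow-orientation of the orbit relative to $\gamma$.

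Axioms (A2), (A3) and (A5) are then immediate. For (A2) and (A3) one uses that the quotient $q\colon\mt\to\cO_\phi$ is open and $\pi_1(M)$-equivariant, together with the transitivity of $\phi_t$ for (A2) and the density of periodic orbits (the Anosov closing lemma, valid for transitive Anosov flows) for (A3): fixing a lift $\tilde o_0\subset\mt$ of a dense orbit $o\subset M$, for every open $V\subset\cO_\phi$ the set $q^{-1}(V)$ is open and flow-saturated, hence projects to an open flow-saturated subset of $M$ meeting $o$, so some $\pi_1(M)$-translate of $\tilde o_0$ lies in $q^{-1}(V)$; thus $q(\tilde o_0)$ has dense $\pi_1(M)$-orbit, which is (A2). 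Running the same argument with the union of all periodic orbits in place of $o$, and using that a lift of a periodic orbit is a flowline fixed setwise by the deck transformation representing its free homotopy class, gives density of the set of points of $\cO_\phi$ fixed by a nontrivial deck transformation, which is (A3). Axiom (A5) is Theorem~\ref{teobranching} read in the orbit space: two non-separated leaves of $\cG^s$ are the projections of two non-separated leaves of $\wfws$, the deck transformation produced there fixes both, and the $\cG^u$ case is symmetric.

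The main obstacle is (A6), the absence of totally ideal quadrilaterals---regions of $\cO_\phi$ bounded by two half-leaves of $\cG^s$ and two of $\cG^u$ in which each consecutive pair of sides forms a perfect fit while no two sides intersect. The strategy I would follow exploits the rigidity of perfect fits together with (A3): by density of periodic points one places a periodic point $p$ (with deck transformation $\gamma$) in the interior, so that $\cG^s(p)$ crosses the two unstable sides and $\cG^u(p)$ the two stable sides, and each of the four perfect-fit corners is analysed as in \S\ref{s.nonrcov} and Proposition~\ref{prop-chainofloz}, being realised by a lozenge with periodic corners. The four lozenges seated at the corners of the quadrilateral would then assemble into a closed loop of consecutive lozenges, contradicting that a chain of lozenges is totally ordered along any leaf traversing it and so cannot close up. An alternative route is a limiting argument---pushing the quadrilateral by powers of $\gamma$ and by suitably chosen escaping sequences of deck transformations, then passing to a limit---which produces a leaf making perfect fits with infinitely many $\gamma$-invariant leaves of the transverse foliation, contradicting that $\gamma$ acts as a contraction or expansion on that leaf (the mechanism already used in the sketch of Theorem~\ref{teobranching}). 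Making either argument precise, in particular controlling the mutual position of the four perfect fits, is the delicate point; here I would follow \cite[Theorem 2.5]{BFrM} and Fenley's ideal-boundary arguments.
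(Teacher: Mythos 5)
Your handling of (A1)--(A3) and (A5) is fine and matches what the paper itself points to: the paper gives no proof of this theorem at all, stating only that it follows from \cite[Theorem 2.5]{BFrM} and that parts were already explained in Proposition \ref{prop-propbifol} (which covers (A1) and (A3)) and in \S\ref{s.nonrcov} (Theorem \ref{teobranching} covers (A5)); your derivations of (A2) and (A3) from transitivity, the closing lemma and the openness/equivariance of the projection to $\cO_\phi$ are correct.

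The genuine gap is (A6), which is exactly the axiom not covered by the paper's earlier results. Your first strategy rests on the claim that each ideal corner of a totally ideal quadrilateral is ``realised by a lozenge with periodic corners,'' but a perfect fit by itself produces neither a lozenge nor a non-trivial deck transformation fixing the leaves involved: in the paper, invariance of a perfect fit comes from non-separated leaves (Theorem \ref{teobranching}) or from corners already known to be fixed by a common $\gamma$ (Proposition \ref{prop-chainofloz}), and the sides of a totally ideal quadrilateral need satisfy neither hypothesis. The ``no closed loop of consecutive lozenges'' fact you invoke is also not established anywhere in the paper. The standard argument, and essentially the one behind \cite[Theorem 2.5]{BFrM}, is different: the interior of a totally ideal quadrilateral is a product region (every stable leaf entering it crosses both unstable sides and meets every unstable leaf entering it, precisely because the four corners are perfect fits), and by the Barbot--Fenley product-region theorem (cf.\ the use of \cite[Theorem 2.7]{Barbot-leaf} after Theorem \ref{teo.rcoveredboth}) the existence of a product region forces the flow to be orbit equivalent to a suspension; but the orbit space of a suspension is a trivial product with no perfect fits whatsoever, a contradiction. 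Since your second, limiting-argument sketch is left vague and you explicitly defer the delicate point to the reference, the proposal as written does not prove (A6); it is on par with the paper's citation but not a self-contained argument, and the lozenge-loop mechanism it suggests would need to be replaced by the product-region route (or fully justified) to close the gap.
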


\subsection{Marker points and universal circles} 

When $\cF$ is a taut foliation in a closed 3-manifold on which Theorem \ref{teo-candel} applies, one can associate to each leaf $L \in \widetilde{\cF}$ a circle $S^1(L)$ which corresponds to the Gromov boundary of $L$ with its Candel metric (or the metric induced by the lift of the initial metric in $M$ to $\widetilde M$, since it will be quasi-isometric to $\HH^2$ due to Candel's theorem). 

Based on some ideas from \cite{Thurston, Thurston2}, in \cite{CD} a way to \emph{glue} together all the circles $S^1(L)$ for $L \in \widetilde{\cF}$ is proposed, and this produces again a universal circle $S^1_{univ}$ on which the fundamental group of $M$ acts naturally.  

Let us explain briefly this construction restricted to the case of Anosov foliations, where this works particularly well. 

Let us define for a general foliation $\cF$ by Gromov hyperbolic leaves, what is a marker direction. The definition is not exactly the same as in \cite{CD} (see also \cite{Calegari}) but coincides for Anosov foliations (these points we will define could be called \emph{two sided markers}). 

\begin{defi}\label{defi-marker}
Given a foliation $\cF$ by Gromov hyperbolic leaves in a closed 3-manifold $M$, and a leaf $L \in \widetilde{\cF}$ we say that a point $\xi \in S^1(L)$ is a \emph{marker point} if for every $\eps>0$ there is some quasi-geodesic ray $r \subset L$ which converges to $\xi$ so that  there is an embedding $m: [0,+\infty) \times [-1,1] \to \widetilde{M}$ so that:
\begin{itemize}
\item $m(t,0)$ parametrizes $r$ by arclength, 
\item $m(\{t\} \times [1,1])$ is an arc transverse to $\widetilde{\cF}$ of length less than or equal to $\eps$, 
\item for every $s \in [1,1]$ there is a leaf $L_s \in \widetilde{\cF}$ so that for every $t \in [0,+\infty)$ we have that $m(t,s) \in L_s$. 
\end{itemize}
\end{defi} 

We can now reformulate Proposition \ref{prop-nonmarker}: 

\begin{prop}\label{prop-marker}
Let $\phi_t: M \to M$ be an Anosov flow and for $L \in \wfws$ we define $\alpha(L) \in S^1(L)$ to be the point so that every flowline in $L$ converges to $\alpha(L)$ in the future (cf. Theorem \ref{teo-insidestructure}). Then, every $\xi \in S^1(L) \setminus \alpha(L)$ is a marker point, and $\alpha(L)$ is not a marker point. Moreover, if $\xi \in  S^1(L) \setminus \alpha(L)$ we can choose the function $m: [0,+\infty) \times [-1,1] \to \widetilde{M}$ as in Definition \ref{defi-marker} so that the length of $m(\{t\}\times [-1,1])$ goes to $0$ as $t \to +\infty$. 
\end{prop}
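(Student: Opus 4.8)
The plan is to deduce everything from Proposition \ref{prop-nonmarker} together with the quasi-geodesic fan structure of Theorem \ref{teo-insidestructure}; the only real work is translating the words ``contracting/expanding holonomy along a ray'' into the combinatorial condition of Definition \ref{defi-marker}. Throughout I will use that $L$ is a plane, so holonomy of $\wfws$ along a path inside $L$ depends only on the endpoints of the path, and that for $\eps$ small enough (below the scale of a foliation box for $\wfws$) a thin transversal to $\wfws$ at a point of $L$ can be slid along any path in $L$ by holonomy, staying inside a prescribed small tube in $\mt$.

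First I would treat $\xi \in S^1(L)\setminus\{\alpha(L)\}$. Given $\eps>0$, choose a quasi-geodesic ray $r\subset L$ converging to $\xi$ (e.g.\ a Candel geodesic ray, which is quasi-geodesic by Theorem \ref{teo-insidestructure}), parametrized by arclength with $r(0)=p$, and a transversal $\sigma:[-1,1]\to\mt$ to $\wfws$ with $\sigma(0)=p$ and length $\le\eps$. Let $h_t$ be the holonomy of $\wfws$ along $r$ from a neighbourhood of $p$ to a neighbourhood of $r(t)$, and set $m(t,s):=h_t(\sigma(s))$ and $L_s:=\wfws(\sigma(s))$. By construction $m(t,0)=r(t)$, each $m(\{t\}\times[-1,1])$ is a transversal to $\wfws$, and $m(t,s)\in L_s$ for all $t$. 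By Proposition \ref{prop-nonmarker} the holonomy is contracting along $r$; more precisely, since $r$ lies at bounded Hausdorff distance from the backward flowline ray with endpoint $\xi$ and along that ray strong unstable transversals contract exponentially (this is exactly the content of the proof of Proposition \ref{prop-nonmarker}), the lengths of $m(\{t\}\times[-1,1])$ stay $\le\eps$ and in fact tend to $0$ as $t\to+\infty$. Finally $m$ is an embedding: the $L_s$ are pairwise disjoint and, for $\eps$ small, the holonomy-translates $t\mapsto m(t,s)$ of $r$ into the nearby leaves $L_s$ are uniform quasi-geodesic rays, hence proper and injective. This gives that $\xi$ is a marker point together with the ``moreover'' statement.

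For the last assertion, I would argue by contradiction: suppose $\alpha(L)$ is a marker point and pick $\eps$ below the foliation-box scale of $\wfws$. Then there are a quasi-geodesic ray $r$ converging to $\alpha(L)$ and an embedding $m$ as in Definition \ref{defi-marker}. By Theorem \ref{teo-insidestructure} flowlines in $L$ are uniform quasi-geodesics oriented towards $\alpha(L)$, so $r$ is bounded Hausdorff distance from a forward flowline ray; since holonomy in the simply connected leaf $L$ is path-independent, the holonomy of $\wfws$ along $r$ agrees, up to bounded error, with that along the forward flowline ray, which stretches any transversal without bound (the weak unstable direction is uniformly expanded by the flow — equivalently, Proposition \ref{prop-nonmarker} says the holonomy along $r$ is expanding). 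On the other hand, because $\eps$ is below the foliation-box scale, the arc $m(\{t\}\times[-1,1])$ is exactly the holonomy image along $r$ of $m(\{0\}\times[-1,1])$, so its length is $\le\eps$ for all $t$ — contradicting the unbounded expansion. Hence $\alpha(L)$ is not a marker point.

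I expect the main obstacle to be the bookkeeping in this last step: rigorously identifying the transverse arcs $m(\{t\}\times[-1,1])$ with holonomy images of the initial arc along $r$, so that their (bounded) lengths are genuinely controlled by the (divergent) holonomy. This needs $\eps$ chosen small relative to the local product structure of $\wfws$ and essential use of simple connectivity of $L$ to make holonomy path-independent; the constructive (marker) direction and the ``moreover'' are then a routine unwinding of Theorem \ref{teo-insidestructure} and Proposition \ref{prop-nonmarker}.
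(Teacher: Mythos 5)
Your proposal is correct and follows essentially the route the paper intends: Proposition \ref{prop-marker} is treated there as a direct reformulation of Proposition \ref{prop-nonmarker}, whose proof is exactly your mechanism (strong unstable transversals contract under backward flow and expand under forward flow, every point of $S^1(L)\setminus\{\alpha(L)\}$ is the backward ideal point of a flowline, and quasi-geodesic rays fellow-travel flowline rays by Gromov hyperbolicity of $L$), with your holonomy bookkeeping just making the translation into Definition \ref{defi-marker} explicit. The only points you gloss over are routine: continuing a definite-size transversal along the whole ray (harmless since it contracts) and arranging $m$ to be an actual embedding (e.g.\ by defining $m(t,s)$ via a continuous family of disjoint small transversal fibers along the embedded ray $r$).
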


We can consider then the tubulation at infinity, which corresponds to a way to add all the circles at infinity of leaves of $\wfws$ in $\widetilde{M}$. 

Let us first define: 

$$ S^1_\infty(\wfws) = \bigsqcup_{L \in \wfws} S^1(L). $$ 

To put a topology on $S^1_{\infty}(\wfws)$ which makes it homeomorphic to an $S^1$-bundle over $\cL^s$ and a topology on $\widetilde{M} \cup S^1_{\infty}(\wfws)$ we will just indicate the main ideas, refering the reader to \cite[\S 7.2]{Calegari} for a way to do it using the Candel metric and to \cite[\S 3]{FP-Hsdff} for a more general construction. 

The idea is that one can consider local transversals $\eta$ to $\wfws$ and give a topology to $\bigsqcup_{L \in \eta} (L \cup S^1(L))$ making it homeomorphic to $\DD \times \eta$. For this, one needs to choose a way to identify points at infinity for nearby leaves, and for this, markers are a good choice. 

Now, one can construct a universal circle by choosing a way to identify all the circles simultaneously. For this, one uses markers, that provide curves in $S^1_\infty(\wfws)$ which are transverse to the circle fibers which we want to identify. These curves, being invariant under deck transformations, produce a quotient on which $\pi_1(M)$ acts, and one can show, again using some properties about circular orders, that this quotient is a circle. This circle is called \emph{the universal circle} of $\cF^{ws}$ and denoted by $S^{1}_{univ}(\cF^{ws})$. 

The action of $\pi_1(M)$ on this universal circle is less understood. For instance, motivated by \cite{BFrM} one can ask: 

\begin{quest}
Assume that $\phi_1, \phi_2$ are two Anosov flows on $M$ with weak stable foliations $\cF^{ws}_1, \cF^{ws}_2$. Assume that the actions of $\pi_1(M)$ on $S^1_{univ}(\cF^{ws}_1)$ and $S^1_{univ}(\cF^{ws}_2)$ are conjugated, does this imply that the flows are orbit equivalent? 
\end{quest}

This is true for skew-$\RR$-covered Anosov flows as we shall see in the next section. But in general it is unclear, in particular, the actual construction of the universal circle for non-$\RR$-covered foliations is not completely cannonical (see \cite{CD}) and so one would need to make this question more precise before attempting to answer it. There are very few general results on these actions \cite{Kano} and less for Anosov foliations, so it seems that it can be a problem worth trying to understand further. In fact, already the action in the tubulation (which seems more canonical) seems interesting to check.  

Note that when the foliation is $\RR$-covered and uniform, the action is minimal \cite{FP-min} (in the next section we will explain this for Anosov foliations), which can be contrasted with Theorem \ref{teo-circleFB}. We shall explore more about the action in the next section, and try to find links between the two actions.

\subsection{$\RR$-covered Anosov flows, one step up maps and regulating flows} 
Here we discuss some results motivated by \cite{Fenley,Thurston} related to skewed-$\RR$-covered Anosov flows and their self-orbit equivalences. Some related arguments can also be found in \cite{Barbot,BG}. 

We first state the main result we will use, it was indicated by Thurston in \cite{Thurston} and given a full proof by Calegari (see \cite[Chapter 9]{Calegari}) and Fenley \cite{Fenley-regulating}. 

\begin{teo}\label{teo.regulating}
Let $\cF$ be a uniform $\RR$-covered foliation. Then, there is a pseudo-Anosov flow $\varphi_t: M \to M$ transverse to $\cF$ with the property that when lifted to $\mt$ every orbit of $\phi_t$ intersects every leaf of $\widetilde{\cF}$. 
\end{teo}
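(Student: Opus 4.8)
The plan is to run the classical argument via \emph{slitherings}: first realize $M$ as slithering over the circle using the structure map, then build a canonical cylinder at infinity over the leaf space, and finally extract the transverse flow together with its invariant foliations from the induced action on a universal circle.

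First I would repackage Proposition~\ref{prop.structuremap}. The structure map $Z\colon\cL\to\cL$ commutes with the $\pi_1(M)$-action and moves every point by at least $c>0$; after a harmless normalization one may take $Z$ to generate a group $\langle Z\rangle\cong\ZZ$ acting on $\cL\cong\RR$ freely, properly discontinuously and cocompactly, while still commuting with all of $\pi_1(M)$. Then $\cL/\langle Z\rangle\cong S^1$ carries a $\pi_1(M)$-action, and composing the leaf-space projection $\mt\to\cL$ with $\cL\to\cL/\langle Z\rangle$ exhibits $M$ as slithering over $S^1$ with $Z$ as monodromy (this is essentially \cite[Chapter~9]{Calegari}). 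Assuming the leaves are hyperbolic surfaces --- the alternative being a foliation by planes, which forces $M$ to be a torus bundle and is degenerate --- Candel's Theorem~\ref{teo-candel} applies (automatically in the Anosov case by the exponential area growth of Lemma~\ref{lem-gromov}), so every leaf of $\widetilde{\cF}$ is quasi-isometric to $\HH^2$ and has a well-defined boundary circle.

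Next I would assemble the cylinder at infinity. Because the leaves of $\widetilde{\cF}$ are pairwise bounded Hausdorff distance apart, the markers of Definition~\ref{defi-marker} give canonical identifications between the boundary circles of nearby leaves, so $S^1_\infty(\widetilde{\cF})$ becomes a trivial bundle $S^1_{univ}\times\cL$ on which $\pi_1(M)$ acts compatibly with the slithering and on which $Z$ induces a homeomorphism $\overline{Z}$ of $S^1_{univ}$ commuting with the action. The transverse regulating flow is then produced by letting orbits move monotonically in the leaf-space coordinate: using uniformity together with compactness of $M$ one can choose a $\pi_1(M)$-invariant transverse vector field whose lifted orbits project \emph{onto} all of $\cL\cong\RR$. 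This surjectivity is precisely the regulating property, and it identifies the orbit space $\cO_\varphi$ with a single leaf, hence with $\RR^2$, with $\widetilde{\cF}$ descending to one foliation $\cG$ of $\cO_\varphi$.

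The genuinely hard step, which I expect to be the main obstacle, is to upgrade this regulating flow to a pseudo-Anosov one: one must equip $\cO_\varphi$ with a \emph{second} $\pi_1(M)$-invariant, possibly singular, foliation transverse to $\cG$, with the correct expansion and contraction along periodic orbits. This cannot be read off formally from the slithering; following Thurston~\cite{Thurston} and Calegari one uses the action of $\pi_1(M)$ on $S^1_{univ}$ together with the structure of marker and non-marker points (Proposition~\ref{prop-marker}) to produce a pair of $\overline{Z}$-invariant laminations of $S^1_{univ}$, which then sweep out, through the cylinder at infinity, two transverse singular foliations on $\cO_\varphi$; the ``leftmost section'' construction is what makes these choices coherent and bounds the number of prongs. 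Verifying the pseudo-Anosov axioms (of the type recorded in Theorem~\ref{teo.anosovlike}) for the resulting flow finishes the argument, and it is this last part that is worked out in detail in \cite{Calegari} and \cite{Fenley-regulating}.
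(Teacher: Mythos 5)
Your final paragraph is essentially the paper's own route (the joint action of $\pi_1(M)$ and the structure map on the universal circle, a pair of invariant laminations, which are then swept into singular foliations, with the details deferred to \cite{Calegari,Fenley-regulating}), so the core is fine as a sketch. The genuine gap is the middle step: you assert that ``using uniformity together with compactness of $M$ one can choose a $\pi_1(M)$-invariant transverse vector field whose lifted orbits project onto all of $\cL\cong\RR$.'' Transversality plus uniformity plus compactness does not give this, and no argument is offered. Concretely, take the weak stable foliation of a skewed-$\RR$-covered Anosov flow, which is uniform by Proposition~\ref{prop-skewisuniform}, and flow along the (oriented) strong unstable foliation, which is positively transverse to it. In the universal cover a strong unstable leaf lies inside a single weak unstable leaf and meets exactly the stable leaves that this weak unstable leaf meets (Theorem~\ref{teo-insidestructure}); in the band model this is the bounded interval $(c-1,c+1)$ of $\cL^s$, so such an orbit never even reaches the image of its own leaf under the one-step-up map of \S\ref{ss.onestepup}. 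Hence a transverse flow on a uniform foliation can badly fail to be regulating, and the existence of a regulating transverse flow is precisely part of the nontrivial conclusion, not a soft preliminary. (Compactness only promotes a pointwise statement ``every orbit crosses $Z(L_x)$'' to a uniform one; it is the pointwise statement that can fail.)

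There is also a misordering tied to the same point: the pseudo-Anosov flow is not obtained by ``upgrading'' a previously chosen regulating flow by decorating its orbit space with a second invariant foliation. In the Thurston--Calegari--Fenley argument one first produces the two laminations of the universal circle invariant under the joint action of $\pi_1(M)$ and the extension of $Z$ (this is where the Candel metric, straightening of quasi-geodesics, and limits of laminations enter), then suspends them to laminations of $M$ transverse to $\cF$, and the flow itself is built afterwards by collapsing the complementary regions; both the transversality and the regulating property are deduced from this structure. Moreover there is a dichotomy your sketch suppresses: a complementary region that is not an ideal polygon yields an incompressible torus, so an atoroidality-type hypothesis is needed for the pseudo-Anosov conclusion (and in the degenerate plane-leaf case the manifold is $T^3$, which admits no pseudo-Anosov flow at all, so the reduction to hyperbolic leaves must be handled by hypothesis rather than dismissed). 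With the order reversed --- laminations first, flow and regulating property afterwards --- and the torus alternative acknowledged, your outline matches the intended proof.
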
 

Recall that a \emph{pseudo-Anosov flow} $\varphi_t: M \to M$ is a non-singular flow preserving two transverse singular foliations whose singularties correspond to finitely many periodic orbits of prong type (see figure \ref{f.flujopA}) and whose dynamics is contracting to the future in one of the foliations and repelling on the other. We note that it has been proved that pseudo-Anosov flows are exactly those flows which are expansive (see \cite{IM,Paternain}). From the pseudo-Anosov property, many properties of the action on the universal circle can be deduced (see \cite{FP-phpa} for discussion of some applications).

\begin{figure}[ht]
\begin{center}
\includegraphics[scale=0.85]{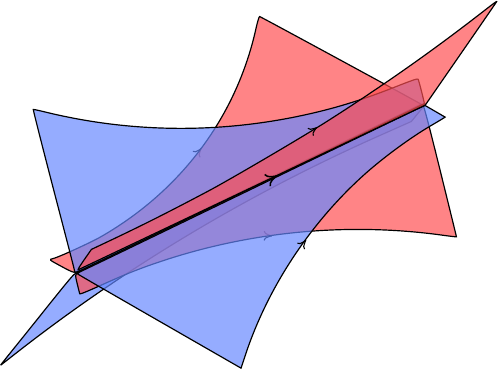}
\begin{picture}(0,0)
\end{picture}
\end{center}
\vspace{-0.5cm}
\caption{{\small Local picture a 3-prong of a pseudo-Anosov flow. (Figure by Elena Gomes.)}\label{f.flujopA}}
\end{figure}

\begin{proof}[Sketch of the proof]
We actually recommend looking at \cite{Thurston} to get a good idea on how this problem is approached. In fact, as it is very well explained in \cite{Calegari}, the proof mimics the ideas in the proof by Thurston that homeomorphisms of surfaces not having any free homotopy class of simple closed curves periodic, must have a pseudo-Anosov representative. Here, these periodic free homotopy classes are replaced by incompressible tori in $M$ (which do not exist if $M$ is hyperbolic). 

The role of iterating with lifts of the surface diffeomorphism is changed by the use of the structure map $Z$ which has a well defined realization as a map that extends to the universal circle acting as the identity, and since $Z$ commutes with deck transformations, one can look at the action of $\pi_1(M)$ to produce two transverse invariant laminations whose complementary regions are ideal polygons. The strategy is very similar to the surface homeomorphism case, one uses Candel's metric on leaves, and if one takes a geodesic there, the image by $Z$ is a quasi-geodesic that one can stretch to a geodesic, and the action of $\pi_1(M)$ maps geodesics to geodesics, so, the joint action of $\pi_1(M)$ and $Z$ on the space of 'laminations' of the universal circle is well defined. One needs to show that taking limits one obtains an invariant lamination (i.e. it does not depend on the subsequences) and that if there is a complementary region which is not an ideal polygon, then one can construct an incompressible torus. Finally, one needs to make some hyperbolic geometry arguments by looking at moduli of quadrilaterals to obtain the more precise expansion/contraction properties to finish the proof. 
\end{proof}

We note that recently Fenley \cite{Fenley-regulating} extended this to all 3-manifolds admitting a uniform $\RR$-covered foliation obtaining results very much analogous as those for the Nielsen-Thurston classification of diffeomorphisms of surfaces.

For an $\RR$-covered Anosov flow $\phi_t: M \to M$, we want to relate the action of $\pi_1(M)$ in $S^1_{univ}(\cF^{ws})$ with the action of $\pi_1(M)$ on $S^1_{bif}(\phi)$. Recall that the action of $\pi_1(M)$ on $S^1_{bif}(\phi)$ is with two global fixed points, and the action in the complement is given by the action of $\pi_1(M)$ on the leaf space $\cL^s$ (which is conjugated to the action on $\cL^u$). 

So, the following result identifies these actions: 

\begin{prop}\label{prop-actionskewR}
Let $\phi_t: M \to M$ be a skewed-$\RR$-covered Anosov flow. Then, the $\pi_1(M)$ action on $S^1_{univ}(\cF^{ws})$ is conjugated to the $\pi_1(M)$ action on the circle $\cL^s/_{Z^s}$ (where $Z^s$ denotes the structure\footnote{The way we defined the structure map, it is not uniquely defined, however, there is a cannonical one, which, in the case of skewed-$\RR$-covered Anosov flows is in fact the one given by the one step up map. This is the one we use.} map of the foliation $\widetilde{\cF^{ws}}$).  
\end{prop}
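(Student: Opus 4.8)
The plan is to give a concrete model for $S^1_{univ}(\cF^{ws})$ coming from the uniform $\RR$-covered structure, and then to write down an explicit $\pi_1(M)$-equivariant homeomorphism between it and $\cL^s/_{Z^s}$.

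\emph{Step 1: a model for $S^1_{univ}(\cF^{ws})$.} By Proposition~\ref{prop-skewisuniform} the foliation $\cF^{ws}$ is uniform $\RR$-covered, so any two leaves of $\wfws$ are bounded Hausdorff distance apart; together with the Gromov hyperbolicity of leaves (Theorem~\ref{teo-insidestructure}) this produces, for leaves $L,L'$, a canonical boundary homeomorphism $\tau_{L,L'}\colon S^1(L)\to S^1(L')$, and these are compatible under composition. Fixing a base leaf $L_0$, the cylinder $S^1_\infty(\wfws)=\bigsqcup_{L}S^1(L)$ becomes, via the $\tau_{L,L_0}$, the trivial bundle $\cL^s\times S^1(L_0)$; by Proposition~\ref{prop-marker} the markers coming from a fixed weak unstable leaf identify marker points consistently with the $\tau$'s, and the unique non-marker point $\alpha(L)$ of each leaf (Proposition~\ref{prop-nonmarker}) is carried by $\tau_{L,L_0}$ to a single point $\alpha\in S^1(L_0)$; it follows (as in \cite[Ch.~9]{Calegari}, \cite{CD}) that $S^1_{univ}(\cF^{ws})$ is naturally identified with $S^1(L_0)$, the action of $\gamma\in\pi_1(M)$ being $\xi\mapsto\tau_{\gamma L_0,L_0}(\gamma_*\xi)$, where $\gamma_*\colon S^1(L_0)\to S^1(\gamma L_0)$ is induced by the deck isometry $\gamma$. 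Along the way we record that the one-step-up map $\eta$ (which realizes the structure map $Z^s$, cf.\ \S\ref{ss.onestepup}) acts as the identity on this universal circle, i.e.\ $\tau_{\eta L,L_0}\circ\eta_*=\tau_{L,L_0}$; this is the fact already used in the proof of Theorem~\ref{teo.regulating}.

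\emph{Step 2: matching with $\cL^s/_{Z^s}$.} By Theorem~\ref{teo-insidestructure}, $S^1(L_0)\setminus\{\alpha\}$ is in bijection with the orbits in $L_0$, i.e.\ with the set $I_{L_0}\subset\cL^u$ of weak unstable leaves meeting $L_0$, which in the skewed (band) model of $\cO_\phi$ is a fundamental domain for the unstable structure map $Z^u$. I would extend this to a map $q\colon\cL^u\to S^1(L_0)$ sending a weak unstable leaf $F$ to $\tau_{L,L_0}(\xi^L_F)$, where $L$ is any stable leaf met by $F$ and $\xi^L_F\in S^1(L)$ is the backward endpoint of the orbit $F\cap L$ (well defined by compatibility of the $\tau$'s), and check: $q$ extends continuously with $q(\pm\infty)=\alpha$; $q$ is $Z^u$-invariant, precisely because the one-step-up map (which realizes $Z^u$) acts as the identity on $S^1_{univ}(\cF^{ws})$; $q$ is injective on the fundamental domain $I_{L_0}$; and $q$ is $\pi_1(M)$-equivariant, from naturality of the $\tau_{L,L'}$ under deck transformations. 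Hence $q$ descends to a $\pi_1(M)$-equivariant homeomorphism $\cL^u/_{Z^u}\xrightarrow{\ \sim\ }S^1_{univ}(\cF^{ws})$. Finally, the half-step-up homeomorphism $\eta_1$ of $\cO_\phi$ (which interchanges $\cG^s$ and $\cG^u$, commutes with $\pi_1(M)$, and satisfies $\eta=\eta_1\circ\eta_1$) induces a $\pi_1(M)$-equivariant homeomorphism $\cL^s\to\cL^u$ conjugating $Z^s$ to $Z^u$; precomposing $q$ with it yields the desired conjugacy $\cL^s/_{Z^s}\xrightarrow{\ \sim\ }S^1_{univ}(\cF^{ws})$.

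\emph{The main obstacle.} The delicate part is Step~1: one must verify that the canonical leaf-circle identifications $\tau_{L,L'}$ genuinely realize the universal circle of \cite{CD} — in particular that the non-marker directions $\alpha(L)$ are handled coherently, which is where the precise quasi-geodesic-fan description of Theorem~\ref{teo-insidestructure} and the characterization of $\alpha(L)$ as the unique non-marker point (Proposition~\ref{prop-nonmarker}) are essential, since the construction in \cite{CD} involves a choice at non-marker points. Granting Step~1, Step~2 is bookkeeping with the fundamental domain $I_{L_0}$ and the identity $\tau_{\eta L,L_0}\circ\eta_*=\tau_{L,L_0}$.
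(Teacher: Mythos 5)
There is a genuine error at the heart of Step 1, and it is exactly the point the proposition is about. You claim that the canonical identifications $\tau_{L,L_0}$ carry the non-marker point $\alpha(L)$ of \emph{every} leaf to a \emph{single} point $\alpha\in S^1(L_0)$. This is false, and if it were true the proposition would fail: the set of funnel points is $\pi_1(M)$-equivariant, so your picture would produce a global fixed point for the $\pi_1(M)$-action on $S^1_{univ}(\cF^{ws})$, whereas the action on $\cL^s/_{Z^s}$ is minimal (the action on $\cL^s$ is minimal by transitivity), and indeed the action on the universal circle of a uniform $\RR$-covered foliation is minimal (\cite{FP-min}, as recalled in the text). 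The geodesic flow makes the mistake concrete: leaves of $\wfws$ are the sets $L_\xi$ of vectors pointing to $\xi\in\partial\HH^2$, each identified with $\HH^2$, the coarse nearest-point identifications $\tau_{L_\xi,L_{\xi'}}$ are the identity of $\partial\HH^2$, and the funnel point of $L_\xi$ is $\xi$ itself; so as $L$ varies, $\tau_{L,L_0}(\alpha(L))$ sweeps out the whole circle rather than staying put. The correct statement is the opposite of yours, and it \emph{is} the proof: $L\mapsto\tau_{L,L_0}(\alpha(L))$ is a monotone covering map $\cL^s\to S^1_{univ}(\cF^{ws})$, equivariant and with deck group generated by $Z^s$ (because the one-step-up map, realized on $\mt$, moves points boundedly and hence acts as the identity at infinity), and descending it gives the conjugacy. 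That is the paper's argument, stated directly in terms of the non-marker map.

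The error propagates into Step 2: the assertion that $q$ extends continuously with $q(\pm\infty)=\alpha$ is false ($q$ wraps around $S^1(L_0)$ infinitely many times as one runs over $\cL^u$ — in the geodesic flow model $q(F)$ is the backward ideal point of $F$, i.e.\ the projection $\RR\to\partial\HH^2$), so the descended map is not obtained the way you describe. The salvageable content of Step 2 (well-definedness of $q$ independent of the stable leaf, $Z^u$-invariance via the identity action of $\eta$ at infinity, injectivity on the interval $I_{L_0}$, and transferring from $\cL^u$ to $\cL^s$ by the half-step map $\eta_1$) in fact reconstructs the paper's non-marker map, since $q(F)=\alpha(L')$ for the stable leaf $L'$ making a perfect fit with $F$; but the continuity and local-injectivity of the descended map — i.e.\ that the non-marker point varies monotonically and continuously with the leaf, which is the real content — is precisely what your false Step 1 was substituting for, and it is missing. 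To repair the proof, drop the "single point $\alpha$" claim, show instead that $L\mapsto \tau_{L,L_0}(\alpha(L))$ is a monotone, continuous, $Z^s$-periodic (up to deck of the covering) surjection, and conclude as in the paper.
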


\begin{proof}[Sketch of the proof]
The non-marker point defined in \S~\ref{ss.nonmarker} allows one to define a map $\alpha: \cL^s \to S^1_{univ}(\cF^{ws})$ mapping each leaf into a point in the universal circle corresponding to the non-marker point of the leaf. This map is a covering map, $\pi_1(M)$ equivariant and commutes with the one-step up map, thus, it defines the desired identification. 
\end{proof}

More discussions on these properties can be found in \cite{Thurston,BFFP,FP-min,Fenley-regulating,FP-phpa}.  In particular, when $M$ is hyperbolic, the fact that the regulating flow is pseudo-Anosov can be used to obtain properties of this action, for example, one can show: 

\begin{prop}
Let $\phi_t: M \to M$ be a skewed-$\RR$-covered Anosov flow in a hyperbolic 3-manifold. Then, there exists $k>0$ such that for every $\gamma \in \pi_1(M)$ the action of $\gamma^k$ in $S^1_{univ}(\cF^{ws})$ has finitely many fixed points, alternatively attracting and repelling. 
\end{prop}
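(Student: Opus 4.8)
The plan is to transport the question to the orbit space of the regulating pseudo\nobreakdash-Anosov flow and read the dynamics off the prong structure there. Since $\phi_t$ is skewed\nobreakdash-$\RR$\nobreakdash-covered, Proposition~\ref{prop-skewisuniform} gives that $\cF^{ws}$ is uniform $\RR$\nobreakdash-covered, so Theorem~\ref{teo.regulating} produces a regulating pseudo\nobreakdash-Anosov flow $\psi_t:M\to M$ transverse to $\cF^{ws}$ whose lift to $\mt$ meets every leaf of $\widetilde{\cF^{ws}}$. As $M$ is hyperbolic it is atoroidal and $\pi_1(M)$ contains no $\ZZ^2$; in particular $\psi_t$ is a genuine pseudo\nobreakdash-Anosov flow with only finitely many singular periodic orbits, say with $p_1,\dots,p_m$ prongs ($p_i\ge 3$). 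By Proposition~\ref{prop-actionskewR} the $\pi_1(M)$\nobreakdash-action on $S^1_{univ}(\cF^{ws})$ is conjugate to the action on $\cL^s/_{Z^s}$, and this circle is identified $\pi_1(M)$\nobreakdash-equivariantly with the ideal boundary $\partial\cO_\psi$ of the orbit space of $\psi_t$ (this is essentially built into the construction of $\psi_t$; cf.\ \cite[Ch.~9]{Calegari} and \cite{Fenley-regulating}). So it suffices to analyse the action of an arbitrary nontrivial $\gamma\in\pi_1(M)$ on $\partial\cO_\psi$.

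I would set $k=\mathrm{lcm}(2,p_1,\dots,p_m)$ (so $k=2$ if $\psi_t$ has no singular orbit) and split into two cases according to whether $\gamma$ fixes a point of $\cO_\psi$. Suppose first that $\gamma x=x$ for some $x\in\cO_\psi$; then the $\psi_t$\nobreakdash-orbit through $x$ is periodic and $x$ is a $p$\nobreakdash-prong saddle with $p\in\{2,p_1,\dots,p_m\}$. The $2p$ half\nobreakdash-leaves of the stable and unstable (singular) leaves of $\psi$ through $x$ alternate around $x$ and limit to $2p$ distinct points of $\partial\cO_\psi$; since $\gamma$ preserves the orientation of $\cO_\psi$ and cannot interchange the stable and unstable foliations (it contracts one and expands the other), it permutes these prongs by a rotation whose order divides $p$ when $x$ is singular and divides $2$ when $x$ is regular, hence divides $k$. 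Inside each of the $2p$ sectors at $x$ the map $\gamma$ is topologically conjugate to a hyperbolic linear map, so it has no further fixed point there, neither in $\cO_\psi$ nor on $\partial\cO_\psi$; this is the step that uses the structure theory of pseudo\nobreakdash-Anosov flows in atoroidal manifolds (finiteness and rigidity of chains of lozenges, cf.\ \cite{Fenley-loz,Fenley-regulating}) to exclude $\gamma$\nobreakdash-invariant configurations that would contribute extra ideal fixed points. Consequently $\gamma^k$ fixes exactly the $2p$ prong endpoints of $x$ on $\partial\cO_\psi$, and they alternate attracting (the unstable prongs) and repelling (the stable prongs).

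Now suppose $\gamma$ fixes no point of $\cO_\psi$. Then, again using atoroidality of $M$, Fenley's analysis of the action of $\pi_1(M)$ on the ideal boundary of a pseudo\nobreakdash-Anosov orbit space (\cite{Fenley-circle}, see also \cite{Fenley-regulating}) shows that $\gamma$ acts on $\partial\cO_\psi$ with exactly one attracting and one repelling fixed point, i.e.\ with North--South dynamics, and $\gamma^k$ then has the same two alternating fixed points. Finally, these two cases for $\gamma^k$ coincide with the two cases for $\gamma$: if $\gamma^k$ fixed a point of $\cO_\psi$ it would be a nonzero power of a periodic\nobreakdash-orbit class $\delta_0$ with $\delta_0$ primitive (a deck transformation fixes no point of $\mt$), so $\gamma$ would lie in the centralizer of $\delta_0^{j}$, which in a torsion\nobreakdash-free hyperbolic group is the maximal cyclic subgroup $\langle\delta_0\rangle$, forcing $\gamma$ to fix the orbit of $\delta_0$ as well. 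Thus in every case $\gamma^k$ has finitely many fixed points on $\partial\cO_\psi\cong S^1_{univ}(\cF^{ws})$ --- namely $2$, or $2p_i\le 2\max_i p_i$ --- occurring alternately as attractors and repellers, which is the assertion.

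The main obstacle is not the local prong picture or the elementary bookkeeping for $k$, but the two \emph{global} statements about $\partial\cO_\psi$: that in the periodic case there are no ideal fixed points beyond the $2p$ prong endpoints, and that in the free case the boundary dynamics is genuinely North--South. Both rest on Fenley's deep results on pseudo\nobreakdash-Anosov flows in atoroidal $3$\nobreakdash-manifolds together with the equivariant identification $S^1_{univ}(\cF^{ws})\cong\partial\cO_\psi$ coming from the regulating\nobreakdash-flow construction, and making these precise is where the real work lies. A more self\nobreakdash-contained alternative would be to argue directly on the circle $\cL^s/_{Z^s}$, using that $Z^s$ is the one\nobreakdash-step\nobreakdash-up map of \S\ref{ss.onestepup} and exploiting the lozenge combinatorics of $\gamma$ acting on $\cO_\phi$; the role of the prong number $p$ is then played by the length of the finite $\gamma$\nobreakdash-invariant chain of lozenges.
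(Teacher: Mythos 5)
Your overall strategy is the same as the paper's: pass to the regulating pseudo-Anosov flow $\psi_t$ given by Theorem \ref{teo.regulating} (via Proposition \ref{prop-skewisuniform}), take $k$ a common multiple of the prong numbers, obtain the finitely many alternately attracting/repelling fixed points of $\gamma^k$ from the prongs when $\gamma$ is $\psi$-periodic, and invoke Fenley-type results (the paper cites \cite[Lemma 8.5]{BFFP}) to get exactly two fixed points for the remaining elements; your closing reduction from $\gamma^k$ to $\gamma$ via centralizers in the torsion-free hyperbolic group $\pi_1(M)$ is fine. The genuine difference is where the circle dynamics is read off, and this is also where your sketch has its weakest link. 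You transport everything to the ideal boundary $\partial \cO_\psi$ of the orbit space of $\psi_t$ via an asserted $\pi_1(M)$-equivariant homeomorphism $S^1_{univ}(\cF^{ws}) \cong \partial\cO_\psi$, which you describe as ``essentially built into the construction''. It is not: the regulating flow is constructed from laminations of the universal circle, but identifying Fenley's ideal boundary of $\cO_\psi$ equivariantly with $S^1_{univ}(\cF^{ws})$ is a separate, nontrivial statement, and your two global claims (no ideal fixed points beyond the $2p$ prong points in the periodic case; North--South dynamics in the free case) are then assertions about $\partial\cO_\psi$ that would still need to be transferred back. The paper's sketch avoids this entirely: it works on $S^1_{univ}(\cF^{ws}) \cong \cL^s/_{Z^s}$ directly, first disposing of the elements that fix a leaf of $\wfws$ (these act with exactly one attracting and one repelling fixed point, the leaves fixed by such $\gamma$ in $\cL^s$ forming finitely many $Z^s$-orbits coming from the string of lozenges of the periodic orbit), and then, for the elements acting as translations of $\cL^s$, producing the fixed points of $\gamma^k$ concretely as the ideal endpoints, in each circle $S^1(L)$, of the rays in which the singular two-dimensional invariant foliations of $\widetilde\psi$ intersect the leaves $L$ of $\wfws$; these glue into points of the universal circle without ever mentioning $\partial\cO_\psi$. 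So the skeleton is identical, but your route adds an identification that requires its own proof, whereas the paper keeps the prong data inside the leaves of the foliation. If you keep your route, that equivariant identification (or at least an equivariant map matching fixed-point sets) is the step you must actually establish; if instead you replace it by the leafwise-intersection argument, your case analysis goes through essentially as written, with your two cases ($\psi$-periodic versus not) consistent with the paper's split ($\phi$-periodic versus not) thanks to the disjointness of the periodic spectra of $\phi_t$ and $\psi_t$ noted just before the proposition.
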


\begin{proof}[Sketch of the proof]
It is good to remark first that unless $\gamma$ corresponds to a fixed leaf (i.e. represents a free homotopy class of a periodic point) in which case $\gamma$ acts with one attracting and one repelling fixed point, the element $\gamma$ acts as a translation in $\cL^s$. Comparing with the pseudo-Anosov flow is the the way to adress these other deck transformations. 

The value of $k$ corresponds to a common multiple of the number of prongs of the singularities of the transverse regulating pseudo-Anosov flow. This way, deck transformations associated to periodic orbits will fix the points at infinity corresponding to the limits of the liftts of the rays obtained by intersecting the singular foliation with the leaves of $\widetilde{\cF^{ws}}$. For deck transformations associated to elements which are non-periodic, one can show that there are exactly two fixed points, attracting and repelling respectively (see e.g. \cite[Lemma 8.5]{BFFP}). 
\end{proof}

More discussion on the action of regulating flows, even in more general manifolds, can be found in \cite[\S 13]{FP-phpa}.  Note that there, the notion of super attracting and super repelling fixed points is introduced. This allows to give a proof that if $\phi_t: M \to M$ is a skewed-$\RR$-covered Anosov flow in a hyperbolic 3-manifold and $\psi_t: M \to M$ is the regulating pseudo-Anosov flow, then, there is no $\gamma \in \pi_1(M)$ so that it represents both periodic orbits of $\phi_t$ and $\psi_t$. One can however ask: 

\begin{quest}
Let $\phi_t^1, \ldots, \phi_t^k: M \to M$ be Anosov flows (one can restrict to skewed-$\RR$-covered Anosov flows and this is already interesting), is it true that there exists $\gamma \in \pi_1(M)$ which represents periodic orbits of \emph{all} the flows? 
\end{quest}

This could have applications in the study of transverse foliations \cite{FP-t1s,FP-Hsdff,BarbotFenleyPotrie} and is certainly very related with the problems introduced in \cite{BM,BFrM}.

\section{Examples}\label{s.examples}

There are several references which explain the construction of examples of Anosov flows. The classical ones are covered very nicely in \cite{KH, FH} and are also reviewed in \cite{Barthelme}. Surgery constructions are reviewed in \cite{Barthelme,FH} besides the original references \cite{HandelThurston,Fried,Goodman, FoH,Sa}. Constructions with building blocks are also reviewed in \cite{Barthelme,FH} besides the original references \cite{FW,BL,BBY,Paulet} and we recommend \cite{Paulet} for a general presentation making interesting interaction between constructions by surgery and building blocks. Constructions of examples by these techniques keeping control on the underlying manifold, or the resulting leaf spaces has been done in \cite{Fenley, BoM, ClayPinsky, BI, BY} and others.  Here, we will not give many details on the way examples are constructed or how they are proved  to be Anosov (which is done carefully in the mentioned references) but rather emphasize on some of the features the examples we will present have. This will allow us to have concrete instances of the results that have been surveyed above. 

One interesting exercise (which the author of this note has not done completely) is to try to understand the explicit characteristics of each example in terms of their leaf spaces, the possible actions both on those spaces and in their universal circles, etc. 

\subsection{Classical examples}

\subsubsection{Suspension flows}

Consider $A \in \mathrm{SL}_2(\ZZ)$ a hyperbolic matrix. It induces a diffeomorphism $f_A: \TT^2 \to \TT^2$ by considering $\TT^2 \cong \RR^2/_{\ZZ^2}$ and $f_A(v + \ZZ^2) = Av + \ZZ^2$ for every $v \in \RR^2$.  

We can construct $M_A = \TT^2 \times \RR/_{\sim}$ where $\sim$ is the equivalence relation generated by $(x,s+1) \sim (f_A(x), s)$ for every $(x,s) \in \TT^2 \times \RR$ and consider the flow $\varphi_t(x,s) = (x, s+1)$ which respects the equivalence relation and therefore induces a flow in $M_A$. 

It is a standard argument (see e.g. \cite{Barthelme}) to show that the flow $\varphi_t : M_A \to M_A$ is Anosov. The foliation $\cF^{ss}$ is given by the projection to $M_A$ of the lines of the form $(v  + E^s_A + \ZZ^2 , s)$ where $E^s_A$ is the eigenspace associated to the eigenvalue of modulus smaller than $1$ of $A$. 

A relevant feature of this example is that we can understand quite well the geometry and topology of the invariant foliations. When intersected with each torus $\TT^2 \times \{s\}$ the weak stable foliation is an irrational foliation. It follows that there is a \emph{global product structure} between the weak stable foliation and the strong unstable foliation; this means that for every pair of points $x,y \in \widetilde{M_A}$ (the universal cover of $M_A$) we have that $\widetilde{\cF^{ws}}(x) \cap \widetilde{\cF^{uu}}(y) \neq \emptyset$ and consists of exactly one point. 

This example is the $\RR$-covered case which is not skewed (recall Theorem \ref{teo.rcoveredboth}), here it is easy to understand the bifoliated plane, as any connected component of the lift of the transverse torus to the universal cover, intersects every orbit of the lifted flow. The weak stable and weak unstable foliations intersect the torus in two transverse linear foliations.

\subsubsection{Geodesic flows and their finite lifts} 

One can consider $\mathrm{PSL}_2(\RR)$ identified with the isometry group of $\HH^2$. If $\Gamma < \mathrm{PSL}_2(\RR)$ is a discrete and cocompact subgroup, we know that $M = \mathrm{PSL}_2(\RR)/_{\Gamma}$ is a closed 3-manifold. Existence of such subgroups can be justified by geometric considerations (for instance, the uniformization theorem for higher genus surfaces) or arithmetic ones. 

One can easily show that the action on the right of the one-parameter group $a_t = \begin{pmatrix} e^{t/2} & 0 \\ 0 & e^{-t/2} \end{pmatrix}$ is an Anosov flow on $M$. 

Here $\mathrm{PSL}_2(\RR)$ identifies with the intermediate cover $T^1\HH^2$. One can identify $T^1\HH^2$ with $\HH^2 \times \partial \HH^2$ via $(x,v) \mapsto (x, v_+)$ where $v_+ \in \partial \HH^2$ is the point at infinity which is the limit of the geodesic with initial conditions $(x,v)$. Note that with this identification, one has that the weak stable foliation lifts to $\HH^2 \times \partial \HH^2$ as the foliation $\widehat{ \cF^{ws}} = \{ \HH^2 \times \{\xi\}\}_{\xi \in \partial \HH^2}$. This allows easily to show that the foliation is $\RR$-covered. Also, since the strong unstable manifold of a point in $p \in \HH^2 \times \{\xi\}$ can be seen to intersect every leaf in a unique point (in particular, intersect $ \HH^2 \times \{\xi\}$ only at $p$) we get that in the universal cover, there is a skewed picture for the foliations. The perfect fit (and lozenges) made by different lifts of the same leaf. 

Another way to understand this is to parametrize $T^1 \HH^2$ as a pair of distinct points in $\partial \HH^2$ and a real number (called Hopf parametrization). With this parametrization, one simultaneously identifies the weak stable and weak unstable foliations as horizontal and vertical segments in $(\partial \HH^2 \times \partial \HH^2) \setminus \{(x,x) :  x \in \partial \HH^2\}$ which lifts to the bifoliated plane. 

Note that one can also consider lifts of these representations to finite covers $\mathrm{PSL}_2^{(k)}(\RR)$ which exist in many cases and provide finite covers of the geodesic flow. These examples have some relevance in the study of general classification of Anosov flows on Seifert manifolds \cite{BarbotFenley3}, and also in  construction of hyperbolic manifolds admitting many distinct Anosov flows \cite{BoM}.

\subsection{Transverse tori}

The first examples of non-suspension Anosov flows which have a torus transverse to the flow (recall Proposition \ref{prop-transversetorus}) were produced by Franks and Williams \cite{FW} giving the first example of non-transitive Anosov systems. The non-transitivity was indeed quite useful to ensure the fact that the flow was Anosov for reasons that we shall explain. After, that, in \cite{BL} a transitive example was produced and more recently, a very general technique was devised in \cite{BBY} which allows to glue several pieces and construct a myriad of different examples (all of which admit transverse tori and are not suspensions). Recently, in \cite{Paulet} this technique (as well as others) were further pushed to the strongest current version, somewhat subsuming both the constructions from \cite{BBY, BL} on the one hand and \cite{HandelThurston} on the other. We will make a very short cartoon of this, and refer the reader to \cite{Paulet} for details and more constructions. (The constructions are also surveyed in \cite{Barthelme,FH}.) 

In sum, one has the following data: 

\begin{itemize}
\item a \emph{block} is a compact 3-manifold $B$ with boundary $\partial B = \partial_{out} B \cup \partial_{in} B$ which is a finite union of tori and contains a vector field $X$ which is pointing outwards in every component of $\partial_{out} B$, pointing in at every component of $\partial_{in}B$ and so that the maximal invariant set $\Lambda$ in $B$ (that is, the set $\Lambda = \{x \in B \ : \ X_t(x) \in B \setminus \partial B \ \forall t \in \RR \}$) is a hyperbolic set. 

\item a pattern in the boundary is, on each component $T$ of $\partial_{out} B$ the lamination $\Gamma^u_T$ obtained as the intersection of the weak unstable lamination $\cF^{wu}(\Lambda)$ of $\Lambda$ with $T$, and on each component $T'$ of $\partial_{in}B$ the lamination $\Gamma^s_{T'}$ obtained by intersecting the weak stable lamination $\cF^{ws}(\Lambda)$ of $\Lambda$ with $T'$. 
\end{itemize}

A \emph{piece} is then a 6-tuple $(B,X, \partial_{out}B, \partial_{in} B, \Gamma^s, \Gamma^u)$ where $(B, X, \partial_{out}B, \partial_{in}B)$ is a block and $\Gamma^s$ and $\Gamma^u$ are a pattern in the boundary components. 

It is shown (\cite{FW,BL,BBY,Paulet}) in different types of generalities (with some conditions on the laminations and the patterns) that if one has a family of pieces, and a way of gluing the outward boundaries with inward boundaries that this produces an Anosov flow. 

Note that as explained above Proposition \ref{prop-transversetorus} implies that these examples cannot be $\RR$-covered. It is relevant to try to understand how the torus is represented in the orbit space $\cO_\phi$ of the resulting flow. Note that since it is transverse to the flow, when lifted to the universal cover the transverse torus gives several planes which are transverse to the flow. However, there are several orbits that intersect these planes many times, so, some deck translates of the plane intersect the plane, but each representation of the plane has a $\ZZ^2$ which stabilize it. It is usually the case that many lozenges can be found in these tori. 

The tori allow one to describe part of the orbit space and the bifoliated plane, but as far as I know, we still have space to understand this plane a bit better.

\subsection{Surgeries} 

Here we briefly describe the surgery construction due to Goodman \cite{Goodman}, which admits a dfferent interpretation due to Fried \cite{Fried} (but provides the same examples, thanks to \cite{Shannon}). 

The idea is to start with an Anosov flow, pick some periodic orbit $o$, blow it up, and then blow it down with some operation that will change the topology of the manifold, but will keep the same flow in $M \setminus o$. This operation requires a periodic orbit $o$, and some integer $m \in \ZZ$ which determines the way the blow down is made. In \cite{FoH,Sa} it is shown that in some cases, the operation can be done by keeping the fact that the flow is the Reeb flow for some contact form. It was the way to construct the first \cite{Goodman}, and all the currently known (see for instance \cite{Fenley,BoM,BI,Fenley-QG}), examples of Anosov flows in hyperbolic 3-manifolds. 

In \cite{Fenley,BI,ABM} interesting properties of the effect of surgeries in the leaf spaces are considered. In particular, related to the problem of understanding the \emph{graph} of Dehn-Goodman-Fried surgeries \cite{De,DeSh} it is shown that every Anosov flow is connected via some surgery to some $\RR$-covered one (\cite{ABM}).

\section{Contact Anosov flows and leaf spaces}\label{s.contact}

Here we will explain a beautiful result due to Barbot \cite{Barbot-tcp} saying that contact Anosov flows must be $\RR$-covered. This has been recently complemented in \cite{Marty} where the converse has been proved: if an Anosov flow is $\RR$-covered, then, it is orbit equivalent to a contact Anosov flow\footnote{Note that being contact is sensible to reparametrizations, so, one cannot expect more than this.}. 

We note here that the result of Marty \cite{Marty}, stating that every skewed-$\RR$-covered Anosov flow is orbit equivalent to a contact Anosov flow, complemented with some recent result of Barthelm\'e, Bowden and Mann \cite{BM}, implies finiteness of $\RR$-covered Anosov flows in a given 3-manifold. This answers an important case of a quite relevant and still open problem: 

\begin{quest}
Is there a $3$-manifold admitting infinitely many non orbit equivalent Anosov flows?
\end{quest}

We state then this important result. 

\begin{teo}[Barbot-Marty]\label{teo-BarbotMarty}
Let $\phi_t: M \to M$ be a an Anosov flow in a closed 3-manifold. Then, it is skewed-$\RR$-covered if and only if it is orbit equivalent to a contact Anosov flow. 
\end{teo}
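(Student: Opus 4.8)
The plan is to prove the two implications separately, with the forward implication (contact $\Rightarrow$ skewed-$\RR$-covered) following Barbot \cite{Barbot-tcp} and the converse (skewed-$\RR$-covered $\Rightarrow$ orbit equivalent to contact) following Marty \cite{Marty}. For the forward direction, suppose $\phi_t$ is generated by a Reeb vector field $X$ of a contact form $\lambda$, so $\lambda(X)=1$ and $d\lambda$ is non-degenerate on $\ker\lambda$. The key observation is that $\ker\lambda = E^s\oplus E^u$ and that $d\lambda$ pairs $E^s$ with $E^u$ non-degenerately, giving a symplectic structure on the weak bundles that is preserved up to scaling by the flow. The first step is to exhibit a closed $1$-form or a well-chosen function detecting the relative position of the weak stable and weak unstable leaves: the contact condition forces an asymmetry between forward and backward asymptotics that rules out the product (suspension) case. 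Concretely, one shows that no leaf of $\widetilde{\cF^{ws}}$ can intersect every leaf of $\widetilde{\cF^{wu}}$ --- equivalently, that there are always perfect fits --- so by Theorem \ref{teo.rcoveredboth} and the dichotomy following it, once one establishes $\RR$-coveredness, the flow must be skewed rather than a suspension. To get $\RR$-coveredness itself, I would argue by contradiction using Theorem \ref{teobranching}: branching leaves would produce, via the transverse contact geometry, a configuration violating the volume-preserving / symplectic nature of the holonomy (the contact volume form $\lambda\wedge d\lambda$ is $\phi_t$-invariant, so the flow preserves a volume, and one leverages this rigidity against the existence of non-separated leaves).

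For the converse, assume $\phi_t$ is skewed-$\RR$-covered. By Proposition \ref{prop-skewisuniform} the weak foliations are uniform $\RR$-covered, so Theorem \ref{teo.regulating} provides a regulating (pseudo-)Anosov flow transverse to $\cF^{ws}$; more importantly, the bifoliated plane is the band $\cB$ with horizontal/vertical foliations and every point is the corner of a lozenge, with the one-step-up map $\eta$ from \S\ref{ss.onestepup} commuting with the $\pi_1(M)$-action. The strategy is to build a contact form directly from this structure: one uses the two transverse foliations $\cF^{ws},\cF^{wu}$ to produce a \emph{bi-contact structure} (a pair of transverse contact structures, one positive and one negative, whose intersection is $\RR X$), following the Mitsumatsu correspondence, and then shows that in the skewed case the projectively-Anosov flow underlying this bi-contact pair can be made genuinely Anosov and genuinely contact after a reparametrization. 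The skewed condition is exactly what guarantees that the angle function between the two plane fields can be chosen monotone along the flow, which is the analytic input needed to upgrade projective hyperbolicity to a true contact Reeb flow. Then Theorem \ref{teo-bifolclas} ensures that the contact Anosov flow thus constructed, having the same bifoliated plane with the same $\pi_1(M)$-action, is orbit equivalent to $\phi_t$.

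The main obstacle I expect is in the converse direction: producing an \emph{honest} contact form rather than merely a bi-contact (projectively Anosov) structure. It is relatively soft to get a pair of transverse contact structures from two transverse foliations, but showing that the resulting dynamics can be reparametrized to be Reeb for a genuine contact form --- equivalently, that the flow is \emph{tight} in the appropriate sense and the contact volume can be normalized --- requires genuinely using the $\RR$-covered (indeed skewed) hypothesis in an essential way, not just the transversality. Marty's argument does this by an explicit construction adapted to the band model and the one-step-up symmetry, carefully controlling the contact condition leaf by leaf using the quasi-geodesic fan structure of Theorem \ref{teo-insidestructure}; the delicate point is the global matching of these local contact forms into a single smooth $\lambda$ with $\lambda\wedge d\lambda>0$ everywhere. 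On the forward side, the subtlety is that contactness is not invariant under reparametrization, so one must be careful to extract only reparametrization-invariant consequences (such as $\RR$-coveredness of the weak foliations), which is why the statement is phrased up to orbit equivalence.
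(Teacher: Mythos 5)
Your forward implication (contact $\Rightarrow$ skewed-$\RR$-covered) has a genuine gap at its central step. The mechanism you propose for excluding branching leaves --- that the invariant volume $\lambda\wedge d\lambda$ provides a ``rigidity'' incompatible with non-separated leaves --- is not an argument and would not become one: an invariant volume is far too weak to detect branching. Being $\RR$-covered is an invariant of orbit equivalence while volume preservation is not, and there are many transitive non-$\RR$-covered Anosov flows (e.g.\ \cite{BL,BBY}); nothing about a smooth invariant volume, by itself, interacts with Theorem \ref{teobranching} or with non-Hausdorffness of $\cL^s$, and your sketch offers no bridge. What Barbot \cite{Barbot-tcp} actually extracts from the contact hypothesis is much more specific, and it concerns the \emph{strong} foliations, which your sketch never touches: the contact property is used to prove that the leaf space of the lifted strong stable foliation in $\widetilde M$ is Hausdorff (hence a plane), and one then shows this is incompatible with the pair of adjacent lozenges that every non-$\RR$-covered flow possesses (Theorem \ref{teo-adjacentloz}). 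Without that step (or a substitute), $\RR$-coveredness is not established. The easier exclusion of the suspension case is fine in spirit, though your ``asymmetry of asymptotics'' phrasing can be replaced by a clean Stokes argument: a Reeb flow admits no closed transverse surface, whereas a suspension has a transverse torus, so an $\RR$-covered contact flow must be skewed by the dichotomy following Theorem \ref{teo.rcoveredboth}.

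For the converse (skewed $\Rightarrow$ orbit equivalent to contact), you defer to Marty, which is also what the paper does (it only points to \cite{Marty,Marty2} and proves nothing in this direction), so at the level of detail there is no real divergence to compare. But be aware that the route you describe --- Mitsumatsu's bi-contact correspondence \cite{Mitsumatsu} followed by an ``upgrade'' of a projectively Anosov pair to a genuine Reeb--Anosov flow via a monotone angle function --- is your own reconstruction rather than an argument: the soft part (producing a bi-contact pair) uses nothing about skewedness, so the entire weight of the hypothesis is carried by precisely the step you flag as the main obstacle, which is asserted rather than proved; and the concluding appeal to Theorem \ref{teo-bifolclas} only applies if your construction is shown to produce a flow on the same manifold with the same equivariant bifoliated plane, which is again exactly what needs proof. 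As written, neither implication of the theorem is actually established by the proposal.
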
 

We briefly explain the idea of Barbot's proof of the converse implication \cite{Barbot-tcp}. The proof divides in two steps. The first part is to use the contact property to deduce that the strong stable leaf space in $\widetilde{M}$ is Hausdorff (and therefore a plane). Then, using this and the existence of adjacent lozenges (which characterize non-$\RR$-covered Anosov flows, Theorem \ref{teo-adjacentloz}) deduce that a non-$\RR$-covered Anosov flow cannot be contact. Since the orbit space of the Anosov flow can be $\RR^2$ even when the weak foliations leaf spaces are non-Hausdorff, one can naively ask: 

\begin{quest}
Does every Anosov flow in dimension 3 have the property that the strong stable leaf space is homeomorphic to $\RR^2$? 
\end{quest}

As pointed out to me by Katie Mann, Chi Cheuk Tsang and Bojun Zhao after these notes where available, it is very unlikely that the question above has a positive answer (in fact, they have a convincing argument that the leaf space of the strong stable is never $\RR^2$ unless the Anosov flow is $\RR$-covered, using similar arguments as in \cite{Barbot-tcp}). In fact, they suggested (and it seems reasonable) that the leaf space could be homeomorphic to the weak stable leaf space times $\RR$. This seems to interact interestingly with Question \ref{quest3}. Note also that flows obtained by parametrizing the strong stable foliation of an Anosov flow are one of the few sources of minimal flows in 3-manifolds (see the introduction of \cite{HandelThurston}), which is the reason I believe understanding the leaf space of such foliations is relevant if one wishes to better understand the possible minimal flows in dimension 3. 

In \cite{Marty} a direct, and more conceptual proof of the converse implication in Theorem \ref{teo-BarbotMarty} was obtained, but the proof is hidden inside that paper. In the note \cite{Marty2} that will appear in this proceedings, the proof is explained in detail.

\section{Other directions}\label{s.otherthings}

This text is very introductory and only mentions briefly some recent advances. Moreover, it is quite focused in the topology and geometry of Anosov flows. We have thus neglected several lines of research related to Anosov flows and failed to mention several recent results even in the chosen focus. I will mention just a few, based purely in personal taste. 

In particular, we barely mentioned many of the most recent related to the further study of the orbit spaces and the bifoliated plane, which build on the basic results we have discussed. See \cite{BFrM, BFeM, BBM} for introductions to these beautiful developments (we point out here that these developments also include the study of pseudo-Anosov flows which are fascinating object that we touched upon only very briefly here).  There is some relation with a deep theory developed by Barbot and Fenley (see \cite{BarbotFenley,BarbotFenley2} and references therein) which produces somehow a JSJ decomposition for Anosov flows in a series of works which are described also in \cite{Barthelme} (together with some consequences explored in particular by Barthelme and Fenley). I find the study, initiated in \cite{BG}, of what is called \emph{self orbit equivalences} of Anosov flows quite fascinating, and has been connected closely to the theory of partially hyperbolic diffeomorphisms (see \cite{HP,BFP}).

We briefly mentioned at the end the problem of understanding minimal flows on 3-manifolds and its relation with Anosov flows that was first (as far as I am aware) pointed out in \cite{HandelThurston}. The general problem of understanding flows and homeomorphisms of 3-manifolds is wide open but we predict that understanding Anosov flows can provide insight in the problem both as examples and as models to understand their behavior. In \cite{FP-t1s, FP-Hsdff, BarbotFenleyPotrie} we started a systematic study of the flows that arise as the intersection of two transverse foliations and we have spotted several links to Anosov flows. In this conference, we have attended to some exiting developments on the relation between Anosov flows and bi-contact structures, triggered by work of Mitsumatsu \cite{Mitsumatsu} and more recently Hoozori \cite{Hoozori}. This has motivated some new exiting results that should be payed attention to (see for instance \cite{CLMM,Massoni} and references therein).  The study of bi-contact structures and flows arising in the intersection of transverse foliations is certainly connected, but the connection is still not very clearly understood.

In hyperbolic 3-manifolds, besides the construction of manifolds admitting many distinct Anosov flows in \cite{BoM,BY}, it makes sense to ask about the properties of the extension of orbits to the sphere at infinity and its relation with the circle at infinity of each leaf of the weak foliations. Recently, Fenley \cite{Fenley-QG} has shown that \emph{every} non-$\RR$-covered Anosov flow in a hyperbolic 3-manifold is quasi-geodesic (completing the characterization of quasi-geodesic flows thanks to \cite{Fenley}) which is surprising non only for its beauty but because no examples of Anosov quasi-geodesic flow in a hyperbolic 3-manifold was known and because it exposes a deep relation between the leaf space of an Anosov flow and its geometric properties. The study of pseudo-Anosov flows in hyperbolic 3-manifolds and its relation to taut foliations is also very active and hard to describe with a few references, but let us point to \cite{LMT} for a recent nice result with several references to the literature. 

There is also an influential conjecture due to Ghys relating Dehn-Goodman-Fried surgery that has not been touched upon. We refer the reader to \cite{De,DeSh} for an introduction to that problem and to the theory of Birkhoff sections which are also quite related to the concept of \emph{open book decompositions}.

But also, I want to point out other work that relates to Anosov flows from more dynamical or analytic viewpoints, for this, I recommend the recent work on exponential mixing \cite{TsujiiZhang} and the recent advances on microlocal techniques to understand Anosov flows and which have deep connections with topology (related to helicity and linking number) which would be nice to understand further, we refer the reader to \cite{CePat} for a recent introduction pointing also to other relevant references.


\end{document}